\documentclass{article}
\usepackage{amssymb,amsmath,amsthm,centernot,color,dsfont,enumitem,float,geometry,graphicx,hyperref,mathrsfs,mathtools,pdfpages,stackengine,xparse}
\usepackage[x11names,dvipsnames]{xcolor}
\usepackage{tikz-cd}
\usepackage{pgfplots}
\usepackage{tikz,circuitikz}

\stackMath

\newcommand*{\mint}[1]{%
	\mint@l{#1}{}%
}

\newcommand{\dist}{\operatorname{dist}}

\newcommand{\Hess}{\operatorname{Hess}\,}
\newcommand{\II}{\mathrm{I\!I}}

\newcommand{\R}{\mathbb{R}}

\newcommand{\Ric}{\operatorname{Ric}}

\newcommand{\Span}{\operatorname{span}}

\newcommand{\tr}{\operatorname{tr}}

\newcommand{\vol}{\operatorname{vol}}
\newcommand{\Z}{\mathbb{Z}}

\newtheoremstyle{defn}
{\topsep}   
{\topsep}   
{\color{black}}  
{}          
{\bfseries} 
{.}         
{\newline}         
{}          

\newtheoremstyle{green}
{\topsep}   
{\topsep}   
{\itshape\color{black}}  
{}          
{\bfseries} 
{.}         
{\newline}         
{}          

\newtheoremstyle{black}
{\topsep}   
{\topsep}   
{\color{black}}  
{}          
{\bfseries} 
{.}         
{\newline}         
{}

\theoremstyle{green}
\newtheorem{thm}{Theorem}[section]
\newtheorem{lem}[thm]{Lemma}
\newtheorem{rmk}[thm]{Remark}

\newtheorem{prop}[thm]{Proposition}
\newtheorem{coro}[thm]{Corollary}
\theoremstyle{defn}
\newtheorem{defn}[thm]{Definition}
\theoremstyle{black}

\newtheoremstyle{named}{}{}{\itshape}{}{\bfseries}{.}{.5em}{\thmnote#1{ #3}}
\theoremstyle{named}

\DeclareDocumentCommand{\newfaktor}{m O{0.5} m O{-0.5}}{
	\raisebox{#2\height}{\ensuremath{#1}}
	\mathrel{\scalebox{1.5}{$\diagup$}}
	\raisebox{#4\height}{\ensuremath{#3}}
}

\allowdisplaybreaks
\everymath{\displaystyle}

\newcommand{\Ctilde}{\widetilde{C}}
\newcommand{\gammadot}{\dot{\gamma}}
\newcommand{\gammatilde}{\widetilde{\gamma}}
\newcommand{\gtilde}{\tilde{g}}
\renewcommand{\hbar}{\overline{h}}
\newcommand{\IItilde}{\tilde{\II}}
\newcommand{\Kbar}{\overline{K}}
\newcommand{\Lambdatilde}{\widetilde{\Lambda}}
\newcommand{\LeeWeiRic}{\operatorname{LeWeRic}}
\newcommand{\LinWangRic}{\operatorname{LiWaRic}}

\newcommand{\nablatilde}{\widetilde{\nabla}}
\newcommand{\sigmadot}{\dot{\sigma}}
\newcommand{\umax}{u_{\max}}
\newcommand{\umin}{u_{\min}}

\newcommand{\wexp}{\widetilde{\exp}}
\newcommand{\wRic}{\overline{\Ric}}
\newcommand{\wsec}{\overline{\sec}}

\newtheorem*{soul*}{Soul Theorem}
\newtheorem*{split*}{Splitting Theorem}

\title{Positive Weighted Curvatures on Compact Manifolds with Weighted Convex Boundary and Proper Manifolds}
\author{Ruifeng Xu\footnote{The author's name is also known as Ralph Xu.}}
\begin{document}
\maketitle
\begin{abstract}
Given a Riemannian manifold $(M,g)$ equipped with a positive density function $f = e^{-\varphi}$, we study the weighted sectional curvature $\overline{\sec}_\varphi(U,V)$ and the $k$-th intermediate weighted Ricci curvature $\overline{\text{Ric}}_{k,\varphi}(U,V)$. We show that $\overline{\sec}_\varphi(U,V)$ and $\overline{\text{Ric}}_{k,\varphi}(U,V)$, together with the weighted second fundamental form $\widetilde{\mathrm{I\!I}}$, play central roles in controlling the topology of the manifold. Using the theory of $\widetilde{C}(k)$ functions, variational methods, and Morse theory, we prove that by imposing suitable restrictions on these quantities, an $n$-dimensional compact manifold with boundary or a proper open $n$-dimensional manifold is diffeomorphic to an $n$-ball or to a certain CW complex. This shows that such rigidity phenomena is not confined in the unweighted case, but persist in the weighted setting as well. Lastly, we prove a weighted analogue of Frankel's Theorem under positive intermediate weighted Ricci curvature. These results extend classical theorems from the unweighted setting to the weighted case, demonstrating that $\overline{\sec}_\varphi(U,V)$ and $\overline{\text{Ric}}_{k,\varphi}(U,V)$ provide effective tools in Riemannian geometry.
\end{abstract}

\section{Introduction}
In this paper, we study weighted sectional curvature and $k$-th weighted intermediate Ricci curvature, both of which are defined on a Riemannian manifold $(M,g)$ with a positive real-valued density function $f = e^{-\varphi}$. We call the triple $(M, g, e^{-\varphi})$ a \textit{weighted manifold}.
\subsection{Weighted Sectional Curvature}
The notion of weighted sectional curvature was introduced by Wylie in \cite{Wylie} in 2015. Let $X$ be a vector field defined on $M$. For orthonormal vectors $u$ and $v$, the \textit{weighted sectional curvature} is defined by
$$\wsec_X(u,v) = \sec(u,v) + \dfrac{1}{2}L_X g(v,v) + g(X,v)^2,$$
where $L$ is the Lie derivative with respect to $X$. On weighted manifolds, we often consider the case where $X$ is the gradient of a function $X = \nabla \varphi$, and we denote
$$\wsec_\varphi(u,v) = \sec(u,v) + \Hess \varphi(v,v) + d\varphi(v)^2.$$

Unlike the classical sectional curvature determined by the metric $g$, this quantity is not symmetric in its arguments. In particular,
$$\wsec_\varphi(u,v) \ne \wsec_\varphi(v,u).$$
This notion differs from the sectional curvature associated with the conformal metric $\gtilde = e^{-2\varphi}g$. Moreover, the relationship between the weighted sectional curvatures from $g$ and $\gtilde$ is described in \cite[Proposition 2.1.]{Wylie} as
$$\wsec_\varphi^g(u,v) = e^{-2\varphi}\wsec_{-\varphi}^{\gtilde}(v,u).$$

In \cite{Wylie}, several fundamental results in Riemannian geometry were extended to the weighted setting. In particular, Wylie established a second variation formula of energy involving $\wsec_\varphi$ in place of the classical sectional curvature. The proof of it can be generalized into the second variation formula of arc length using weighted curvatures, which plays a key role in the arguments of this paper.

A geometric meaning for the weighted sectional curvature comes from considering a certain affine connection associated to the weighted manifold $(M, g, e^{-\varphi})$. For any $1$-form $\alpha$, we can define $\nabla^\alpha_X Y = \nabla_X Y - \alpha(X)Y - \alpha(Y)X$, where $\nabla$ is the Levi Civita connection of the metric $g$. In the case of $\alpha = d\varphi$, we denote $\nabla^\alpha = \nabla^\varphi$, so we have
$$\nabla^\varphi_X Y = \nabla_X Y - d\varphi(X)Y - d\varphi(Y)X,$$
which, by \cite[Proposition 3.2.]{WylieYeroshkin}, is the unique torsion-free connection making $e^{-(n+1)\varphi}\vol_g$ parallel while sharing the same geodesics as $\nabla$, up to reparametrization.

It is important to note that using any connection $\bar{\nabla}$, we can define a $(1,3)$ curvature tensor
\begin{equation}\label{RiemannCurvatureTensorDefnFrom2ndDerivatives}
	R^{\bar{\nabla}}(X,Y)Z = \bar{\nabla}_X\bar{\nabla}_Y Z - \bar{\nabla}_Y\bar{\nabla}_X Z - \bar{\nabla}_{[X,Y]} Z
\end{equation}
and a $(0,2)$ Ricci tensor $\Ric^{\bar{\nabla}}(Y,Z) = \tr(X \mapsto R^{\bar{\nabla}}(X,Y)Z)$. We will observe later that the weighted sectional curvature and $(1-n)$-Bakry-Émery Ricci curvature are natural curvature quantities of the connection $\nabla^\varphi$.

Specifically, the following $(1,3)$-tensor, the weighted Riemannian curvature tensor
\begin{equation}\label{WeightedRiemannianCurvatureTensor}
	R^\varphi(X,Y)Z = \nabla^\varphi_X \nabla^\varphi_Y Z - \nabla^\varphi_Y \nabla^\varphi_X Z - \nabla^\varphi_{[X,Y]} Z
\end{equation}
can be used to recover the weighted sectional curvature via
$$g(R^\varphi(u,v)v,u) = \wsec_\varphi(u,v),$$
for orthonormal vectors $u$ and $v$ (see \cite{WylieYeroshkin}).

Weighted sectional curvature is closely related to the $N$-Bakry-Émery Ricci curvature, which has been extensively studied due to its connections with Ricci flow and Perelman’s proof of the Poincaré conjecture. For $N \in \R$ and a vector field $X$, we define the $N$-Bakry-Émery Ricci tensor by
\begin{equation}
	\Ric^N_X = \Ric + \dfrac{1}{2}L_Xg - \frac{X^\# \otimes X^\#}{N}. \label{Ric^N_X Defn}
\end{equation}
Similarly, in the gradient case, where $X = \nabla \varphi$, this reduces to
\begin{equation}
	\Ric^N_\varphi = \Ric + \Hess \varphi - \frac{d\varphi \otimes d\varphi}{N}. \label{Ric^N_varphi Defn}
\end{equation}

The $N$-Bakry-Émery Ricci curvature tensor is a way to generalize the Ricci curvature tensor. Indeed, they are equal when the function $\varphi$ is constant in (\ref{Ric^N_varphi Defn}), or equivalently the vector field $X = 0$ in (\ref{Ric^N_X Defn}). Initially, the Bakry-Émery tensor originates in the work of Lichnerowicz in \cite{Lichnerowicz1} and \cite{Lichnerowicz2}, and it was later developed further by Bakry and Émery in \cite{BakryEmery}. Perelman and Hamilton also discuss it in their papers on Ricci flow in \cite{Hamilton} and \cite{Perelman}. It also shows up in the study of metric measure spaces in \cite{LottVillani}, \cite{Sturm1}, and \cite{Sturm2}. There are too many recent papers on this topic to reference, so we only cite some such as \cite{Lott}, \cite{Morgan1}, \cite{Morgan2}, \cite{MunteanuWang}, \cite{WylieWei}, \cite{Wylie2}, \cite{Wylie3}, \cite{WylieYeroshkin}, and \cite{WoolgarWylie}.

This definition also leads naturally to the notion of $N$-quasi-Einstein manifolds, which are Riemannian manifolds satisfying
$$\Ric_X^N = \lambda g,$$
for some $\lambda \in \R$. Such manifolds play an important role in conformal geometry and in certain models arising in mathematical physics, including general relativity. They have been studied in recent work by Cochran \cite{Cochran}, Valiyakath \cite{Valiyakath}, Lim \cite{Lim}, and many others. When $N = 2$, the equation also arises in near horizon geometry, which is pointed out by \cite{KhuriWoolgar}.

Recall that in the classical setting,
$$\Ric(v,v) = \sum_{i=1}^{n-1} \sec(e_i,v),$$
where $v \in T_pM$ is a unit vector and $\{v, e_1, ..., e_{n-1}\}$ is an orthonormal basis of $T_pM$. Mimicking this construction, one can sum weighted sectional curvatures over a basis. This yields the $(1-n)$-Bakry-Émery Ricci curvature up to a constant re-scaling of the weight function:
\begin{align*}
	\sum_{i = 1}^{n-1} \wsec_\varphi(e_i, v) &= \sum_{i = 1}^{n-1} \sec(e_i, v) + \sum_{i = 1}^{n-1} \Hess \varphi(v,v) + \sum_{i = 1}^{n-1} d\varphi(v)^2\\
	&= \Ric(v,v) + (n-1)\Hess \varphi(v,v) + (n-1)d\varphi(v)^2\\
	&= \Ric(v,v) + \Hess ((n-1)\varphi) (v,v) - \dfrac{d((n-1)\varphi)(v)^2}{1-n}\\
	&= \Ric_{(n-1)\varphi}^{1-n}(v,v),
\end{align*}
where $\{v, e_1, ..., e_{n-1}\}$ are as above. Equivalently, starting with the density function $\dfrac{\varphi}{n-1}$, we obtain
\begin{align*}
	\sum_{i = 1}^{n-1} \wsec_{\frac{\varphi}{n-1}}(e_i, v) &= \sum_{i = 1}^{n-1} \sec(e_i, v) + \sum_{i = 1}^{n-1} \Hess \left(\dfrac{\varphi}{n-1}\right)(v,v) + \sum_{i = 1}^{n-1} d\left(\dfrac{\varphi}{n-1}\right)(v)^2\\
	&= \Ric(v,v) + \Hess \varphi(v,v) - \dfrac{d\varphi(v)^2}{1-n}\\
	&= \Ric_{\varphi}^{1-n}(v,v).
\end{align*}

\subsection{Curvature and Topology}
It is well known that curvature conditions impose strong restrictions on the topology of a manifold. Classical results illustrate this interplay clearly. For instance, the quarter-pinched sphere theorem characterizes manifolds with sufficiently pinched sectional curvature as spheres, while the Bonnet–Myers theorem yields bounds on the diameter and finiteness of the fundamental group under positive Ricci curvature. Such results have been extensively developed for curvature tensors arising from the Levi-Civita connection, but much less is understood about the extent to which curvature quantities associated with non-Levi-Civita connections influence topology. One of the goals of this paper is to investigate this question.

Another fundamental example relating curvature and topology is the Soul Theorem of Cheeger and Gromoll, which states:
\begin{soul*}[(\cite{CheegerGromoll})]
	If $(M,g)$ is a complete noncompact Riemannian manifold with nonnegative sectional curvature, then it contains a closed totally convex submanifold $S \subseteq M$ such that $M$ is diffeomorphic to the normal bundle over $S$.\\
	If $\sec > 0$, then $S$ is a single point, which implies $M$ is diffeomorphic to the Euclidean space.
\end{soul*}

The submanifold $S$ is called the \textit{soul} of $M$. Similarly, Cheeger and Gromoll's Splitting Theorem is also a significant result:
\begin{split*}[(\cite{CheegerGromoll2})]
	If $(M,g)$ contains a line and has $\Ric \ge 0$, then $(M,g)$ is isometric to a product $(H \times \R, g_0 + dt^2)$.
\end{split*}
The generalization of the splitting theorem into weighted settings has been done by many authors. Here, we present a summary of different versions of the splitting theorems, which can also be found in \cite{Lim2}:

\begin{center}
	\underline{If $\Ric_X^N \ge 0$, then:}
	\begin{align*}
		&N>n &\implies &&\text{Splitting Theorem}, \cite{FangLiZhang}, \cite{KhuriWoolgarWylie}\\
		&N<1, X=\nabla\varphi,\varphi<K &\implies &&\text{Splitting Theorem}, \cite{WoolgarWylie}\\
		&N<\infty, X=\nabla\varphi,\varphi<K &\implies &&\text{Splitting Theorem}, \cite{FangLiZhang}\\
		&N=1, X=\nabla\varphi,\varphi<K &\implies &&\text{Warped Product Splitting}, \cite{WoolgarWylie}\\
		&N=\infty, X=\nabla\varphi, \nabla\varphi \to 0 \text{ at } \infty &\implies &&\text{Splitting Theorem}, \cite{FangLiZhang}
	\end{align*}
\end{center}

Although, similar to the Splitting Theorem, many other theorems in classical Riemannian geometry have also been generalized into the weighted setting (see \cite{Wylie}) successfully, the generalization of the Soul Theorem in the weighted setting is still an open question.
%
%
\subsection{Weighted $k$-th Intermediate Ricci Curvature}
Intermediate Ricci curvature arises naturally from the notions of sectional curvature and Ricci curvature, and it provides a quantity that interpolates between the two. Fix a positive integer $k \le n-1$. The \textit{intermediate $k$-th Ricci curvature of $M$} is said to satisfy $\Ric_k(M) > H$ (respectively $\Ric_k(M) < H$) if
$$\sum_{i=1}^k g\left(R(e_i, e_0)e_0, e_i\right) = \sum_{i=1}^k \sec(e_i,e_0) > H \quad (\text{respectively, } < H),$$
for all orthonormal sets $\{e_0, ..., e_k\}$ consisting of $k+1$ vectors.

The study of intermediate Ricci curvature has attracted considerable attention. Wilhelm showed in \cite{Wilhelm} that Synge’s theorem remains valid for complete manifolds satisfying $\Ric_k(M) \ge n-1$, provided the first systole is sufficiently large. More recently, El-Hasan, Phelan, and Wilhelm demonstrated in \cite{El-HasanPhelanWilhelm} that positive $\Ric_k$ is preserved under Riemannian submersions whenever $k \le b-1$, where $b$ denotes the dimension of the codomain. There are also lots of exciting results and discoveries by Mouillé, which we will only reference \cite{Mouille2}, \cite{DominguezVazquezGonzalezAlvaroMouille}, \cite{KennardMouille}, \cite{KhaliliSamaniMouille}, and \cite{KennardMouilleNienhaus}.

In light of the successes in extending notions of section and Ricci curvatures to the weighted setting, in this paper we define a new notion of weighted intermediate Ricci curvature by building on the definition of $\Ric_k(M)$. Fix a positive integer $k \le n-1$. Let $\{v,e_1,...,e_k\}$ be an orthonormal set. The \textit{intermediate weighted $k$-th Ricci curvature of $M$} is said to satisfy $\wRic_{k,\varphi}(M) > H$ (respectively $\wRic_{k,\varphi}(M) < H$) if
$$\sum_{i=1}^k g\left(R^\varphi(e_i, v)v, e_i\right) = \sum_{i=1}^k \overline{\sec}_\varphi(e_i,v) > H \quad (\text{respectively, } < H),$$
for all such orthonormal sets.

Recently, weighted intermediate Ricci curvature has also appeared in \cite{LeeWei} and \cite{LinWang}. Let $P \subseteq M$ be the $k$-dimensional plane spanned by the orthonormal set $\{e_i\}_{i=1}^k$, and suppose that $v \perp P$. Then the (unweighted) $k$-th intermediate Ricci curvature on $P$ is defined as
\begin{equation}
	\Ric_k(v,P) = \sum_{i=1}^{k} \sec(e_i,v).
\end{equation}
Let $\LeeWeiRic_{k,\varphi}^1(v,P)$ denote the $k$-th weighted intermediate Ricci curvature defined in \cite{LeeWei}. Then
\begin{equation}\label{LeeWeiDefn}
	\LeeWeiRic_{k,\varphi}^1(v,P) = \Ric_k(v,P) + \dfrac{k}{n-1}\Hess\varphi(v,v) + \dfrac{k}{(n-1)^2} d\varphi(v)^2.
\end{equation}
Let $\LinWangRic_{k,\varphi}^N(v,P)$ denote the $k$-th intermediate $N$-Bakry--Émery Ricci curvature defined in \cite{LinWang}. Then
\begin{equation}\label{LinWangRicDefn}
	\LinWangRic_{k,\varphi}^N(v,P) = \Ric_k(v,P) + \Hess\varphi(v,v) - \dfrac{1}{N-k} d\varphi(v)^2.
\end{equation}

To better compare our definition with the above definitions, we can also rephrase our definition of a lower (or upper) bound for $\wRic_{k,\varphi}$ using $k$-dimensional planes. Specifically, we can define
\begin{equation}\label{wRicDefnUsingPlanes}
	\wRic_{k,\varphi}(v,P) = \Ric_k(v,P) + k\Hess\varphi(v,v) + k d\varphi(v)^2.
\end{equation}
Then our definition of $\wRic_{k,\varphi}(M) > H$ is equivalent to $\wRic_{k,\varphi}(v,P) > H$ for all $k$-dimensional planes $P$ and all $v \perp P$. An upper bound is defined similarly.

Using \eqref{wRicDefnUsingPlanes}, we can see that our definition and the definition in \cite{LeeWei} are related by
$$\LeeWeiRic_{k,(n-1)\varphi}^1(v,P) = \wRic_{k,\varphi}(v,P).$$
The definition in \cite{LinWang} is a more general form of weighted intermediate Ricci curvature. In the case $N=0$, we have
$$\LinWangRic_{k,\frac{k}{n-1}\varphi}^0(v,P) = \LeeWeiRic_{k,\varphi}^1(v,P) = \wRic_{k,\frac{1}{n-1}\varphi}(v,P).$$

\subsection{Main Results}
Let $M$ be a Riemannian manifold and $N \subseteq M$ be a submanifold. The \textit{weighed second fundamental form} of $N$ is the second fundamental form with respect to the conformal metric $\gtilde$, which can be computed as
$$\IItilde(X,Y) = \II(X,Y) - d\varphi(\mathbf{n})g(X,Y),$$
where $\mathbf{n}$ is a normal vector field to $N$. We denote by $\Lambdatilde_k(x)$ the minimum of all sums of $k$ eigenvalues of $\IItilde$ on $N$. For simplicity, when $k=1$, we write $\Lambdatilde_1 = \Lambdatilde$.

In 1987, Wu showed in \cite{Wu} that suitable bounds on the sectional curvature and the second fundamental form of a compact manifold with nonempty boundary imply that the manifold is diffeomorphic to a closed ball. The following result establishes that an analogous statement holds in the weighted setting.
\begin{thm}\label{WuThm1}
	An $n$-dimensional compact weighted Riemannian manifold $(M,g,e^{-\varphi})$ with non-empty boundary $\partial M$ is diffeomorphic to the standard closed $n$-ball if either of the following condition holds:
	\begin{enumerate}
		\item $\Lambdatilde > 0$ and $\min_{\partial M} \Lambdatilde + \rho_0\min_M \wsec_\varphi > 0$, where $\rho_0$ denotes the maximum distance from $\partial M$.
		\item $\Lambdatilde \ge 0$ and $\wsec_\varphi \ge 0$, and in addition, $\wsec_\varphi > 0$ in a neighborhood of $\partial M$.
		\end{enumerate}
\end{thm}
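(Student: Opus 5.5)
We follow Wu's strategy for the unweighted case: study the distance function $r(x)=d(x,\partial M)$ and show that it has a unique maximum point $p$ and no other critical points in the generalized (Grove--Shiohama / $\Ctilde$-function) sense. Once this is established, the gradient-like flow of $r$ gives a diffeomorphism of $M$ onto a closed ball. The weighted curvature conditions will enter through a concavity estimate for $r$. The key observation is that the natural concavity notion is for $\tilde r$, the function $r$ reparametrized along geodesics by the weighted arclength $ds=e^{-\varphi}dt$. Here we use the fact, recalled in the introduction, that $\nabla^\varphi$ has the same geodesics as $\nabla$ up to reparametrization. Equivalently, we measure concavity of $r$ in the sense of barrier (support) functions along $g$-geodesics, using the weighted second variation formula of arc length derived from Wylie's second variation of energy.

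\textbf{Step 1 (weighted Hessian comparison in the barrier sense).} Let $\gamma$ be a minimizing unit speed geodesic from $\partial M$ to $x$, meeting $\partial M$ orthogonally at $\gamma(0)$, with $L=r(x)$. For a unit vector $v\perp\dot\gamma(L)$, we construct a variation whose variational field $V(t)=\psi(t)E(t)$, with $E$ parallel, is weighted so that the $\Hess\varphi$ and $d\varphi^2$ terms combine exactly into $\wsec_\varphi(\dot\gamma,E)$. The natural choice is $\psi(t)=e^{\varphi(\gamma(t))}\eta(t)$-type scaling, matching Wylie's computation. The boundary term at $t=0$ then becomes the weighted second fundamental form $\IItilde(E,E)$, since $\IItilde=\II-d\varphi(\mathbf n)g$. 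The first-variation cross terms from $\psi'$ produce precisely the $d\varphi(\mathbf n)$ correction.

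Taking $\eta$ linear in the appropriate weighted parameter, we obtain an upper barrier $r_v$ for $r$ near $x$ with
$$\text{(weighted) } D^2 r_v(v,v)\le -\frac{1}{\tilde L}\Big(\IItilde(E(0),E(0))+\int_0^{L}\wsec_\varphi(\dot\gamma,E)\,\tfrac{t}{L}\text{-weight}\,dt\Big)$$
up to positive factors. Under condition 1, the bracket is $\ge \Lambdatilde+ \rho_0\min\wsec_\varphi\cdot c>0$ once the weights are normalized. This is the place where the hypothesis has exactly the form $\min\Lambdatilde+\rho_0\min\wsec_\varphi$. Note that we must handle $\min\wsec_\varphi$ possibly negative, so we use $t\le L\le\rho_0$. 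Consequently $r$ is strictly concave in the weighted sense on the interior, i.e.\ a $\Ctilde(1)$-type function.

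\textbf{Step 2 (critical point analysis).} Strict weighted concavity along every geodesic implies that the superlevel sets $\{r\ge c\}$ are strictly (totally) convex. Hence the maximum set of $r$ is a single point $p$, and $r$ has no critical points in $M\setminus(\{p\}\cup\partial M)$. Suppose instead there were a critical point $x\neq p$: a segment from $x$ toward $p$ would then have to increase $r$ initially, yet every initial direction lies in the closed cone of minimal directions, contradicting strict concavity. Near $\partial M$, $r$ is smooth with $|\nabla r|=1$ (using $\Lambdatilde>0$ for a collar). We then apply the isotopy lemma and Morse theory for distance functions: we build a smooth vector field with $\langle X,\nabla r\rangle>0$ away from $p$, and check that $p$ is a nondegenerate-type maximum whose neighborhood is a ball. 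Flowing along this field identifies $M$ with a closed $n$-ball.

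\textbf{Step 3 (condition 2).} Here the concavity is only non-strict, so we argue by perturbation or a rigidity step. Positivity of $\wsec_\varphi$ near $\partial M$ makes the Step 1 inequality strict for all $x$ within distance $\delta$ of $\partial M$. For points farther away, the integral over the initial segment $[0,\delta]$ contributes a strictly positive amount, so the bracket is again $>0$. This reduces condition 2 to the strict concavity situation of Step 1, and Step 2 then applies.

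The main obstacle is Step 1: choosing the reparametrization and variation field so that the second variation produces exactly $\wsec_\varphi$ and $\IItilde$, with all leftover terms having a sign. A further difficulty is making this rigorous at non-smooth points of $r$ (cut locus), which is why we use support functions and the $\Ctilde(k)$ framework rather than a pointwise Hessian.
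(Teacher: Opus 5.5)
There is a genuine gap. The strict concavity of $r$ asserted at the end of Step~1 is false, and all of Step~2 rests on it. There are two independent reasons.

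\emph{The family of curves matters.} You vary along $g$-geodesics. Reparametrizing cannot help, because for $v\perp\nabla r$ the first derivative vanishes, so the sign of the second derivative does not depend on the parametrization. With $V=e^{\varphi}E$, the free endpoint $x$ contributes $e^{2\varphi(x)}d\varphi(\nabla r)|v|^2$ to the boundary term of Proposition~\ref{Second_Variation_Formula}, and this has no sign. Indeed, the transverse second derivative of $r$ along $g$-geodesics is just $\Hess_g r(v,v)$, an unweighted quantity that $\wsec_\varphi$ and $\IItilde$ do not control.

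For a concrete case, take $g$ flat and $\varphi=\log(1+|x|^2)$. Then $\wsec_\varphi(u,v)=2|v|^2/(1+|x|^2)>0$, and $\gtilde$ is a multiple of the round metric in stereographic coordinates. A strictly $\gtilde$-convex domain lying just beyond a great circle (on the side of the projection pole, without containing it) has a Euclidean-concave boundary arc. Near that arc $r$ is strictly \emph{convex} along straight lines. The bad boundary term cancels only if the endpoint moves along the $\gtilde$-geodesic $\wexp_x(se^{\varphi}v)$, which is why the paper works with $\Ctilde$ and $\Hess_{\gtilde}$.

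\emph{Even along $\gtilde$-geodesics, only the transverse part has a sign.} Where $r$ is smooth, $\Hess_{\gtilde} r(\nabla r,\nabla r)=d\varphi(\nabla r)$, and $\Hess_{\gtilde} r(\nabla r,X)=d\varphi(X)$ for $X\perp\nabla r$. These are exactly the terms $-e^{2\varphi(x)}\big(\beta^2 d\varphi(\gammadot)+2\alpha\beta\, d\varphi(X^\perp)\big)$ in the paper's $f''(0)$. In the example above, $d\varphi(\nabla r)>0$ near that arc.

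So $r$ is not strictly concave in any of the senses you invoke. Your Step~2 argument that the segment from $x$ to $p$ ``must increase $r$ initially'' uses concavity along the whole segment, radial directions included, which you do not have. The total convexity of $\{r\ge c\}$ is likewise not derived.

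The missing idea is the paper's device of changing the function rather than the curves. Set $h=\chi(-r)$ (the paper's $\chi\circ(-\rho)$) with $\chi'\ge 1$ and $\chi''=Z\chi'$.
\begin{itemize}
\item The extra term $Z(-b)e^{2\varphi(x)}\beta^2$ coming from $\chi''$ dominates the radial term. After completing the square it also absorbs the mixed term.
\item The transverse part is $\alpha^2\big(\int_0^b e^{2\varphi}\wsec_\varphi\,dt+e^{2\varphi(y)}\IItilde(X^\perp,X^\perp)\big)$.
\item For this, the normal part of the variation field must be $e^{\varphi}\alpha X^\perp$ with \emph{constant} coefficient, so that $|D_tV-d\varphi(\gammadot)V|^2\equiv 0$. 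Your linear $\eta$ either adds a positive kinetic term or, if it vanishes at $\partial M$, loses $\IItilde$.
\end{itemize}
Then $h$ is genuinely strictly $\gtilde$-convex and has the same level sets and critical points as $r$, which is what your Step~2 needs. Alternatively, you would have to prove separately that transverse strict concavity forbids interior local minima of $r$ along $\gtilde$-geodesics, and redo the critical-point argument from that.

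The end of Step~2 also needs repair. The function $r$ is generally not differentiable at its maximum (think of $1-|x|$ on the unit ball), so there is no nondegenerate maximum to appeal to. The paper instead smooths the strictly convex $h$ (Greene--Wu), whose unique critical point is then a nondegenerate minimum, and applies the Morse lemma and the gradient flow.

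Your Step~3 matches the paper's treatment of condition~2. Finally, ``once the weights are normalized'' is not automatic in condition~1. When $\min\wsec_\varphi<0$, the factor $e^{2\varphi}$ varies along $\gamma$, so $\min\Lambdatilde+\rho_0\min\wsec_\varphi>0$ does not by itself bound $\int_0^b e^{2\varphi}\wsec_\varphi\,dt+e^{2\varphi(y)}\IItilde$ from below. The paper is equally brief at this step.
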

In particular, in the case where both $\wsec_\varphi > 0$ and $\Lambdatilde > 0$, Theorem \ref{WuThm1} is true.

In the same work, Wu also extended his result to an intermediate curvature setting, showing that the manifold has the homotopy type of a CW complex. This conclusion partially extends to the weighted $k$-th Ricci curvature as well:

\begin{thm}\label{WuThm2}
	Let $(M,g,e^{-\varphi})$ be an $n$-dimensional compact weighted Riemannian manifold with non-empty boundary $\partial M$. Let $1 \le k \le n$ be some integer. Suppose $\Lambdatilde_k \ge 0$ and $\wRic_{k,\varphi} \ge 0$, then
	\begin{enumerate}
		\item $M$ has the homotopy type of a CW complex with a finite number of cells of dimensions $\le k - 1$.
		\item $M$ also has the homotopy type of a CW complex obtained from $\partial M$ by attaching a finite number of cells each of dimension $\ge n - k + 1$.
	\end{enumerate}
\end{thm}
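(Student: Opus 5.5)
Following Wu's strategy, I will study the distance function from the boundary, $r(x) = d(x,\partial M)$, and show that it is $\Ctilde(k)$-type concave in the interior, in the sense that it has no interior local minima along at least $n-k+1$ directions. This can be read as $-r$ being ``$(k-1)$-convex''. Wu's theory then says that a $\Ctilde(k)$ (strongly $k$-convex) exhaustion-type function yields a Morse-theoretic handle decomposition. For a smooth approximation, every critical point has index $\ge n-k+1$ for $-r$, that is, index $\le k-1$ for $r$ read from the soul side. Starting from sublevel sets near $\partial M$ (a collar $\partial M\times[0,\epsilon)$) and increasing $r$, the manifold $M$ is obtained from $\partial M$ by attaching cells of dimension $\ge n-k+1$, which is conclusion 2. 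Reversing the function (decreasing $r$ from its maximum $\rho_0$) builds $M$ from cells of dimension $\le k-1$, which is conclusion 1. Since $r$ is not smooth, I will work with support functions in the barrier sense, as Wu does, and invoke the Morse theory for $\Ctilde(k)$ functions (Wu / Greene--Wu smoothing) as a black box.

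The key step is the weighted comparison. Fix an interior point $p$ and a minimizing unit-speed geodesic $\gamma:[0,\ell]\to M$ from $\partial M$ to $p$. It meets $\partial M$ orthogonally and has no focal points before $\ell$. Take a $k$-dimensional orthonormal set $e_1,\dots,e_k \perp \dot\gamma(\ell)$ at $p$ and parallel translate it back along $\gamma$. The plan is to consider variations $V_i(t) = e^{\varphi(\gamma(t))-\varphi(p)}\,\psi(t)\,E_i(t)$ with a suitable function $\psi$ with $\psi(\ell)=1$; the conformal factor is chosen so that the $d\varphi$ terms reorganize. Apply the weighted second variation of arc length (the generalization of Wylie's formula mentioned above) to the boundary-to-point distance. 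Summing over $i$ and taking $\psi\equiv1$ gives an upper bound
\[
\sum_{i=1}^k \Hess r(e_i,e_i)\big|_p \le -\sum_{i=1}^k \IItilde(E_i,E_i)\big|_{\gamma(0)}\,\cdot(\text{positive factor}) - \int_0^\ell (\text{positive weight})\sum_{i=1}^k \wsec_\varphi(E_i,\dot\gamma)\,dt + (\text{terms in } d\varphi(\dot\gamma),\ \text{vanishing or nonpositive after reparametrization}),
\]
holding in the support sense. In this bound the boundary term is $\IItilde$ rather than $\II$, because $-d\varphi(\mathbf n)g$ arises from differentiating the conformal factor at $t=0$. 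The hypotheses $\Lambdatilde_k\ge0$ and $\wRic_{k,\varphi}\ge0$ then make the sum of any $k$ eigenvalues of $\Hess r$ at most $0$. Equivalently, $-r$ has the property that the sum of its $k$ smallest Hessian eigenvalues is nonnegative. This is the $\Ctilde(k)$ condition in Wu's sense, up to reversing the sign conventions.

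The main obstacle is that the conclusion only has non-strict inequalities, whereas Morse index bounds need strictness (Wu's $\Ctilde(k)$ functions need a strictly positive sum). I plan to handle this in two ways. The first is to perturb, using $-r + \epsilon\,h$ with $h$ a function that is strictly convex near the relevant region; alternatively, I will use the weighted reparametrized distance $\int e^{-\varphi}$, i.e.\ the $\gtilde$-distance, whose Hessian gains a strictly positive correction from the conformal change. The second is to observe that Wu's index theorem only requires the sum of the $k$ smallest eigenvalues to be $\ge 0$ together with a nondegeneracy-by-approximation argument. I will first try the second route, citing Wu's result that $\Ctilde(k)$-functions with nonnegative $k$-sums admit exhaustions by handles of index $\le k-1$, exactly as in the unweighted proof. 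Verifying that the $d\varphi$ cross terms in the second variation cancel exactly with the chosen conformal factor is the computational heart of the argument. I expect that check to go through, since it mirrors the way Wylie's formula produces $\wsec_\varphi$.
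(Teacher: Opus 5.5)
Your ingredients match the paper's: distance to $\partial M$, $e^{\varphi}$-scaled parallel fields, $\nablatilde$-geodesic variations, $\IItilde$ arising from the boundary term, then smoothing and Morse theory. For frames orthogonal to $\gammadot$ your computation is the paper's case $\beta_j=0$, and it yields only $\ge 0$.

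\textbf{The strictness gap.} This is the genuine gap, and neither of your remedies closes it.
\begin{itemize}
\item The ``result of Wu'' you intend to cite, that merely nonnegative $k$-sums give handles of index $\le k-1$, cannot exist; the example below refutes it.
\item The $\gtilde$-distance gains no positive correction: for constant $\varphi$ it is a constant multiple of $r$.
\item Perturbing to $-r+\epsilon h$ presupposes a function $h$ with positive $k$-sums, which is exactly what has to be produced.
\end{itemize}
In fact no argument can succeed under the hypotheses as literally stated. Take the flat $T^{n-1}\times[0,1]$ with $\varphi\equiv 0$. Its boundary is totally geodesic, so $\Lambdatilde_k=0$ and $\wRic_{k,\varphi}=0$, and $-r=\max(-t,t-1)$ is even convex. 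Yet $H_{n-1}(M;\Z)\cong\Z$, which contradicts conclusion 1 for every $1\le k\le n-1$. Also, $M$ is connected while $\partial M$ has two components, and attaching cells of dimension $\ge n-k+1\ge 2$ does not change $\pi_0$, so conclusion 2 fails too.

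Some strict inequality has to be assumed, e.g.\ $\Lambdatilde_k>0$, or $\wRic_{k,\varphi}>0$ near $\partial M$, as in the two alternatives of Theorem~\ref{WuThm1}. It enters as the strictly positive coefficient of $\alpha^2$ when completing the square. Note that the paper's proof has the same hole. It asserts $\eta>0$ after taking $Z(-b)$ large. But for frames with every $E_j\perp\gammadot(0)$ all $\beta_j$ vanish, so the $Z$-term vanishes and the displayed estimate gives only $\ge 0$.

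\textbf{Non-orthogonal frames.} Even with a strict hypothesis you have a second gap. $\Ctilde(k)$, and Lemma~\ref{Ctidle(k)Lemma}, concern all (almost) orthonormal $k$-frames, not only those orthogonal to $\gammadot$. For the bare function $-r$ the radial direction carries no positivity: up to orientation, its entry in the $\gtilde$-Hessian is $e^{2\varphi}d\varphi(\gammadot)$, which can be negative. So a frame containing $\gammadot$ sees only $k-1$ tangential terms, which $\Lambdatilde_k$ and $\wRic_{k,\varphi}$ do not control. Mixed frames also produce radial--tangential cross terms, such as $2\alpha\beta e^{2\varphi}d\varphi(X^\perp)$ in the proof of Theorem~\ref{WuThm1}.

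The paper's remedy, which you omit, is to work with $\chi\circ(-\rho)$ where $\chi''=Z\chi'$. It uses variation fields $W_j(t)$ that rotate $E_j$ at the interior point into $T\partial M$ at the foot point, so the varied curves still end on $\partial M$ and the support property survives. This adds $Ze^{2\varphi}\beta^2$, and the cross terms are then absorbed by completing the square against the positive $\alpha^2$-coefficient. Since the cross terms are linear in $\beta$, that step fails without such a coefficient.

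\textbf{Index bookkeeping.} Your indices are reversed. Positive $k$-sums for $\Hess(-r)$ give $\operatorname{index}(-r)\le k-1$, and hence $\operatorname{index}(r)\ge n-k+1$. The first yields conclusion 1 via sublevel sets of $-r$. The second yields conclusion 2 via sublevel sets of $r$ starting from a collar of $\partial M$.
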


Several years later, in 1993, Zhongmin Shen established in \cite{Shen} that a similar conclusion holds for proper open manifolds, which are not necessarily compact. For the definition of a proper manifold, please refer to Section \ref{ProperManifoldSubsection}. This result also admits a weighted analogue:

\begin{thm}\label{Shen's_Result}
	Let $(M,g,e^{-\varphi})$ be a proper open weighed Riemannian manifold such that $e^\varphi$ grows strictly sublinearly. For some $1 \le k \le n-1$, suppose $\wRic_{k,\varphi} \ge 0$ and $\wRic_{k,\varphi} > 0$ outside of a compact set. Then $M$ has the homotopy type of a CW complex with (possibly infinitely many) cells each of dimension $\le k - 1$. This implies that
	$$H_i(M; \Z) = 0$$
	for all $i \ge k$.
\end{thm}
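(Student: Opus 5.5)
Following Shen's approach in \cite{Shen}, I will show that the distance function from a point has no critical points of index $\ge k$ in a suitable sense. More precisely, I will build a proper exhaustion function on $M$ whose sublevel sets change topology only by attaching cells of dimension $\le k-1$, and then apply Morse theory for $\Ctilde(k)$ functions.

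Fix a base point $p$ and a large compact set $K \supseteq B(p,R_0)$ outside of which $\wRic_{k,\varphi} > 0$. Properness of $M$ (Section \ref{ProperManifoldSubsection}) is what supplies a proper function $F$ to work with. I take $F$ to be the distance to a suitable compact set or to infinity, for instance a Busemann-type function, or $r = d(p,\cdot)$ itself.

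\textbf{Step 1: weighted index comparison.} Fix a minimizing geodesic $\gamma$ from $p$ to $x$, where $r = d(p,\cdot)$. In the second variation of arc length in weighted form (Wylie's formula, generalized as mentioned in the introduction), use variation fields $V_i = e^{\varphi}\,\psi(t)\, E_i$. Here the $E_i$ are parallel orthonormal fields spanning a $k$-plane $P \perp \dot\gamma$, and $\psi$ is chosen with the reparametrization $ds = e^{-2\varphi}dt$ adapted to $\gtilde$. Summing over $i = 1,\dots,k$ gives
\[
\sum_{i=1}^k \Hess r(V_i,V_i) \le \text{boundary terms} - \int \psi^2 \wRic_{k,\varphi}(\dot\gamma,P)\,(\cdots) + \int (\psi')^2(\cdots).
\]
Because $e^\varphi$ grows strictly sublinearly, I can choose $\psi$ so that the integral of $(\psi')^2$ weighted by $e^{2\varphi}$ decays like $o(1)$ relative to $r$. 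The strictly positive $\wRic_{k,\varphi}$ on the portion of $\gamma$ outside $K$ then dominates, so the sum is negative for $x$ far away. This shows that $-r$, or more precisely a modification of it, is a $\Ctilde(k)$ function outside a compact set: at every point the Hessian in the support sense has at most $k-1$ nonnegative eigenvalues. Equivalently, the relevant exhaustion is strictly "$k$-convex in the wrong direction", so that critical points have index at most $k-1$.

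\textbf{Step 2: Morse theory.} Using the $\Ctilde(k)$ Morse theory (Wu/Shen's theorem that a proper $\Ctilde(k)$ exhaustion yields cells of dimension $\le k-1$), I approximate the function by a smooth Morse function preserving the index bound. Inside $K$, $\wRic_{k,\varphi} \ge 0$ together with the same variation argument shows the index bound persists, possibly after perturbation. Passing to the limit over sublevel sets gives the CW structure with possibly infinitely many cells, each of dimension $\le k-1$. Cellular homology then immediately gives $H_i(M;\Z) = 0$ for $i \ge k$.

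\textbf{Main obstacle.} The main obstacle is Step 1. I must make the weighted second variation yield a strict sign uniformly far out. This means handling the $e^{\varphi}$ factors in the test fields, which is exactly where the sublinear growth of $e^\varphi$ is needed to kill the $(\psi')^2$ term after converting to $\gtilde$-arclength. I must also handle non-smooth points of $r$ via support functions, so that the $\Ctilde(k)$ condition is verified in the barrier sense.
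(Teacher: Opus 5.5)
There is a genuine gap, and it lies in your choice of function. In Step 1 you vary the minimizing geodesic from $p$ to $x$ with $p$ fixed, so $\psi(0)=0$, $\psi(r)=1$ with $r=d(p,x)$. The derivative term is then forced to be of order $1/r$: already for $\varphi\equiv 0$, Cauchy--Schwarz gives $\int_0^r(\psi')^2\,dt\ge 1/r$. It must be beaten by $\int_0^r\psi^2\,\wRic_{k,\varphi}$, and a bare hypothesis $\wRic_{k,\varphi}>0$ with no rate of decay cannot guarantee this.

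Concretely, take the paraboloid $z=x^2+y^2$ with $\varphi\equiv 0$, $n=2$, $k=1$, and $p$ the vertex. Every hypothesis of the theorem holds: $\sec>0$, and $M$ is proper by Cheeger--Gromoll. Yet the distance circles about $p$ are parallels with positive geodesic curvature, so $\Hess r>0$ on their tangent lines at every point, and $-r$ is nowhere in $\Ctilde(1)$.

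Even granting Step 1, the Morse theory would not give the statement. The function $-r$ is unbounded below and is not an exhaustion. For the exhaustion $r$, ``at most $k-1$ nonnegative eigenvalues'' means index $\ge n-k+1$ at critical points, not $\le k-1$. Your Step 2 claim that inside $K$ ``the same variation argument'' yields the strict bound is also unsupported. Along geodesics where only $\wRic_{k,\varphi}\ge 0$ is known you get at best $\ge 0$, whereas $\Ctilde(k)$ needs a positive $\eta$. A symptom of all this is that $d(p,\cdot)$ is proper on every complete manifold, so your argument never uses the hypothesis that $M$ is proper.

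The missing idea is the Busemann function $b_p$. Properness of $M$ is exactly properness of $b_p$. At each $q$, $b_p$ is supported from \emph{below} by $b_p(q)+T-d(\cdot,\sigma_q(T))$ for a ray $\sigma_q$ leaving $q$ (Lemma \ref{b_p^q,t_supports_b_p}). So the relevant geodesic runs outward from $q$ to infinity, and its length $T$ is a free parameter at each point.

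The paper varies $\sigma_q|_{[0,T]}$ in the directions $(1-t/T)e^{\varphi}e_i$, which vanish at the far end. The derivative term becomes $-\frac{k}{T^2}\int_0^T e^{2\varphi}\,dt$, and the growth hypothesis on the weight is used to make it small by taking $T$ large depending on $b_p(q)$. Meanwhile $b_p$ increases at unit speed along $\sigma_q$, so a fixed unit window of the ray lies in the slab $\{b_p(q)+R_0-a\le b_p\le b_p(q)+R_0-a+1\}$. This slab is compact by properness and lies where the curvature is strictly positive, so the window contributes at least $\tfrac14 H\,I\,\beta^2$. This works at every $q$, including points of the compact set where only $\wRic_{k,\varphi}\ge 0$ holds.

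The $k$-frames in $\Ctilde(k)$ are arbitrary, not just orthogonal to $\sigmadot$, and your Step 1 does not handle this. The paper uses Lemma \ref{Estimate_sum_wsec} for the mixed terms. It then composes to get $\chi\circ b_p$ with $\chi''=Z\chi'$ and $Z$ large, which absorbs the radial component, the cross terms, and the endpoint term $2e^{2\varphi}\alpha\sum_i d\varphi(e_i)$. Lemma \ref{Bilinear_form_lemma} then passes to $\epsilon$-orthonormal frames. With $\chi\circ b_p$ proper and in $\Ctilde(k)$, Lemma \ref{Smoothing_with_proper_Morse_function} gives a proper Morse function of index $\le k-1$, and Morse theory finishes the proof.
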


Note that in the compact case, the weight $e^{\varphi}$ is automatically bounded, whereas in the noncompact setting, the theorem is still true if we assumed $e^{\varphi}$ to be bounded. However, a growth condition called ``strictly sublinear growth'', which will be defined in Definition \ref{StrictlySublinearDefn}, can be applied on $e^{\varphi}$ to ensure the theorem is still true while $e^{\varphi}$ can still be increasing. To avoid repetition, the condition ``strictly sublinear growth'' will be specified in Section \ref{MainResultsOfPositiveWeightedRiccik}.

In the case $k=n-1$, \cite{Lim2} was able to produce similar results without assuming $M$ is proper, but requiring function $\varphi$ to be bounded.

\begin{thm}[\cite{Lim2}] \label{AliceLim'sResult}
	Let $M$ be an $n$-dimensional complete weighted manifold that's noncompact. If $\Ric^N_\varphi > 0$ with $\varphi < K$ for some $K \in \R$ and $N \le 1$, then
	$$H_{n-1}(M,\Z) = 0.$$
\end{thm}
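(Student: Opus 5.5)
The plan is to argue by contradiction: a nonzero class in $H_{n-1}(M;\Z)$ should produce a line in $M$ or in a covering space of $M$. The weighted splitting theorems from the table above (\cite{WoolgarWylie} for $N<1$ and $N=1$) then force a direction in which $\Ric^N_\varphi$ vanishes, contradicting $\Ric^N_\varphi>0$. The hypothesis $\varphi<K$ is exactly what those splitting theorems require, and both it and the curvature condition lift to any Riemannian covering.

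\textbf{Step 1: topological reduction.} First assume $M$ is orientable; otherwise pass to the orientation double cover and use twisted coefficients, which needs some care. Poincar\'e duality gives $H_{n-1}(M;\Z)\cong H^1_c(M;\Z)$. Next use the exact sequence
\[
H^0(M)\to \varinjlim_K H^0(M\setminus K)\to H^1_c(M)\to H^1(M).
\]
A nonzero class in $H^1_c(M;\Z)$ then leads to one of two cases. In the first case the class comes from the ends, so $M$ has at least two ends. In the second case its image in $H^1(M;\Z)=\operatorname{Hom}(\pi_1M,\Z)$ is nonzero. In that case the class is represented by a compact two-sided hypersurface $\Sigma$, dual to a proper map $M\to\R$ or $S^1$. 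The corresponding surjection $\pi_1 M\to\Z$ (or onto a nontrivial quotient of $\Z$) gives an infinite cyclic cover $\widetilde M$. In $\widetilde M$ a lift of $\Sigma$ is compact and separates $\widetilde M$ into two noncompact pieces, so $\widetilde M$ has at least two ends.

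\textbf{Step 2: constructing a line.} Whenever a complete manifold has a compact set whose complement has two unbounded components, the standard argument gives a line. Pick points $p_i$ and $q_i$ going to infinity in the two components. Minimizing segments from $p_i$ to $q_i$ must cross the compact set, and an Arzel\`a--Ascoli limit of these segments is a line. Apply this in $M$ or in $\widetilde M$, equipped with the lifted metric and lifted $\varphi$. The lift of $\varphi$ is still bounded above by $K$, and $\Ric^N_\varphi>0$ still holds because the condition is local.

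\textbf{Step 3: splitting and contradiction.} For $N<1$ with $\varphi<K$, the splitting theorem of \cite{WoolgarWylie} gives an isometric splitting $H\times\R$. I expect the proof also shows that $\varphi$ is constant along the $\R$-factor, since the Busemann functions are harmonic for the weighted Laplacian with equality in the Bochner argument. This gives $\Ric^N_\varphi(\partial_t,\partial_t)=0$, contradicting strict positivity. For $N=1$, \cite{WoolgarWylie} only gives a warped product splitting. Here I would check that the equality case of their Riccati/Bochner argument along the line gives $\Ric^1_\varphi(\partial_t,\partial_t)=0$; this again contradicts $\Ric^1_\varphi>0$.

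\textbf{Main obstacle.} I expect the main difficulty to be this last rigidity point, especially the $N=1$ warped case. There one must extract the vanishing of $\Ric^N_\varphi$ in the line direction from the equality case of the Busemann-function argument, not merely a metric splitting. A secondary obstacle is the nonorientable case of Step 1, where I would work on the orientation double cover and track how the class behaves under transfer.
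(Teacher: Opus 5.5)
\textbf{No proof in the paper; genuine gap in your Step 3.} The paper does not prove this statement. It is quoted from \cite{Lim2}, so there is no internal argument to compare against. Your strategy is the standard Shen--Sormani-type argument: a nonzero class gives a line in $M$ or in a cover, and a line is incompatible with strict positivity when $\varphi$ is bounded above.

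\textbf{Steps 1--2 are sound after small repairs.} First, the class in $H^1_c(M;\Z)$ is represented by a map $M\to S^1$ that is constant outside a compact set. It is not a proper map, since no proper map from a noncompact $M$ to $S^1$ exists. Second, a nonzero homomorphism $\pi_1M\to\Z$ already has image $\cong\Z$, so no finite quotients arise. Third, the separating compact set in $\widetilde M$ should be a level set $\tilde f^{-1}(t)$ of the lift $\tilde f:\widetilde M\to\R$; deck translates $\delta^{\pm i}x_0$ then lie on opposite sides. Fourth, the nonorientable case follows by transfer to the orientation cover. Indeed $\pi_*\circ\tau=2$, and $H_{n-1}(M;\Z)$ is torsion-free for noncompact $M$, because $H_n(M;\Z/p)=0$.

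\textbf{First problem: the normalization of $N$.} You never fix it, and it matters. With the paper's own definition $\Ric^N_\varphi=\Ric+\Hess\varphi-\frac{1}{N}d\varphi\otimes d\varphi$, the statement is false for $N\in(1-n,0)$. Take $M=\R\times S^{n-1}$ with $g=dt^2+e^{2\psi(t)}g_{S^{n-1}}$, $\psi=-e^{-t}$, and $\varphi=(n-1)\psi<0$. Then $\Ric^N_\varphi(\partial_t,\partial_t)=(n-1)e^{-2t}\bigl(\frac{n-1}{|N|}-1\bigr)>0$. For unit $X$ tangent to the spheres, $\Ric^N_\varphi(X,X)=(n-2)e^{2e^{-t}}+e^{-t}>0$, and there are no mixed terms. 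Yet $H_{n-1}(M;\Z)=\Z$.

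In that normalization the statement holds only where $-\frac1N\le\frac1{n-1}$, i.e.\ $N\in(-\infty,1-n]\cup(0,1]$. The splitting table you invoke, taken from \cite{Lim2}, uses $\Ric^N_\varphi=\Ric+\Hess\varphi-\frac{1}{N-n}d\varphi\otimes d\varphi$, and the theorem must be read that way. Then $N\le1$ gives $\frac{1}{n-N}\le\frac{1}{n-1}$, so the hypothesis implies $\bigl(\Ric+\Hess\varphi+\frac{1}{n-1}d\varphi\otimes d\varphi\bigr)>0$.

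\textbf{Second problem: the rigidity step is only asserted.} The rigidity you say you ``expect'' is the analytic heart of the proof. It can be done uniformly, without the warped-product structure you flag as the main obstacle.
\begin{enumerate}
\item Along a minimizing segment set $m=\Delta_\varphi r$, $u=e^{2\varphi/(n-1)}m$ and $ds=e^{-2\varphi/(n-1)}dt$. The Riccati inequality becomes $\frac{du}{ds}\le-\frac{u^2}{n-1}-e^{4\varphi/(n-1)}\bigl(\Ric+\Hess\varphi+\frac{1}{n-1}d\varphi\otimes d\varphi\bigr)(\dot\gamma,\dot\gamma)$.
\item The bound $\varphi\le K$ gives $s\ge e^{-2K/(n-1)}t$. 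Hence $\Delta_\varphi r\le(n-1)e^{-2\varphi/(n-1)}/s\to0$, and the Busemann functions $b^\pm$ of the line satisfy $\Delta_\varphi b^\pm\ge0$ in the barrier sense.
\item Since $b^++b^-\le0$ with equality on the line, the strong maximum principle gives $b^++b^-\equiv0$. So $b=b^+$ is smooth, with $\Delta_\varphi b=0$ and $|\nabla b|=1$.
\item The weighted Bochner formula gives $0=|\Hess b|^2+(\Ric+\Hess\varphi)(\nabla b,\nabla b)$. Since $\Hess b(\nabla b,\cdot)=0$ and $\Delta b=d\varphi(\nabla b)$, we have $|\Hess b|^2\ge\frac{1}{n-1}d\varphi(\nabla b)^2$.
\item Hence $\bigl(\Ric+\Hess\varphi+\frac{1}{n-1}d\varphi\otimes d\varphi\bigr)(\nabla b,\nabla b)\le0$, which contradicts the positivity derived above.
\end{enumerate}
This handles $N<1$ and $N=1$ simultaneously. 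In the merely nonnegative case with $N<1$, the same inequality forces $d\varphi(\nabla b)=0$, which is the constancy of $\varphi$ you anticipated. This argument is what your Step 3 needs in place of the unverified appeal to the splitting theorems.
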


Finally, using the second variation formula (introduced in the next section), we obtain a weighted version of Frankel’s theorem under positive weighted intermediate Ricci curvature:

\begin{thm}\label{Frankel's_Theorem}
	In a complete connected weighted Riemannian manifold $(M,g,e^{-\varphi})$ with $\wRic_{k,\varphi}(M) > 0$, let $N_1, N_2 \subseteq M$ be $r$-dimensional and $s$-dimensional totally geodesic submanifolds with respect to the metric $\gtilde$, respectively. If $s + r \ge n + k - 1$, then
	$$N_1 \cap N_2 \not= \varnothing.$$
\end{thm}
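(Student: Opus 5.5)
We argue by contradiction, following Frankel's classical approach with Wylie's weighted second variation formula for arc length replacing the classical one. Suppose $N_1 \cap N_2 = \varnothing$. We will assume that $N_1, N_2$ are compact (or at least that the distance between them is realized), since otherwise the intersection statement can fail even in Euclidean space. Then there is a minimizing unit speed $g$-geodesic $\gamma:[0,\ell]\to M$ with $\gamma(0)\in N_1$, $\gamma(\ell)\in N_2$ and $\ell = \dist(N_1,N_2)>0$. The first variation shows that $\gamma$ meets both submanifolds orthogonally. Since $\gtilde = e^{-2\varphi}g$ is conformal to $g$, orthogonality is the same for $g$ and $\gtilde$.

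Next we build a large space of parallel fields. Let $E\subseteq T_{\gamma(0)}M$ denote the orthogonal complement of $\gammadot(0)$, which has dimension $n-1$. Parallel transport $T_{\gamma(0)}N_1$ along $\gamma$ with respect to the Levi-Civita connection to obtain an $r$-dimensional subspace $A\subseteq E_\ell := \gammadot(\ell)^\perp$. In $E_\ell$ this subspace meets $B := T_{\gamma(\ell)}N_2$ in a space of dimension
$$\dim(A\cap B)\ \ge\ r+s-(n-1)\ \ge\ k.$$
So we obtain $k$ orthonormal vector fields $E_1,\dots,E_k$, parallel along $\gamma$, with $E_i(0)\in T N_1$ and $E_i(\ell)\in TN_2$. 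Because they are parallel, they are automatically orthogonal to $\gammadot$.

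For each $i$ we consider the variation of $\gamma$ with variational field $V_i = \psi E_i$, whose endpoints stay in $N_1$ and $N_2$. The weight $\psi$ is chosen so that the weighted second variation formula takes its cleanest form. The natural choice, mirroring Wylie's reparametrization argument, is $\psi = e^{\varphi\circ\gamma}$ up to a constant. The second variation formula of arc length, written via the connection $\nabla^\varphi$ (equivalently via $\wsec_\varphi$), then reads schematically as
$$\frac{d^2}{ds^2}L(\gamma_s)\Big|_{s=0}= -\int_0^\ell \psi^2\,\wsec_\varphi(E_i,\gammadot)\,(\text{positive factor})\,dt\;+\;\text{boundary terms}.$$
Here the derivative terms in $E_i$ vanish because $E_i$ is parallel, and the weight terms are absorbed by the choice of $\psi$. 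The boundary terms are second fundamental forms of $N_1,N_2$ in the directions $E_i$, corrected by $d\varphi(\gammadot)$. These combine into exactly $\IItilde(E_i,E_i)$ at the two endpoints, and they vanish because both $N_1$ and $N_2$ are totally geodesic for $\gtilde$.

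Summing over $i=1,\dots,k$ gives
$$\sum_{i=1}^k \frac{d^2}{ds^2}L \;=\; -\int_0^\ell (\text{positive})\,\wRic_{k,\varphi}(\gammadot,\Span\{E_i\})\,dt \;<\;0.$$
Hence some variation strictly decreases length, which contradicts the minimality of $\gamma$. Therefore $N_1\cap N_2\neq\varnothing$.

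The main obstacle is deriving the weighted second variation of length with submanifold endpoints. One must verify that the boundary terms assemble precisely into $\IItilde$, with the sign convention $\II - d\varphi(\mathbf n)g$ consistent at both ends, where the normal is $\pm\gammadot$. One must also check that the choice of $\psi$ removes all $d\varphi$ and $\Hess\varphi$ cross terms except those inside $\wsec_\varphi$. We would first carry this out by reparametrizing $\gamma$ as a $\nabla^\varphi$-geodesic, taking the fields $E_i$ to be $\nabla^\varphi$-parallel (up to scaling), and computing directly with $R^\varphi$.
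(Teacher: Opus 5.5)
Your proposal is correct and is essentially the paper's proof. The shared steps are: a minimizing geodesic between $N_1$ and $N_2$; the dimension count $r+s-(n-1)\ge k$ for the parallel-transported tangent spaces inside $\gammadot(\ell)^\perp$; variation fields $e^{\varphi}E_i$ with $E_i$ Levi-Civita parallel, so the derivative term in Proposition~\ref{Second_Variation_Formula} vanishes; boundary terms that reduce to $\IItilde$ and hence vanish; and total second variation $-\int e^{2\varphi}\sum_i \wsec_\varphi(E_i,\gammadot)\,dt<0$. The weighted length formula you flag as the main obstacle is exactly Proposition~\ref{Second_Variation_Formula}, so the separate $\nabla^\varphi$-parallel computation is unnecessary, and your caveat that the distance between $N_1$ and $N_2$ must be attained (e.g.\ by compactness) is a hypothesis the paper uses implicitly when it picks $p_1,p_2$.
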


In Section \ref{Preliminary}, we will introduce some preliminary tools such as the second variation formula for arch length and smoothing theorem. In Section \ref{MainResultsOfPositiveWeightedSec}, we will prove Theorem \ref{WuThm1}. In Section \ref{MainResultsOfPositiveWeightedRiccik}, we will introduce proper manifolds, compute necessary bounds the weighted Riemannian curvature tensor, and we will prove Theorem \ref{Shen's_Result}.  In Section \ref{CompactManifoldsWithwRickBounds}, we will prove Theorem \ref{WuThm2}. Lastly, in Section \ref{FrankelsTheorem}, we will prove \ref{Frankel's_Theorem}.

\subsection{Acknowledgment}

The paper author would like to thank his advisor William Wylie for his patience and helpful discussions.

\section{Preliminary}\label{Preliminary}
\subsection{Second Variation Formula}
For a Riemannian manifold $(M,g)$, the classical second variation of arc length formula can be found in \cite[page 300]{Lee}:
$$\dfrac{d^2}{ds^2} L_g(\Gamma_s) = \int_a^b \left|D_t V^\perp \right|^2 - R(V^\perp, \dot\gamma, \dot\gamma, V^\perp) \, dt,$$
where $L_g$ denotes the arc length functional with respect to $g$, $\Gamma(s,t)$ is a proper variation of a geodesic $\gamma$ with variation field $V$, and $V^\perp$ denotes the normal component of $V$.

Note that a second variation formula for the energy functional involving $\wsec_\varphi$ also exists. However, for the purposes of the main results, the arc length formulation is more effective. Following the approach in \cite{Wylie}, we derive a modified second variation formula for arc length in which the curvature tensor $R$ is replaced by $R^\varphi$. This formulation naturally incorporates weighted sectional curvature.

Throughout the remainder of the paper, we will use $\nabla$ to denote the Levi-Civita connection of the metric $g$ and $\nablatilde$ to denote the Levi-Civita connection of the conformal metric $\gtilde = e^{-2\varphi}g$. Additionally, a $\nabla$-geodesic will refer to a curve $\gamma$ satisfying $\nabla_{\gammadot} \gammadot = 0$, and a $\nablatilde$-geodesic will refer to a curve $\gamma$ satisfying $\nablatilde_{\gammadot} \gammadot = 0$. In terms of $\nabla$, $\nablatilde$ has the expression
$$\nablatilde_X Y = \nabla_X Y - d\varphi(X)Y - d\varphi(Y)X + g(X,Y)\nabla\varphi.$$
\begin{prop}[Second Variation of Arc Length]\label{Second_Variation_Formula}
	Suppose $(M,g,e^{-\varphi})$ is a weighted Riemannian manifold. Let $\gamma: [a,b] \to M$ be a unit-speed $\nabla$-geodesic segment, let $\theta : [0,1] \times [a,b] \to M$ be a variation of $\gamma$, and let $V = \dfrac{d\theta}{ds}$ denote its variation field. Then the second variation of the arc length $L_g(\theta_s)$ is given by
	\begin{align*}
		\dfrac{d^2}{ds^2}L_g(\theta_s) 
		=& \int_a^b \big|D_t V^\perp - d\varphi(\dot\gamma)V^\perp\big|^2 
		- g\big(R^\varphi(V^\perp, \dot\gamma)\dot\gamma, V^\perp\big) \, dt \Big|_{s=0} \\
		&+ \Big(d\varphi(\dot\gamma)|V^\perp|^2 + g(D_s V, \dot\gamma)\Big) \Big|_{t=a,\, s=0}^{t=b,\, s=0} \\
		=& \int_a^b \big|D_t V - d\varphi(\dot\gamma)V\big|^2 
		- g(D_t V, \dot\gamma)^2 
		- g\big(R^\varphi(V, \dot\gamma)\dot\gamma, V\big) \, dt \Big|_{s=0} \\
		&+ \Big(d\varphi(\dot\gamma)|V|^2 + g(D_s V, \dot\gamma)\Big) \Big|_{t=a,\, s=0}^{t=b,\, s=0},
	\end{align*}
	where $V^\perp$ denotes the normal component of $V$.
\end{prop}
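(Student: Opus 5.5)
Since $\gamma$ is a unit-speed $\nabla$-geodesic, the classical second variation of arc length (e.g.\ \cite[page 300]{Lee}, with boundary terms kept since $\theta$ need not be proper) reads
\begin{equation*}
\frac{d^2}{ds^2}L_g(\theta_s)\Big|_{s=0} = \int_a^b |D_tV^\perp|^2 - g\big(R(V^\perp,\dot\gamma)\dot\gamma,V^\perp\big)\,dt + g(D_sV,\dot\gamma)\Big|_{t=a}^{t=b}.
\end{equation*}
The weighted formula should then follow by replacing $R$ with $R^\varphi$ and absorbing the difference into the first term and a boundary term. This is the approach in \cite{Wylie} for energy. The key input is the relation between the two curvatures: by the definition of $\nabla^\varphi$ and the identity $g(R^\varphi(u,v)v,u)=\wsec_\varphi(u,v)$ for orthonormal $u,v$, extended by bilinearity in $u$ for $u\perp v$ with $|v|=1$, we get, for $W\perp\dot\gamma$,
\begin{equation*}
g(R^\varphi(W,\dot\gamma)\dot\gamma,W) = g(R(W,\dot\gamma)\dot\gamma,W) + |W|^2\big(\Hess\varphi(\dot\gamma,\dot\gamma)+d\varphi(\dot\gamma)^2\big).
\end{equation*}

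Next I expand the weighted kinetic term with $W=V^\perp$:
\begin{equation*}
|D_tW-d\varphi(\dot\gamma)W|^2=|D_tW|^2-d\varphi(\dot\gamma)\tfrac{d}{dt}|W|^2+d\varphi(\dot\gamma)^2|W|^2.
\end{equation*}
Because $\gamma$ is a $\nabla$-geodesic, $\tfrac{d}{dt}\big(d\varphi(\dot\gamma)\big)=\Hess\varphi(\dot\gamma,\dot\gamma)$. Integrating the middle term by parts therefore gives
\begin{equation*}
-\int_a^b d\varphi(\dot\gamma)\tfrac{d}{dt}|W|^2\,dt=-d\varphi(\dot\gamma)|W|^2\Big|_a^b+\int_a^b\Hess\varphi(\dot\gamma,\dot\gamma)|W|^2\,dt.
\end{equation*}
Adding this to the curvature identity, the $\Hess\varphi$ and $d\varphi^2$ terms cancel exactly against those in $R^\varphi$. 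Hence $\int |D_tW|^2-g(R(W,\dot\gamma)\dot\gamma,W)$ equals $\int |D_tW-d\varphi(\dot\gamma)W|^2-g(R^\varphi(W,\dot\gamma)\dot\gamma,W)$ plus the boundary term $d\varphi(\dot\gamma)|W|^2\big|_a^b$. This yields the first expression.

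For the second expression I pass from $V^\perp$ to $V$. Write $V=V^\perp+f\dot\gamma$ with $f=g(V,\dot\gamma)$, so $D_tV=D_tV^\perp+f'\dot\gamma$ and $f'=g(D_tV,\dot\gamma)$. The curvature term is unchanged, since $R^\varphi(\dot\gamma,\dot\gamma)=0$ by antisymmetry and $g(R^\varphi(V^\perp,\dot\gamma)\dot\gamma,\dot\gamma)$ vanishes by the formula for $R^\varphi$ in terms of $R$. Expanding $|D_tV-d\varphi(\dot\gamma)V|^2-f'^2$ yields $|D_tV^\perp-d\varphi(\dot\gamma)V^\perp|^2$ plus the extra terms $d\varphi(\dot\gamma)^2f^2-2d\varphi(\dot\gamma)ff'$. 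Integrating by parts once more, these extra terms contribute $-\int \Hess\varphi(\dot\gamma,\dot\gamma)f^2$ (up to the $d\varphi^2 f^2$ term) and the boundary term $-d\varphi(\dot\gamma)f^2\big|_a^b$. The boundary term combines with $d\varphi(\dot\gamma)|V^\perp|^2$ to give $d\varphi(\dot\gamma)|V|^2$, and the curvature term is matched by a tangential correction.

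The main obstacle is this tangential bookkeeping, namely verifying that $g(R^\varphi(V,\dot\gamma)\dot\gamma,V)$ differs from the $V^\perp$ version by exactly $f^2(\Hess\varphi+d\varphi^2)$-type terms. For this I compute $R^\varphi(\dot\gamma,\dot\gamma)\dot\gamma$ and the mixed terms directly from the explicit form of $\nabla^\varphi$ (only the $-\alpha(X)Y-\alpha(Y)X$ pieces matter), rather than invoking symmetry, since $R^\varphi$ lacks the usual pair symmetries.
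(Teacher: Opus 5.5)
Your derivation of the first expression is correct and is the paper's own argument: the curvature identity for $W\perp\dot\gamma$, the relation $\frac{d}{dt}d\varphi(\dot\gamma)=\Hess\varphi(\dot\gamma,\dot\gamma)$ along the $\nabla$-geodesic, and one integration by parts.

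The passage to the second expression fails at a specific step. You assert that $g(R^\varphi(V^\perp,\dot\gamma)\dot\gamma,\dot\gamma)$ vanishes. By the formula $R^\varphi(X,Y)Z=R(X,Y)Z+\Hess\varphi(Y,Z)X-\Hess\varphi(X,Z)Y+d\varphi(Y)d\varphi(Z)X-d\varphi(X)d\varphi(Z)Y$ it actually equals
\[
g(R^\varphi(V^\perp,\dot\gamma)\dot\gamma,\dot\gamma)=-\Hess\varphi(V^\perp,\dot\gamma)-d\varphi(V^\perp)\,d\varphi(\dot\gamma).
\]
This is exactly the asymmetry the paper records in its lemma on $g(R^\varphi(U,V)U,U)$.

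Your fallback is also impossible. You hope the curvature term shifts by $f^2(\Hess\varphi+d\varphi^2)$-type terms that absorb the leftover $\int_a^b f^2\big(\Hess\varphi(\dot\gamma,\dot\gamma)+d\varphi(\dot\gamma)^2\big)\,dt$ from the integration by parts. (That leftover enters with a plus sign, not a minus.) But since $R^\varphi(\dot\gamma,\dot\gamma)=0$, replacing $V^\perp$ by $V=V^\perp+f\dot\gamma$ changes the curvature term only by the cross term $f\,g(R^\varphi(V^\perp,\dot\gamma)\dot\gamma,\dot\gamma)$, which is linear in $f$. Done correctly, the bookkeeping shows the second displayed expression equals the first plus
\[
\int_a^b g(V,\dot\gamma)\big(\Hess\varphi(V,\dot\gamma)+d\varphi(V)\,d\varphi(\dot\gamma)\big)\,dt,
\]
and this does not vanish in general.

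So the obstacle you identify cannot be overcome: the second equality is false as stated. Here is a counterexample.
\begin{itemize}
\item Work on flat $\R^2$ with $\varphi=x_1$, $\gamma(t)=(t,0)$ on $[0,1]$, and $\theta(s,t)=(t+s,0)$.
\item Every $\theta_s$ has length $1$, so the true second derivative is $0$. The first expression correctly returns $0$, since $V^\perp=0$ and $D_sV=0$.
\item In the second expression, $V=\dot\gamma$ and $D_tV=0$. The terms $g(D_tV,\dot\gamma)^2$ and $g(R^\varphi(V,\dot\gamma)\dot\gamma,V)$ vanish, and the boundary terms cancel. Yet $|D_tV-d\varphi(\dot\gamma)V|^2\equiv 1$, so the expression returns $1$.
\end{itemize}

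The paper's own proof of the second line (``expanding and regrouping'') does not handle the tangential part either. Its displayed computation even ends with $d\varphi(\dot\gamma)|V^\perp|^2$ rather than $d\varphi(\dot\gamma)|V|^2$ at the boundary, and both versions fail on this example.

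What you can actually prove is the first line, plus the second line in one of two forms: with the integral above subtracted, or under the extra hypothesis $g(V,\dot\gamma)\equiv 0$, where it coincides with the first.
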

\begin{proof}
	Denote $S = D_s\theta$ and $T = D_t\theta$. Using the identity
	$$g(R^\varphi(V^\perp, \gammadot)\gammadot,V^\perp) = g(R(V^\perp, \gammadot)\gammadot,V^\perp) + \Hess \varphi (\gammadot, \gammadot)|V^\perp|^2 + df(\gammadot)^2|V^\perp|^2,$$
	and substituting into the classical second variation formula \cite[page 300 to 301]{Lee}, a direct computation yields the first equality.
	\begin{align*}
		\dfrac{d^2}{ds^2} \Bigg|_{s = 0} L_g(\theta_s) =& \int_a^b |D_t V^\perp|^2 - g(R(V^\perp, \gammadot) \gammadot, V^\perp) \; dt \Bigg|_{s=0} + g(D_s V, \gammadot) \Bigg|_{t = a, s = 0}^{t=b, s = 0}\\
		=& \int_a^b |D_t V^\perp|^2 - g(R^\varphi(V^\perp, \gammadot) \gammadot, V^\perp) + \Hess\varphi(\gammadot, \gammadot)|V^\perp|^2 + d\varphi(\gammadot)^2|V^\perp|^2 \; dt \Bigg|_{s = 0}\\
		&+ g(D_s V, \gammadot) \Bigg|_{t=a, s = 0}^{t=b, s = 0}\\
		=& \int_a^b |D_t V^\perp|^2 - g(R^\varphi(V^\perp, \gammadot) \gammadot, V^\perp) + \left(\dfrac{d}{dt}g(\gammadot, \nabla\varphi)\right)|V^\perp|^2 + d\varphi(\gammadot)^2|V^\perp|^2 \; dt \Bigg|_{s=0}\\
		&+ g(D_s V, \gammadot) \Bigg|_{t = a, s = 0}^{t = b, s = 0}\\
		=& \int_a^b |D_t V^\perp|^2 - g(R^\varphi(V^\perp, \gammadot) \gammadot, V^\perp) - g(\gammadot, \nabla\varphi)\dfrac{d}{dt}|V^\perp|^2 + \dfrac{d}{dt}\left(g(\gammadot, \nabla\varphi)|V^\perp|^2\right)\\
		&+ d\varphi(\gammadot)^2|V^\perp|^2 \; dt \Bigg|_{s=0} + g(D_s V, \gammadot) \Bigg|_{t=a, s = 0}^{t=b, s = 0}\\
		=& \int_a^b |D_t V^\perp - d\varphi(\gammadot)V^\perp|^2 - g(R^\varphi(V^\perp, \gammadot) \gammadot, V^\perp) \; dt \Bigg|_{s=0}\\
		&+ \left(d\varphi(\gammadot)|V^\perp|^2 + g(D_s V, \gammadot)\right) \Bigg|_{t=a, s = 0}^{t=b, s = 0}.
	\end{align*}
	This proves the first equality. For the second equality, note that the integrand can be rewritten by decomposing $V$ into components parallel and orthogonal to $\gammadot$, and then expanding and regrouping terms accordingly. This gives the desired expression.
	\begin{align*}
		\dfrac{d^2}{ds^2}L_g(\theta_s)\Bigg|_{s=0} =& \int_a^b |D_t V|^2 - g(D_tV, \gammadot)^2 - g(R(V, \gammadot)\gammadot, V) \; dt \Bigg|_{s = 0} + g(D_sV, \gammadot) \Bigg|_{t = a, s = 0}^{t = b, s = 0}\\
		=& \int_a^b |D_tV - d\varphi(X)V|^2 - g(R^\varphi(V, \gammadot)\gammadot, V) - g(D_tV, \gammadot)^2 \; dt \Bigg|_{s = 0}\\
		&+ \left(d\varphi(\gammadot)|V^\perp|^2 + g(D_s V, \gammadot)\right) \Bigg|_{t=a, s = 0}^{t=b, s = 0}.
	\end{align*}
\end{proof}
\subsection{Smoothing Theorem}
In \cite{Wu}, Wu defined the class of functions $C(k)$ using $\nabla$-geodesics so that $\Hess f(X,X)$ can be expressed in terms of $Cf(x;X)$. We adapt this construction to the setting of $\nablatilde$-geodesics.\\
Let $f: M \to \R$ be a continuous function on a weighted Riemannian manifold $(M,g,e^{-\varphi})$. Let $\gammatilde: (-a,a) \to M$ be a $\nablatilde$-geodesic with $\gammatilde(0) = x \in M$ and $\dot\gammatilde(0) = e^\varphi X \in T_xM$. Define the following extended real numbers:
\begin{align}
	\Ctilde f(x; e^\varphi X) &:= \liminf_{r \to 0} \frac{f(\gammatilde(r)) + f(\gammatilde(-r)) - 2f(\gammatilde(0))}{r^2}, \label{Ctilde(x;e^varphiX)Definition}\\
	\Ctilde f(x) &:= \inf_{X \in T_xM} \Ctilde f(x; e^\varphi X). \label{Ctilde(x)Definition}
\end{align}
Clearly, if $f \in C^2(M)$, then
$$\Ctilde f(x; e^\varphi X) = \Hess_{\gtilde} f(e^\varphi X, e^\varphi X).$$

For any function $f$ (not necessarily $C^2$), we say it is \textit{strictly convex} if there exists a positive function $\eta$ such that
$$\Ctilde f \ge \eta$$
on $M$. Therefore when $f$ is a $C^2$ function, it is \textit{strictly convex} if
$$\Hess_{\gtilde} f > 0.$$
\begin{defn}
	A set of $k$ vectors $\{X_1, ..., X_k\}$ in an inner product space $V$ is \textit{$\epsilon$-orthonormal} if
	$$|\langle X_i, X_j \rangle - \delta_{ij}| < \epsilon,$$
	for all $i, j = 1, ..., k$.\\
	A set of vector fields on a weighted Riemannian manifold is \textit{$\epsilon$-$\gtilde$-orthonormal} if it is $\epsilon$-orthonormal with respect to $\gtilde$ at each point.
\end{defn}

In the context of a weighted Riemannian manifold $(M,g,e^{-\varphi})$, since unit vectors are of the form $e^\varphi X$, we say a set of $k$ vectors $\{X_1, ..., X_k\}$ is $\epsilon$-orthonormal if
$$|\gtilde(e^\varphi X_i, e^\varphi X_j) - \delta_{ij}| < \epsilon.$$
\begin{defn}
	Given a weighted Riemannian manifold $(M, g, e^{-\varphi})$ and an integer $k$, the class of functions $\Ctilde(k)$ on $M$ is defined to be the set of all continuous functions $f$ on $M$ which are Lipschitz continuous on each compact subset of $M$ with the following property:\\
	For each $x_0 \in M$, there exists a neighborhood $W$ of $x_0$ and some $\epsilon, \eta > 0$ such that if $x \in W$ and $\{X_1, ..., X_k\} \subseteq T_xM$ is an $\epsilon$-orthogonal set, then
	$$\sum_{i = 1}^k \Ctilde f(x; e^\varphi X_i) \ge \eta.$$
\end{defn}

It can be verified that the following smoothing lemma introduced in \cite{Wu} remains valid when $C(k)$ is replaced by $\Ctilde(k)$. Since the only difference between the two settings is the choice of geodesics, $\nabla$-geodesics versus $\nablatilde$-geodesics, the arguments carry over.
\begin{lem}[\textbf{Smoothing Theorem for $\Ctilde(k)$}]\label{Smoothing_Thm_for_Ctilde(k)}
	On a weighted Riemannian manifold $(M, g, e^{-\varphi})$, suppose an $f \in \Ctilde(k)$, where $1 \le k \le \dim M$, and a positive continuous function $\xi$ are given, then there exists a $C^\infty$ function $F \in \Ctilde(k)$ such that
	$$|F - f| < \xi.$$
\end{lem}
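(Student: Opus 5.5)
We plan to follow Wu's proof of the smoothing theorem for $C(k)$ (itself modeled on Greene--Wu), with $\nabla$-geodesics replaced by $\nablatilde$-geodesics. This amounts to running Wu's argument for $C(k)$ on the Riemannian manifold $(M,\gtilde)$. The replacement is legitimate: $\nablatilde$ is the Levi-Civita connection of $\gtilde$, the quantity $\Ctilde f(x;e^\varphi X)$ is exactly Wu's $C f(x;\cdot)$ computed for $\gtilde$ at the $\gtilde$-unit vector $e^\varphi X$, and $\epsilon$-orthonormality is measured in $\gtilde$. Moreover $\Ctilde(k)$ coincides with Wu's class $C(k)$ for the metric $\gtilde$, because Lipschitz continuity on compact sets is independent of which of the two conformal metrics is used. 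So the lemma reduces to Wu's theorem applied to $(M,\gtilde)$. For completeness, we outline the steps of that argument in the $\gtilde$ setting.

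\textbf{Step 1 (local smoothing).} Fix a point $x_0$ with neighborhood $W$ and constants $\epsilon,\eta$ from the definition. We mollify $f$ using $\gtilde$-normal coordinates, i.e. Riemannian convolution
$$f_\delta(x) = \int_{T_xM} f(\wexp_x v)\,\kappa_\delta(|v|_{\gtilde})\,dv,$$
with a standard mollifier $\kappa_\delta$. Since $f$ is locally Lipschitz, $f_\delta \to f$ uniformly on compact sets. The key estimate is a lower bound on $\Hess_{\gtilde} f_\delta$ along $\epsilon/2$-orthonormal $k$-frames. We obtain it by comparing second difference quotients of $f_\delta$ along $\nablatilde$-geodesics through $x$ with averages of the corresponding difference quotients of $f$ along nearby $\nablatilde$-geodesics (parallel-translated frames). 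The error terms come from the Jacobi field and curvature of $\gtilde$ and are controlled by the Lipschitz constant times $O(\delta)$. For small $\delta$ this gives $\sum_i \Hess_{\gtilde} f_\delta(e^\varphi X_i,e^\varphi X_i)\ge \eta/2$ on a slightly smaller neighborhood. This is the main obstacle: the liminf in the definition of $\Ctilde f$ is only pointwise, so we will use Fatou's lemma together with a uniform Lipschitz bound on the difference quotients to pass the lower bound through the integral. This is exactly Greene--Wu's argument, and it uses only geodesics and their variations of the metric $\gtilde$.

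\textbf{Step 2 (gluing).} We take a locally finite cover $\{W_j\}$ of $M$ by such neighborhoods and a subordinate partition of unity $\{\psi_j\}$, and build $F$ inductively. At the $j$-th stage we replace the current function $F_{j-1}$ by $F_j=(1-\psi_j)F_{j-1}+\psi_j (F_{j-1})_{\delta_j}$, which is smooth on $\bigcup_{i\le j}\operatorname{supp}\psi_i$. The Hessian of $F_j$ differs from $(1-\psi_j)\Hess F_{j-1} + \psi_j\Hess (F_{j-1})_{\delta_j}$ by terms involving $d\psi_j\otimes d(F_{j-1,\delta_j}-F_{j-1})$ and $(F_{j-1,\delta_j}-F_{j-1})\Hess\psi_j$. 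Both are small in $C^0$ once $\delta_j$ is small, because the mollification converges uniformly and its gradient stays bounded by the Lipschitz constant. We choose $\delta_j$ so that at each stage $|F_j-F_{j-1}|<\xi/2^{j}$, the $\Ctilde(k)$ lower bound degrades by at most $\eta/2^{j+2}$, and the Lipschitz constants stay bounded on compact sets. By local finiteness the sequence stabilizes on compact sets, so $F=\lim F_j$ is $C^\infty$ and satisfies $|F-f|<\xi$. The accumulated loss in the lower bound is at most $\eta/2$ locally, so $F\in\Ctilde(k)$ with the constant $\eta/2$ in place of $\eta$.
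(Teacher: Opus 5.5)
Your first paragraph is correct, and it is the paper's one-line justification stated more cleanly. Three facts make $\Ctilde(k)$ literally Wu's class $C(k)$ for the Riemannian manifold $(M,\gtilde)$:
\begin{itemize}
\item $\nablatilde$ is the Levi-Civita connection of $\gtilde$;
\item the $e^\varphi X_i$ are $\gtilde$-$\epsilon$-orthonormal exactly when the $X_i$ are $g$-$\epsilon$-orthonormal;
\item local Lipschitz continuity does not depend on which of the two metrics is used.
\end{itemize}
So the lemma is Wu's smoothing theorem applied to $\gtilde$. That theorem is local smoothing plus patching and needs no completeness, which matters because $\gtilde$ need not be complete. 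Citing it is a complete proof, and the paper says no more.

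The outline you add ``for completeness'', however, misdescribes the argument, and both of its steps would fail as written.

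\textbf{Step 2.} The cross term $2\,d\psi_j\otimes d\big((F_{j-1})_{\delta_j}-F_{j-1}\big)$ is not small. Uniform convergence together with a bound on $|dF_\delta|$ does not give $dF_\delta\to dF$. For $F(x)=|x|$ on $\R$, by scaling $F_\delta'(\delta/2)-F'(\delta/2)$ is a fixed negative constant independent of $\delta$. So the term has size $|d\psi_j|\cdot\mathrm{Lip}$ and can swamp $\eta$.

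Moreover, $F_j$ is smooth only where $\psi_j\equiv 1$ or where $F_{j-1}$ was already smooth. With a partition of unity the limit still contains the non-smooth piece $\prod_j(1-\psi_j)\,f$.

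The standard remedy is Richberg's technique, on which the Greene--Wu smoothing is modeled: patch by a maximum rather than by convex combinations. Take local smooth approximants $u_j$ on $U_j$ with $|u_j-f|<\sigma$. Add bumps $\lambda_j$ of small $C^2$ norm that are positive on compact sets $K_j$ covering $M$ and below $-2\sigma$ near $\partial U_j$. Then take a regularized maximum of the $u_j+\lambda_j$. This preserves $C(k)$ for two reasons. Along every geodesic through a point, the second difference of a maximum dominates that of the active function, for all directions simultaneously. And the Hessian of a regularized maximum is at least a convex combination of the $\Hess u_j$.

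\textbf{Step 1.} Fatou's lemma needs a lower bound on the quotients $\big(f(\gamma(r))+f(\gamma(-r))-2f(\gamma(0))\big)/r^2$ that is uniform in $r$. A Lipschitz constant $L$ only gives the bound $-2L/r$. In fact no argument that uses only Lipschitz bounds and almost-everywhere information can work. Take $f(x)=-|x|+\tfrac{\eta}{2}x^2$ on $\R$ with the flat metric and $\varphi=0$. It is Lipschitz with $Cf(x)=\eta$ at every $x\neq 0$, yet $f_\delta''(0)<0$ for small $\delta$.

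The hypothesis at every point therefore has to enter through a maximum-principle comparison. That comparison first upgrades the pointwise condition to an inequality valid uniformly on a neighborhood (for instance, in the sense of distributions). Only then does averaging preserve the lower bound.
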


\section{Compact Manifolds with $\wsec_\varphi(M) > 0$ and Weighted Convex Boundary} \label{MainResultsOfPositiveWeightedSec}
Now we are ready to prove Theorem \ref{WuThm1}. In his original paper \cite{Wu}, Wu used the second variation formula together with the convexity of $-\log \rho$, where $\rho$ is the distance from the boundary $N$ function. In this paper, in order to apply the second variation formula stated in Proposition \ref{Second_Variation_Formula}, we instead use a different convex function. This new function is inspired by \cite{Shen}.
\begin{proof}[Proof to Theorem \ref{WuThm1}:]
	For simplicity, denote $N = \partial M$. Let $x \in B$, where $B \subseteq M \backslash N$ is compact. Let $\rho$ be the distance from the boundary $N$, and let $y \in N$ be such that $\rho(x) = \dist(x,y) = b$ for some $b > 0$. Define
	\begin{equation}
		\chi(\zeta) = \int_{-b}^\zeta \exp\left(\int_{-b}^\xi Z(\tau) \; d\tau \right) \; d\xi, \label{Chi_in_WuThm1}
	\end{equation}
	where $Z: \R \longrightarrow \R$ is a positive continuous function to be chosen later.\\
	We aim to show $\chi(-\rho)$ is convex with respect to the $\gtilde$ metric on some compact $B \subseteq M - N$, i.e. there exists some $\epsilon > 0$ such that
	$$\Ctilde(\chi \circ - \rho) \ge \epsilon$$
	on $B$.\\
	Let $\gamma : [0,b] \longrightarrow M$ be a minimizing normal $\nabla$-geodesic from $x$ to $y$ with $\gamma(0) = x$ and $\gamma(b) = y$. Let $X \in T_xM$ be a $g$-unit vector, and let $X(t)$ be its $\nabla$-parallel transport along $\gamma(t)$. Decompose
	$$X(t) = \alpha X^\perp(t) + \beta \gammadot(t),$$
	where $X^\perp(t) \perp \gammadot(t)$ is a unit vector field along $\gamma(t)$, and $\alpha^2 + \beta^2 = 1$.\\
	Define
	$$W(t) = \alpha X^\perp(t) + \left(1-\dfrac{t}{b}\right) \beta \gammadot(t).$$
	Since $\gamma$ is distance minimizing, we know $\gammadot(b) \perp N$, so $X^\perp(b) \in T_yN$.\\
	Consider the variation
	$$\theta(s,t) = \wexp_{\gamma(t)}(se^{\varphi} W(t)).$$
	For a fixed $s_0$, let $\theta_{s_0}(t) : [0, b] \longrightarrow M$ be the curve $\theta_{s_0}(t) = \theta(s_0,t)$. Define $f(s_0) = L_g(\theta_{s_0}|_0^b)$, the length of $\theta_{s_0}(t)$. At $s_0 = 0$, $\theta_{s_0}(t) = \theta(0,t) = \gamma(t)$, which means $f(0) = b$, so
	$$\chi'(-f(0)) = \chi'(-b) = 1 \text{ and } \chi''(-f(0)) = \chi''(-b) = Z(-b).$$
	Let $h(s) = (\chi \circ -f)(s)$. Then
	\begin{align}
		\notag h''(0) &= \chi''(-f(0))f'(0)f'(0) - \chi'(-f(0)) f''(0).\\
		&= Z(-b)(f'(s))^2 - f''(0). \label{Hess_gtilde h}
	\end{align}
	By the first variation formula,
	\begin{equation}
		f'(0) = \dfrac{d}{ds}\bigg|_{s = 0} L_g(\theta(s,t)) = g\left(e^\varphi W, \gammadot\right) |_{t=0}^{t=b} = -e^\varphi\beta.
	\end{equation}
	Also, by the second variation formula from Proposition \ref{Second_Variation_Formula},
	\begin{align}
		\notag f''(0) =& \dfrac{d^2}{ds^2}\bigg|_{s = 0} L_g(\theta_s)\\
		\notag =& \int_0^b |D_t \left(e^\varphi \alpha X^\perp\right) - d\varphi(\gammadot) e^\varphi \alpha X^\perp|^2 - g(R^\varphi(e^\varphi \alpha X^\perp, \gammadot) \gammadot, e^\varphi \alpha X^\perp) \; dt\\
		&+ \left(d\varphi(\gammadot)|e^\varphi \alpha X^\perp|^2 + g\left(D_s e^\varphi \alpha X^\perp + D_s e^\varphi \beta\left(1-\dfrac{t}{b}\right) \gammadot, \gammadot\right)\right) \Bigg|_{t=0, s = 0}^{t=b, s = 0}. \label{Second_Variation_Plug_in}
	\end{align}
	Since $X^\perp(t)$ is parallel along $\gamma(t)$,
	\begin{equation}
		|D_t \left(e^\varphi \alpha X^\perp\right) - d\varphi(\gammadot) e^\varphi \alpha X^\perp|^2 = |d\varphi(\gammadot) e^\varphi \alpha X^\perp - d\varphi(\gammadot) e^\varphi \alpha X^\perp|^2 = 0. \label{|Stuff|^2=0}
	\end{equation}
	Using the definition of weighted sectional curvature,
	\begin{equation}
		g(R^\varphi(e^\varphi \alpha X^\perp, \gammadot) \gammadot, e^\varphi \alpha X^\perp) = e^{2\varphi} \alpha^2 g(R^\varphi(X^\perp, \gammadot) \gammadot, X^\perp) = e^{2\varphi} \alpha^2 \wsec(X^\perp, \gammadot). \label{Get_wsec}
	\end{equation}
	By the definition of weighted second fundamental form, evaluating the boundary terms of (\ref{Second_Variation_Plug_in}) at $t=b, s=0$ gives
	\begin{align}
		\notag &\left(d\varphi(\gammadot)|e^\varphi \alpha X^\perp|^2 + g\left(D_s e^\varphi \alpha X^\perp + D_s e^\varphi \beta\left(1-\dfrac{t}{b}\right) \gammadot, \gammadot\right)\right) \Bigg|_{t=b, s = 0}\\
		\notag =& d\varphi(\gammadot)|e^{\varphi(y)} \alpha X^\perp|^2 + g\left(\nabla_{\alpha e^{\varphi(y)} X^\perp} e^{\varphi(y)} \alpha X^\perp, \gammadot\right)\\
		=&  -e^{2\varphi(y)} \alpha^2 \tilde{\II}(X^\perp, X^\perp), \label{Evaluate_at_t=b,s=0_to_Get_Weighted_Second_Fundamental_Form}
	\end{align}
	Evaluating the end of (\ref{Second_Variation_Plug_in}) at $t=0, s=0$ gives
	\begin{align}
		\notag&\left(d\varphi(\gammadot)|e^\varphi \alpha X^\perp|^2 + g\left(D_s e^\varphi \alpha X^\perp + D_s e^\varphi \beta\left(1-\dfrac{t}{b}\right) \gammadot, \gammadot\right)\right) \Bigg|_{t=0, s = 0}\\
		\notag =& d\varphi(\gammadot)|e^{\varphi(x)} \alpha X^\perp|^2 + g\left(\nabla_{e^{\varphi(x)} W} e^{\varphi(x)} \alpha X^\perp , \gammadot\right)+ g\left(\nabla_{e^{\varphi(x)} W} e^{\varphi(x)} \beta \gammadot, \gammadot\right)\\
		=& d\varphi(\gammadot)|e^{\varphi(x)} \alpha X^\perp|^2 + g\left(\nabla_{e^{\varphi(x)} W} e^{\varphi(x)} W , \gammadot\right) \label{Evaluate_at_t=0,s=0}.
	\end{align}
	Using the relationship between $\nabla$ and $\nablatilde$,
	\begin{align}
		\notag g\left(\nabla_{e^{\varphi(x)} W} e^{\varphi(x)} W , \gammadot\right) &= g\left(\nablatilde_{e^{\varphi(x)} W} e^{\varphi(x)} W, \gammadot\right) + 2e^{2\varphi(x)}d\varphi(W)g\left(W, \gammadot\right) - d\varphi(\gammadot)\left|e^{\varphi(x)} W\right|^2\\
		\notag &= 2e^{2\varphi(x)}d\varphi(W)g\left(W, \gammadot\right) - d\varphi(\gammadot)\left|e^{\varphi(x)} W\right|^2\\
		\notag &= 2e^{2\varphi(x)}\beta d\varphi(W) - d\varphi(\gammadot)e^{2\varphi(x)}\beta^2 - d\varphi(\gammadot)e^{2\varphi(x)}\alpha^2\\
		&= e^{2\varphi(x)}\beta^2 d\varphi(\gammadot) + 2\alpha\beta e^{2\varphi(x)} d\varphi\left(X^\perp\right) - d\varphi(\gammadot)e^{2\varphi(x)}\alpha^2. \label{Use_Nablatilde_and_Nabla_to_Simplify}
	\end{align}
	Combining results from (\ref{Second_Variation_Plug_in}), (\ref{|Stuff|^2=0}), (\ref{Get_wsec}), (\ref{Evaluate_at_t=b,s=0_to_Get_Weighted_Second_Fundamental_Form}), (\ref{Evaluate_at_t=0,s=0}), and (\ref{Use_Nablatilde_and_Nabla_to_Simplify}), we have
	\begin{align}
		\notag f''(0) =& \int_0^b - e^{2\varphi} \alpha^2 g(R^\varphi(X^\perp, \gammadot) \gammadot, X^\perp) \; dt - e^{2\varphi(y)} \alpha^2 \tilde{\II}(X^\perp, X^\perp)\\
		&- \left(e^{2\varphi(x)}\beta^2 d\varphi(\gammadot) + 2\alpha\beta e^{2\varphi(x)} d\varphi\left(X^\perp\right)\right).
	\end{align}
	Hence, by using (\ref{Hess_gtilde h}),
	\begin{align}
		\notag h''(0) =& Z(-b)e^{2\varphi}\beta^2 + \int_0^b e^{2\varphi} \alpha^2 g(R^\varphi(X^\perp, \gammadot) \gammadot, X^\perp) \; dt + e^{2\varphi(y)} \alpha^2 \tilde{\II}(X^\perp, X^\perp)\\
		\notag &+ e^{2\varphi(x)}\beta^2 d\varphi(\gammadot) + 2\alpha\beta e^{2\varphi(x)} d\varphi\left(X^\perp\right)\\
		\notag =& \alpha^2 \left(\int_0^b e^{2\varphi} \wsec_\varphi(X^\perp, \gammadot) \; dt + e^{2\varphi(y)} \tilde{\II}(X^\perp, X^\perp)\right)\\
		&+ e^{2\varphi(x)}\beta^2(Z(-b) + d\varphi(\gammadot)) + 2\alpha\beta e^{2\varphi(x)} d(X^\perp). \label{Compute_Hess_gtilde h}
	\end{align}
	Completing the square on the last part of (\ref{Compute_Hess_gtilde h}) yields a lower bound
	\begin{align}
		\notag &e^{2\varphi(x)}\beta^2(Z(-b) + d\varphi(\gammadot)) + 2\alpha\beta e^{2\varphi(x)} d(X^\perp)\\
		\notag =& e^{2\varphi(x)}\left(\sqrt{Z(-b) + d\varphi(\gammadot)}\beta + \dfrac{\alpha d\varphi(X^\perp)}{\sqrt{Z(-b) + d\varphi(\gammadot)}}\right)^2 - \dfrac{e^{2\varphi(x)}\alpha^2 d\varphi(X^\perp)^2}{Z(-b) + d\varphi(\gammadot)}\\
		\ge& -\dfrac{e^{2\varphi(x)} \alpha^2 d\varphi(X^\perp)^2}{Z(-b) + d\varphi(\gammadot)}. \label{Complete_the_Square_and_Delete_Positive_Terms}
	\end{align}
	Combining (\ref{Compute_Hess_gtilde h}) and (\ref{Complete_the_Square_and_Delete_Positive_Terms}) gives the inequality
	\begin{equation}\label{Hess_gtildehNonNegative}
		h''(0) \ge \alpha^2 \left(\int_0^b e^{2\varphi} \wsec_\varphi(X^\perp, \gammadot) \; dt + e^{2\varphi(y)} \tilde{\II}(X^\perp, X^\perp) -\dfrac{e^{2\varphi(x)} d\varphi(X^\perp)^2}{Z(-b) + d\varphi(\gammadot)}\right).
	\end{equation}
	Recall that, for any fixed $s_0$, $f(s_0)$ measures the length of $\theta_{s_0}$. Also, $\theta_{s_0}(b) \in N$ because $X^\perp(b) \in T_yN$. Thus, $\theta_{s_0}(t)$ is a path from $\theta_{s_0}(0) \in M$ to the boundary $N$. The length of this segment must be at least $\rho(\theta_{s_0}(0))$, with $-f(0) =-\rho(\theta_0(0))$. Therefore, $-f(s)$ supports \footnote{A function $f_1$ supports another function $f_2$ at $x$ means $f_1(x) = f_2(x)$ and $f_1(y) \le f_2(y)$ for all $y$ near $x$.}\label{support} $-\rho(\theta_s(0))$ at $s=0$ (at $x \in M$), and hence $\chi(-f(s))$ supports $\chi(-\rho(\theta_s(0)))$ at $s=0$.\\
	By (\ref{Ctilde(x;e^varphiX)Definition}), we can compute
	\begin{align*}
		\Ctilde (\chi \circ -f)(x;e^\varphi X) &= \liminf_{s \to 0} \dfrac{\chi(-f(s)) + \chi(-f(-s)) - 2\chi(-f(0))}{x^2}\\
		&\le \liminf_{s \to 0} \dfrac{\chi(-\rho(\theta_s(0))) + \chi(-\rho(\theta_{-s}(0))) - 2\chi(-\rho(\theta_0(0)))}{x^2}\\
		&= \Ctilde (\chi \circ -\rho)(x;e^\varphi X)
	\end{align*}
	By (\ref{Ctilde(x)Definition}), taking infimum over all $X \in T_xM$ gives us
	$$\Ctilde h(x) = \Ctilde (\chi \circ -f)(x) \le \Ctilde (\chi \circ -\rho)(x).$$
	Since $h$ is $C^2$, we have $\Ctilde h(x) = h''(0)$, which is positive by (\ref{Hess_gtildehNonNegative}). Therefore, $\Ctilde(\chi \circ -\rho)$ is also positive. Since $x$ was chosen arbitrarily, it follows from definition that $\chi(-\rho)$ is strictly convex on $M-N$.\\
	The rest of the proof follows from standard Morse theory. Let
	$$\Theta = \int_0^b e^{2\varphi} \wsec_\varphi(X^\perp, \gammadot) \; dt + e^{2\varphi(y)} \tilde{\II}(X^\perp, X^\perp).$$
	We now suppose conditions 1 or 2 holds respectively, and we will show there exists some constant $\psi > 0$ that only depends on $B$ such that $\Theta \ge \psi$.\\
	Suppose condition 1 holds. If $\min_M\wsec < 0$, let
	$$\psi = \begin{cases}
		\umin^2\left(\min_N\Lambdatilde + \rho_0\min_M\wsec_\varphi(M)\right) &\text{ if } \min_M\wsec < 0\\
		\umin^2\min_N \Lambdatilde &\text{ if } \wsec \ge 0,
	\end{cases}$$
	where $\umin^2$ is the minimum of $e^{2\varphi}$ on $B$.\\
	Suppose 2 holds, then for $c > 0$, let $T(c) = \{z \in M \; | \; \rho(z) \le c\}$ be a tubular neighborhood. By our assumption, choose $c < \min_B\rho$ small enough so that $\wsec > \sigma$ for some $\sigma > 0$. Define
	$$\psi = c\sigma.$$
	Under either assumption, it is clear that
	$$\Theta \ge \psi > 0.$$
	Since $B$ is compact, $d\varphi$ is bounded on $B$, so we can choose $Z$ big enough so that
	$$\psi - \dfrac{\umax^2 d\varphi(X^\perp)^2}{Z(-b) + d\varphi(\gamma)} > 0,$$
	where $\umax^2$ is the maximum of $e^{2\varphi}$ on $B$. 
	Therefore $\chi \circ -\rho$ is convex in the metric $\gtilde$.\\
	The rest of the proof follows from Wu's proof in \cite{Wu} as the arguments do not depend on the curvature of the manifold. To summarize, first we apply smoothing theorem from \cite{GreeneWu} to obtain a strictly convex function $\hbar: M-N \longrightarrow \R$ such that $\hbar|_{T(c)-N} = h = \chi \circ -\rho$ for some small enough $c > 0$. Choose $0 < \eta < c$. Define $M' = \{z \in M \; | \; \rho(z) \ge \eta\}$. By \cite[Theorem 3.1.]{Milnor2}, we can conclude $(M, \partial M)$ is diffeomorphic to $(M', \partial M')$. Then we aim to show $(M', \partial M')$ is diffeomorphic to $(B^n, S^{n-1})$.\\
	Note that $h$ is strictly convex with one minimum point, and let it be $x_0$. Rescale $h$ so that $h^{-1}(0) = x_0$ and $h^{-1}(1) = \partial M'$. By Morse Lemma, there exists a coordinate neighborhood $U$ of $x_0$ and a coordinate map $y = (y_1, ..., y_n): U \longrightarrow \R^n$ with $y(x_0) = 0$, and $y$ is a diffeomorphism from $U$ to an open ball $B(2\epsilon) \subseteq \R^n$. Also, $h = \sum_{i = 1}^n y_i^2$ on $U$. By partition of unity, we can change the metric on $M$ to $\bar{g}$ so that $\bar{g}\left(\dfrac{\partial}{\partial y_i}, \dfrac{\partial}{\partial y_j}\right) = \delta_{ij}$ on $U$.\\
	In $U - \{x_0\}$, define $Y = \dfrac{\nabla h}{|\nabla h|^2}$, let $\{\varphi_t\}$ be the flow of $Y$, and let $S = \left\{p \in U \; \Bigg| \; \sum_{i = 1}^n y_i(p)^2 = \epsilon^2 \right\}$. This means $h^{-1}(\epsilon^2) = S$. Hence we can define a diffeomorphism
	$F: S \times (-\epsilon^2,3\epsilon^2) \longrightarrow U - \{x_0\}$ by mapping
	$$(p,t) \mapsto \varphi_t(p),$$
	and the diffeomorphism $y: U - \{x_0\} \longrightarrow B(2\epsilon) - \{0\}$ can be written as
	$$\varphi_t(p) \mapsto \dfrac{\sqrt{\epsilon^2 + t}}{\epsilon}(y_1(p),...,y_n(p)).$$
	$y$ can be extended naturally to a map $M' - \{x_0\} \longrightarrow B^n - \{0\}$ by the uniqueness of integral curves, and allowing $t$ in $F$ to take values in $(-\epsilon^2, 1-\epsilon^2]$. Composing these two functions gives us the desired diffeomorphism $\bar{F}: M' \longrightarrow B^n$.
\end{proof}
\begin{rmk}
	The first part of the proof shows that $h$ is convex on any compact subset $B \subseteq M$. To align with the Morse-theoretic argument, one may take $B = M'$.
\end{rmk}

\section{Proper Manifolds with $\wRic_{k,\varphi}(M) > 0$} \label{MainResultsOfPositiveWeightedRiccik}
We first begin with some preliminary knowledge on Busemann functions and proper manifolds.
\subsection{Background on Proper Manifolds}\label{ProperManifoldSubsection}
On a complete open manifold $M$ with a point $p \in M$, consider the family of functions $b_p^t : M \to \R$ defined by
$$b_p^t(x) = t - d(x, S(p,t)),$$
where $S(p,t)$ denotes the closed geodesic sphere centered at $p$ with radius $t$. By \cite{Shen}, the limit
$$b_p(x) = \lim_{t \to \infty} b_p^t(x)$$
exists and defines the \textit{Busemann function at $p$}.

The following lemma, proved in \cite[Lemma 2.]{Shen}, will be used in the proof of Theorem \ref{Shen's_Result}:
\begin{lem}\label{b_p^q,t_supports_b_p}
	Let $M$ be a complete open manifold and let $p \in M$. For any point $q \in M$, there exists some ray $\sigma_q(t) : [0,\infty) \longrightarrow M$ from $q$ such that the function
	$$b_p^{q,t}(x) = b_p(q) + t - d(x,\sigma_q(t))$$
	supports $b_p(x)$ at $q$.
\end{lem}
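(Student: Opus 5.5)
The plan is to reduce everything to the existence of rays and a first-variation estimate for distance functions, following Shen's argument. Set $b_p^t(x)=t-d(x,S(p,t))$ and $b_p=\lim_{t\to\infty}b_p^t$, which exists by the cited result of \cite{Shen}.

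Fix $q\in M$ and choose an increasing sequence $t_i\to\infty$. For each $i$, pick a point $y_i\in S(p,t_i)$ realizing $d(q,S(p,t_i))$. Let $\tau_i$ be a unit-speed minimizing geodesic from $q$ to $y_i$; such a geodesic exists because $M$ is complete. The initial vectors $\dot\tau_i(0)$ lie in the compact unit sphere of $T_qM$, so after passing to a subsequence they converge to a unit vector $v$. Define $\sigma_q(t)=\exp_q(tv)$. The $\tau_i$ are minimizing on $[0,d(q,y_i)]$ and $d(q,y_i)\ge t_i-d(p,q)\to\infty$, so by continuity of $\exp$ the limit curve $\sigma_q$ is minimizing on every bounded interval. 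Hence $\sigma_q$ is a ray from $q$.

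Next, fix $t>0$ and define $b_p^{q,t}(x)=b_p(q)+t-d(x,\sigma_q(t))$. At $x=q$ we get $b_p^{q,t}(q)=b_p(q)$, since $d(q,\sigma_q(t))=t$. It remains to show $b_p^{q,t}(x)\le b_p(x)$ for all $x$. For large $i$, write $z_i=\tau_i(t)$; these points converge to $\sigma_q(t)$. Because $\tau_i$ minimizes from $q$ to the sphere, the triangle inequality gives
\begin{equation*}
d(x,S(p,t_i))\le d(x,z_i)+d(z_i,y_i)=d(x,z_i)+d(q,S(p,t_i))-t .
\end{equation*}
Therefore
\begin{equation*}
b_p^{t_i}(x)\ge b_p^{t_i}(q)+t-d(x,z_i).
\end{equation*}
Letting $i\to\infty$ and using that $b_p^{t_i}\to b_p$ pointwise gives $b_p(x)\ge b_p(q)+t-d(x,\sigma_q(t))$, which is the support property. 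Note that $d(x,\cdot)$ is continuous, and that $z_i\to\sigma_q(t)$ follows from the convergence of the initial vectors.

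The main obstacle is making the limit step rigorous. The convergence of $z_i$ needs smooth dependence of geodesics on their initial data, uniformly for the fixed time $t$, and the pointwise limit of $b_p^{t_i}$ must match the limit $b_p$ that exists along the full family in $t$. The first holds by completeness and continuity of $\exp$. The second holds because, by the triangle inequality, $b_p^t$ is monotone in $t$ and $1$-Lipschitz, so any subsequence gives the same limit. No weighting plays a role here: the lemma is purely metric in $g$.
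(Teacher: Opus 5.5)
Your proof is correct, and it is the standard argument behind \cite[Lemma 2]{Shen}, which the paper cites rather than reproving: take the ray as a limit of minimal geodesics from $q$ to $S(p,t_i)$, apply the triangle inequality through $\tau_i(t)$, and pass to the limit. One small point of precision: $b_p^t(x)$ is non-increasing in $t$ only once $t\ge d(p,x)$, but you do not need monotonicity at all, since the existence of $\lim_{t\to\infty}b_p^t(x)$ already forces every sequence $t_i\to\infty$ to give the same limit.
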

Recall that a function $f$ is proper if the preimage of every compact set is compact. As shown in \cite{Shen}, properness of $b_p$ is independent of the base point; that is, $b_p$ is proper if and only if $b_q$ is proper for all $q \in M$. We say that $M$ is \textit{proper} if $b_p$ is proper at some $p \in M$.

Additionally, we will require the following strengthened smoothing result in the proof of Theorem \ref{Shen's_Result}:
\begin{lem}\label{Smoothing_with_proper_Morse_function}
	On a weighted Riemannian manifold $(M, g, e^{-\varphi})$, suppose an $f \in \Ctilde(k)$ is proper, where $1 \le k \le \dim M$, and a positive continuous function $\xi$ are given, then there exists a proper Morse function $F \in \Ctilde(k)$ such that
	$$|F - f| < \xi.$$
\end{lem}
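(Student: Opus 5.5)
Two moves give the result. First, Lemma \ref{Smoothing_Thm_for_Ctilde(k)} produces a smooth $\Ctilde(k)$ approximation. Second, a small generic perturbation makes it Morse, while $\xi$ is kept small enough that properness survives.

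\textbf{Step 1: smoothing.} Replace $\xi$ by $\xi' = \min(\xi, 1)/2$. This is still a positive continuous function. Lemma \ref{Smoothing_Thm_for_Ctilde(k)} gives a $C^\infty$ function $F_0 \in \Ctilde(k)$ with $|F_0 - f| < \xi' \le 1/2$.

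Properness is inherited, because $F_0$ stays uniformly close to $f$. Indeed, $F_0^{-1}([a,b]) \subseteq f^{-1}([a-1,b+1])$, and the right-hand side is compact. Since $F_0$ is continuous, $F_0^{-1}([a,b])$ is closed, hence compact. So $F_0$ is proper.

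\textbf{Step 2: making it Morse.} Note that for smooth $F$, the quantity $\Ctilde F(x;e^\varphi X)$ equals $\Hess_{\gtilde}F(e^\varphi X, e^\varphi X)$. So the $\Ctilde(k)$ condition for smooth $F$ says exactly this: sums of the Hessian over $\epsilon$-$\gtilde$-orthonormal $k$-frames are at least $\eta$, locally near each point.

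Take a locally finite cover of $M$ by relatively compact coordinate balls $U_j$, with a subordinate partition of unity $\{\psi_j\}$. On each $\overline{U_j}$, the $\Ctilde(k)$ condition holds with constants $\epsilon_j, \eta_j$ on neighborhoods covering $\overline{U_j}$, so by compactness we get uniform constants there. Following the standard argument (e.g.\ Milnor, or Greene--Wu), inductively replace $F_m = F_{m-1} + \psi_m \ell_m$, where $\ell_m$ is a linear function in the coordinates of $U_m$ with coefficients chosen as follows:
\begin{enumerate}
\item generic, by Sard's theorem, so that $F_m$ has only nondegenerate critical points on $\bigcup_{j\le m} \operatorname{supp}\psi_j$;
\item so small in $C^2$ that the perturbation of $\Hess_{\gtilde}$ on $\operatorname{supp}\psi_m$ is below $\eta_m/(2^{m+1}k)$;
\item so small in $C^0$ that $|\psi_m\ell_m| < 2^{-m}\xi'$;
\item so small in $C^1$ that nondegeneracy already achieved on earlier compact pieces is preserved, since nondegeneracy is an open condition in $C^2$.
\end{enumerate}
Local finiteness makes $F = \lim F_m$ smooth; on any compact set only finitely many terms are nonzero.

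\textbf{Conclusion.} $F$ is Morse by construction. Total $C^0$ error: $|F - F_0| < \xi'$, so $|F - f| < 2\xi' \le \xi$. Properness follows exactly as in Step 1, using $|F - f| < 1$. The $k$-frame sums drop by at most $\sum_m \eta_m 2^{-m-1}$. More precisely, at each point only finitely many indices $m$ contribute, and each index contributes less than $\eta_m/2$ relative to the local constant. So the sums stay above $\eta/2 > 0$ with the same $\epsilon$. Lipschitz continuity on compact sets is automatic because $F$ is smooth. Hence $F \in \Ctilde(k)$.

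\textbf{Main obstacle.} The delicate point is bookkeeping the $C^2$ smallness locally. Perturbations from overlapping charts add up at a point, so each $\eta_m$ must be chosen at most the minimum of the $\Ctilde(k)$ constants over all charts meeting $U_m$. Local finiteness guarantees this minimum is a positive number.
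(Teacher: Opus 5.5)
Your route is the paper's route. The paper gives no argument of its own: it points to Shen's outline, which smooths via Lemma~\ref{Smoothing_Thm_for_Ctilde(k)} and then runs Milnor's perturbation-by-linear-functions argument with $\gtilde$ in place of $g$. In that argument properness survives $C^0$-closeness and the $\Ctilde(k)$ condition survives $C^2$-closeness. Your Step 1, the properness argument and the $\Ctilde(k)$ bookkeeping are fine. The one harmless slip is a missing factor $1+\epsilon$: vectors of an $\epsilon$-orthonormal frame have squared $\gtilde$-length up to $1+\epsilon$, so the frame sums stay above $\eta(1-\epsilon)/2$ rather than $\eta/2$, which is still positive for $\epsilon<1$.

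One step fails as written: item 1, and with it the claim that $F$ is Morse ``by construction''. With $\{\psi_j\}$ a partition of unity, $F_m = F_{m-1}+\psi_m\ell_m$ agrees with $F_{m-1}+\ell_m$ only where $\psi_m \equiv 1$. Where $0<\psi_m<1$ the added term is not linear in the coordinates. So Sard's theorem, applied to the coordinate gradient of $F_{m-1}$, says nothing about nondegeneracy there. For a genuine partition of unity the sets $\{\psi_j\equiv 1\}$ generally do not cover $M$, so many points are never handled.

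The fix is Milnor's setup:
\begin{enumerate}
\item Choose compact $K_j\subset U_j$ with $\bigcup_j K_j = M$.
\item Choose bump functions $\psi_j$, compactly supported in $U_j$ with locally finite supports, and $\psi_j\equiv 1$ on a neighborhood of $K_j$. They need not sum to $1$.
\item Pick $\ell_m$ so that $F_{m-1}+\ell_m$ has only nondegenerate critical points in $U_m$; almost every small linear $\ell_m$ works. Then $F_m$ is nondegenerate on $K_m$.
\item Also make $\psi_m\ell_m$ small in $C^2$, not $C^1$ as your item 4 says, relative to the nondegeneracy margin of $F_{m-1}$ on the compact set $K_1\cup\dots\cup K_{m-1}$.
\end{enumerate}
With that change, induction gives nondegeneracy on every $K_j$. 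Your $C^0$ and $\Ctilde(k)$ smallness requirements carry over unchanged, and the rest of your argument goes through.
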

The proof of this lemma is outlined in \cite[page 15]{Shen2}, relying on results from Section 2 of \cite{Milnor}. In the original argument, the setting is a Riemannian manifold equipped with a metric $g$ and its Levi-Civita connection $\nabla$. Replacing these with the conformal metric $\tilde{g}$ and its Levi-Civita connection $\nablatilde$ yields the stated result.
The results from Theorem \ref{WuThm1} can be extended to yield Theorem \ref{WuThm2}. Before proceeding, we first introduce three other key lemmas.\\
It was shown in \cite{CheegerGromoll} that $M$ is proper when $\sec(M) \ge 0$. More recently, progress has been made in \cite{WeiPan}, which provides examples of open manifolds with non-proper Busemann functions. In that paper, it is shown that for $n \ge 4$, there exist manifolds with positive Ricci curvature and non-proper Busemann functions. To our knowledge, it is not known whether $\Ric_k > 0$ implies proper for $1 < k < n-1$.
\subsection{Proper Open Manifolds with $\wRic_{k,\varphi}(M) > 0$}
The following lemma is shown in \cite[Lemma 6.]{Shen}, and the proof of it uses elementary linear algebra. This will be used in the proof of the main theorem.

\begin{lem}\label{Bilinear_form_lemma}
	Let $V$ be an inner product space with dimension $n$. Let $S$ be a symmetric bilinear form on $V$. Suppose for some $k$, where $1 \le k \le n$, and suppose for some $\eta, A > 0$, $S$ satisfies:
	\begin{enumerate}
		\item $\sum_{i = 1}^k S(e_i, e_i) \ge \eta$ for any orthonormal set $\{e_1, ..., e_k\} \subseteq V$.
		\item $|S(v,v)| \le A|v|^2,$ for all $v \in V$.
	\end{enumerate}
	Then there exists some $\epsilon > 0$ depending only on $k$, $\eta$, and $A$ such that for any $\{v_1, ..., v_k\} \subseteq V$ with $|g(v_i, v_j) - \delta{ij}| < \epsilon$, such that
	$$\sum_{i = 1}^k S(v_i, v_i) \ge \dfrac{\eta}{2}.$$
\end{lem}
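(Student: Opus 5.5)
The argument is a perturbation estimate: start from a $\epsilon$-orthonormal set, compare it with a genuinely orthonormal set obtained by Gram--Schmidt, and bound the difference of the two sums by $A$ times the size of the perturbation. Fix $\{v_1,\dots,v_k\}$ with $|g(v_i,v_j)-\delta_{ij}|<\epsilon$. Let $G=(g(v_i,v_j))_{ij}$ be its Gram matrix, so $\|G-I\|<k\epsilon$ entrywise-summed.

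\textbf{Step 1: nearby orthonormal set.} For $\epsilon$ small (say $k\epsilon<1/2$) the $v_i$ are linearly independent, and Gram--Schmidt produces an orthonormal set $\{e_1,\dots,e_k\}$ with the same span. Equivalently, take $e=vG^{-1/2}$. Since $G^{-1/2}-I$ is bounded by a constant $c(k)\epsilon$, we get
$$|v_i-e_i|\le \delta:=c(k)\epsilon.$$
Here $c(k)$ depends only on $k$, because $|v_i|\le\sqrt{1+\epsilon}$ and the matrix entries are controlled by $\epsilon$. This is the only step with any real content, and even it is routine matrix perturbation.

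\textbf{Step 2: compare the quadratic forms.} By symmetry and bilinearity,
$$S(v_i,v_i)-S(e_i,e_i)=S(v_i-e_i,v_i+e_i).$$
Polarization together with hypothesis 2 gives $|S(x,y)|\le A|x||y|$; more precisely $|S(x,y)|\le \tfrac{A}{2}(|x|^2+|y|^2)$, applied after rescaling. This yields
$$|S(v_i,v_i)-S(e_i,e_i)|\le A\,\delta\,(2+\delta).$$

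\textbf{Step 3: conclude.} Summing over $i$ and using hypothesis 1 on the orthonormal set $\{e_i\}$,
$$\sum_{i=1}^k S(v_i,v_i)\ge \eta-kA\delta(2+\delta).$$
Choose $\epsilon$ so small that $kA\,c(k)\epsilon\,(2+c(k)\epsilon)\le \eta/2$. This $\epsilon$ depends only on $k,\eta,A$, and the bound $\sum_i S(v_i,v_i)\ge\eta/2$ follows.
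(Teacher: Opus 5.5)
Your argument is correct. It is the elementary linear-algebra argument the paper has in mind: orthonormalize, then bound $S(v_i,v_i)-S(e_i,e_i)=S(v_i-e_i,v_i+e_i)$ using $|S(x,y)|\le A|x||y|$; the paper itself writes no proof and only cites Shen's Lemma 6.

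Two cosmetic fixes. The bound $\|G-I\|<k\epsilon$ holds in operator norm (via the maximal row sum), not for the entrywise sum. Your final $\epsilon$ must also satisfy the constraint $k\epsilon<1/2$ from Step 1.
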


The next two lemmas concern the symmetries and bounds of the weighted Riemannian curvature tensor defined in (\ref{WeightedRiemannianCurvatureTensor}).

Note that for the usual Riemannian curvature tensor $R$, we always have
$$g(R(U,V)W,U) = g(R(W,U)U,V) = g(R(U,W)V,U)$$
for all vectors $U, V$, and $W$, by \cite[Proposition 3.3.1.]{Petersen}. However, for the curvature tensor $R^\varphi$, these symmetries do not always hold. The following lemma gives conditions under which these symmetries are preserved.

\begin{lem}\label{g(R^varphi(U,V)W,U)=g(R^varphi(U,W)V,U)Andg(R^varphi(U,U)V,U)=0}
	Let $U,V,W$ be vector fields
	\begin{enumerate}
		\item If $U \perp V$ and $U \perp W$, then $g(R^\varphi(U,V)W,U) = g(R^\varphi(U,W)V,U)$.
		\item $g(R^\varphi(U,U)V,U) = 0$.
	\end{enumerate}
\end{lem}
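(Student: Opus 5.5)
Both parts reduce to an explicit formula for $g(R^\varphi(X,Y)Z,U)$ in terms of the Levi-Civita curvature $R$, $\Hess\varphi$ and $d\varphi$. The plan is to derive that formula first and then read off the two identities. Starting from $\nabla^\varphi_X Y = \nabla_X Y - d\varphi(X)Y - d\varphi(Y)X$, expand \eqref{WeightedRiemannianCurvatureTensor}. The torsion-free terms cancel in the usual way, leaving
\begin{align*}
R^\varphi(X,Y)Z =\;& R(X,Y)Z - \Hess\varphi(X,Z)Y + \Hess\varphi(Y,Z)X \\
&+ d\varphi(X)d\varphi(Z)Y - d\varphi(Y)d\varphi(Z)X .
\end{align*}
In this expansion the terms in $d\varphi(X)\nabla_Y Z$ and the like cancel pairwise, and the $\Hess\varphi(X,Y)Z$ terms cancel by symmetry of the Hessian. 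As a sanity check, the formula reproduces $g(R^\varphi(u,v)v,u)=\sec(u,v)+\Hess\varphi(v,v)+d\varphi(v)^2$ for orthonormal $u,v$, which matches the stated relation with $\wsec_\varphi$.

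Pairing with $U$ gives
\begin{align*}
g(R^\varphi(X,Y)Z,U) =\;& R(X,Y,Z,U) - \Hess\varphi(X,Z)g(Y,U) + \Hess\varphi(Y,Z)g(X,U) \\
&+ d\varphi(X)d\varphi(Z)g(Y,U) - d\varphi(Y)d\varphi(Z)g(X,U).
\end{align*}

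For part 1, take $X=U$, $Y=V$, $Z=W$. Since $g(V,U)=0$, the only surviving correction terms are $\Hess\varphi(V,W)|U|^2 - d\varphi(V)d\varphi(W)|U|^2$. Both are symmetric in $V$ and $W$. Swapping $V$ and $W$ uses $g(W,U)=0$ and produces the same expression. The unweighted parts agree by the classical symmetry $g(R(U,V)W,U)=g(R(U,W)V,U)$ cited from \cite{Petersen}, and this proves part 1.

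For part 2, $R^\varphi(U,U)=0$ directly from antisymmetry of the definition \eqref{WeightedRiemannianCurvatureTensor} in its first two slots, which holds for any connection. So part 2 is immediate; the formula also confirms it, since the correction terms cancel in pairs.

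The only real work is the bookkeeping in expanding $\nabla^\varphi_X\nabla^\varphi_Y Z$, in particular the sign conventions for $R$ and the Hessian. I will double-check that step against the known identity for $\wsec_\varphi$ before relying on the formula.
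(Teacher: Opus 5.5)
Your route is the paper's: write $R^\varphi$ explicitly in terms of $R$, $\Hess\varphi$ and $d\varphi$ (the paper quotes this from Wylie--Yeroshkin rather than deriving it). Orthogonality then kills the terms carrying $g(U,V)$ or $g(U,W)$, so what survives is $|U|^2$ times an expression symmetric in $V,W$. Combine this with the classical symmetry of $R$ for part 1, and get part 2 from antisymmetry of the defining expression. The logic is sound and the conclusion is right.

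However, your formula has a sign slip in the quadratic terms. Set $A=\nabla^\varphi_Y Z$. The piece $-d\varphi(A)X$ of $\nabla^\varphi_X A$ contributes $+2\,d\varphi(Y)d\varphi(Z)X$, while $-d\varphi(X)A$ contributes only $+d\varphi(X)d\varphi(Z)Y$. After antisymmetrizing in $X,Y$ this gives
\[
R^\varphi(X,Y)Z = R(X,Y)Z + \Hess\varphi(Y,Z)X - \Hess\varphi(X,Z)Y + d\varphi(Y)d\varphi(Z)X - d\varphi(X)d\varphi(Z)Y,
\]
which is the formula the paper uses. With your signs one gets $g(R^\varphi(u,v)v,u)=\sec(u,v)+\Hess\varphi(v,v)-d\varphi(v)^2$. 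So the sanity check you say passes actually fails, and it is exactly the check that would have exposed the slip.

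For this lemma the error is harmless. Part 1 only needs $d\varphi(V)d\varphi(W)$ to be symmetric in $V,W$; the correct surviving correction is $|U|^2\bigl(\Hess\varphi(V,W)+d\varphi(V)d\varphi(W)\bigr)$. The sign does matter immediately afterwards, though. The next lemma needs $g(R^\varphi(U,V)U,U)=\Hess\varphi(U,V)+d\varphi(U)d\varphi(V)$ for orthonormal $U,V$, in order to match $\tfrac12\bigl(\wsec_\varphi(Y,X)-\wsec_\varphi(X,Y)\bigr)$. Your version would put a minus sign there.
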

\begin{proof}
	By \cite[Proposition 3.3]{WylieYeroshkin}, for vector fields $X, Y, Z$, the weighted Riemannian curvature tensor $R^\varphi(X,Y)Z$ can be expressed by
	\begin{equation} \label{R^varphiEquationInTermsOfHessAnddvarphi}
		R^\varphi(X,Y)Z = R(X,Y)Z + \Hess\varphi(Y,Z)X - \Hess\varphi(X,Z)Y + d\varphi(Y)d\varphi(Z)X - d\varphi(X)d\varphi(Z)Y.
	\end{equation}
	To show part 1, it follows from (\ref{R^varphiEquationInTermsOfHessAnddvarphi}) that
	\begin{align}
		\notag g(R^\varphi(U,V)W,U) =& g(R(U,V,W),U) + \Hess\varphi(V,W)g(U,U) - \Hess\varphi(U,W)g(U,V) \\
		\notag &+ d\varphi(V)d\varphi(W)g(U,U) - d\varphi(U)d\varphi(W)g(V,U)\\
		=& g(R(U,V,W),U) + \Hess\varphi(V,W)g(U,U) + d\varphi(V)d\varphi(W)g(U,U), \label{g(R^varphi(U,V)W,U)Value}
	\end{align}
	and
	\begin{align}
		\notag g(R^\varphi(U,W)V,U) =& g(R(U,W,V),U) + \Hess\varphi(W,V)g(U,U) - \Hess\varphi(U,V)g(U,W)\\
		\notag &+ d\varphi(W)d\varphi(V)g(U,U) - d\varphi(U)d\varphi(V)g(U,W)\\
		=& g(R(U,W,V),U) + \Hess\varphi(W,V)g(U,U) + d\varphi(W)d\varphi(V)g(U,U). \label{g(R^varphi(U,W)V,U)Value}
	\end{align}
	By (\ref{g(R^varphi(U,V)W,U)Value}) and (\ref{g(R^varphi(U,W)V,U)Value}), we have
	$$g(R^\varphi(U,V)W,U) = g(R^\varphi(U,W)V,U).$$
	To show part 2, it also follows from the definition of $R^\varphi$ (\ref{WeightedRiemannianCurvatureTensor}) that 
	$$R^\varphi(U,U)V = \nabla^\varphi_U\nabla^\varphi_U V - \nabla^\varphi_U\nabla^\varphi_U V - \nabla^\varphi_{[U,U]} V = 0.$$
\end{proof}

\begin{rmk}
	Generally speaking, any curvature tensor $R^{\bar{\nabla}}$ defined by (\ref{RiemannCurvatureTensorDefnFrom2ndDerivatives}) has the property
	$$R^{\bar{\nabla}}(U,U)V = 0$$
	for all vectors $U$ and $V$, simply because
	$$R^{\bar{\nabla}}(U,U)V = \bar{\nabla}_U\bar{\nabla}_U V - \bar{\nabla}_U\bar{\nabla}_U V - \bar{\nabla}_{[U,U]}V = 0.$$
	Part 2 of Lemma (\ref{g(R^varphi(U,V)W,U)=g(R^varphi(U,W)V,U)Andg(R^varphi(U,U)V,U)=0}) is just a specific case of this property.
	One could also prove Part 2 using (\ref{R^varphiEquationInTermsOfHessAnddvarphi}) as follows:
	\begin{align*}
		g(R^\varphi(U,U)V,U) =& g(R(U,U,V),U) + \Hess\varphi(U,V)g(U,U) - \Hess\varphi(U,V)g(U,U)\\
		 &+ d\varphi(U)d\varphi(V)g(U,U) - d\varphi(U)d\varphi(V)g(U,U)\\
		=&0.
	\end{align*}
	However, the property that $R^\varphi(U,U)V = 0$ need not depend on the expression (\ref{R^varphiEquationInTermsOfHessAnddvarphi}).
\end{rmk}

It is also worth noting that for the regular Riemannian curvature tensor, $g(R(U,V)U,U) = 0$ for all vectors $U$ and $V$, because
$$g(R(U,V)U,U) = g(R(U,U)U,V) = 0,$$
by \cite[Proposition 3.3.1.]{Petersen}. For the curvature tensor $R^\varphi$, this symmetry does not hold, even when $U$ and $V$ are orthonormal. The following lemma provides a way to bound this quantity in terms of $\wsec_\varphi$.

\begin{lem}\label{g(R^varphi(U,V)U,U)>=-K}
	Let $(M,g,e^{-\varphi})$ be a weighted Riemannian $n$-manifold with $|\wsec_\varphi| \le K$. Let $U$ and $V$ be orthonormal vector fields. Then
	$$|g(R^\varphi(U,V)U,U)| \le K.$$
\end{lem}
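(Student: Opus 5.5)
The plan is to compute $g(R^\varphi(U,V)U,U)$ explicitly and then express it as a difference of two weighted sectional curvatures, so that the hypothesis $|\wsec_\varphi|\le K$ applies directly. The idea behind this choice is that $\wsec_\varphi$ is not symmetric in its arguments, while the unweighted $\sec$ is; subtracting the two orderings should cancel the Riemannian part and leave only the weight terms.

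\textbf{Step 1.} I will plug $X=U$, $Y=V$, $Z=U$ into \eqref{R^varphiEquationInTermsOfHessAnddvarphi} and take the inner product with $U$. The Riemannian term $g(R(U,V)U,U)$ vanishes by the usual symmetries. Using $g(U,U)=1$ and $g(U,V)=0$, the terms involving $g(V,U)$ drop out, and I expect
$$g(R^\varphi(U,V)U,U)=\Hess\varphi(U,V)+d\varphi(U)d\varphi(V)=:Q(U,V).$$
Here $Q=\Hess\varphi+d\varphi\otimes d\varphi$ is a symmetric bilinear form. By definition, for orthonormal $u,v$ it satisfies $\wsec_\varphi(u,v)=\sec(u,v)+Q(v,v)$.

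\textbf{Step 2.} I will polarize $Q$ using the orthonormal pair
$$a=\tfrac{1}{\sqrt2}(U+V),\qquad b=\tfrac{1}{\sqrt2}(U-V).$$
Symmetry of $Q$ gives $Q(a,a)-Q(b,b)=2Q(U,V)$.

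\textbf{Step 3.} The key observation is that $\sec(a,b)=\sec(b,a)$. Therefore
$$\wsec_\varphi(b,a)-\wsec_\varphi(a,b)=Q(a,a)-Q(b,b)=2Q(U,V).$$
It follows that
$$|g(R^\varphi(U,V)U,U)|=\tfrac12\,|\wsec_\varphi(b,a)-\wsec_\varphi(a,b)|\le\tfrac12(K+K)=K.$$

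The step I expect to be the main obstacle is Step 3, namely choosing a combination that cancels the uncontrolled Riemannian curvature. A first attempt using a third vector $W$ orthogonal to both $U$ and $V$ leaves a leftover term $\sec(W,a)-\sec(W,b)$, which the hypothesis does not control. Swapping the arguments of $\wsec_\varphi$ on the same plane avoids this problem, and it also works when $n=2$, where no such $W$ is available.
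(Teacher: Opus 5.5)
Your proposal is correct and follows the paper's proof: the paper uses the same vectors $X=(U+V)/\sqrt2$, $Y=(U-V)/\sqrt2$ to get $\wsec_\varphi(Y,X)-\wsec_\varphi(X,Y)=2\Hess\varphi(U,V)+2d\varphi(U)d\varphi(V)$. It then identifies this quantity with $g(R^\varphi(U,V)U,U)$ via \eqref{R^varphiEquationInTermsOfHessAnddvarphi}, so the only difference is that you do the two computations in the opposite order, and you also state explicitly that $a,b$ are orthonormal (needed for the hypothesis to apply), which the paper leaves implicit.
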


\begin{proof}
	Let $X = \dfrac{U+V}{\sqrt{2}}$ and $Y = \dfrac{U-V}{\sqrt{2}}$. It follows that
	\begin{align*}
		&\wsec_\varphi(Y,X) - \wsec_\varphi(X,Y)\\
		=& \sec(Y,X) + \Hess\varphi(X,X) + d\varphi(X)^2 - \sec(X,Y) - \Hess\varphi(Y,Y) - d\varphi(Y)^2\\
		=& \Hess\varphi(X,X) + d\varphi(X)^2 - \Hess\varphi(Y,Y) - d\varphi(Y)^2\\
		=& \Hess\varphi\left(\dfrac{U+V}{\sqrt{2}},\dfrac{U+V}{\sqrt{2}}\right) + d\varphi\left(\dfrac{U+V}{\sqrt{2}}\right)^2 - \Hess\varphi\left(\dfrac{U-V}{\sqrt{2}},\dfrac{U-V}{\sqrt{2}}\right) - d\varphi\left(\dfrac{U-V}{\sqrt{2}}\right)^2\\
		=& \dfrac{1}{2}\left(\Hess\varphi(U,U) + \Hess\varphi(V,V) + 2\Hess\varphi(U,V)\right) + \dfrac{1}{2}(d\varphi(U)^2 + d\varphi(V)^2 + 2d\varphi(U)d\varphi(V))\\
		&- \dfrac{1}{2}\left(\Hess\varphi(U,U) + \Hess\varphi(V,V) - 2\Hess\varphi(U,V)\right) - \dfrac{1}{2}(d\varphi(U)^2 + d\varphi(V)^2 - 2d\varphi(U)d\varphi(V))\\
		=& 2\Hess\varphi(U,V) + 2d\varphi(U)d\varphi(V)
	\end{align*}
	Therefore
	\begin{equation}\label{Hess+dvarphi^2>-K}
		|\Hess\varphi(U,V) + d\varphi(U)d\varphi(V)| \le K.
	\end{equation}
	By (\ref{R^varphiEquationInTermsOfHessAnddvarphi}), it follows that
	\begin{align}
		\notag g(R^\varphi(U,V)U,U) =& g(R(U,V,U),U) + \Hess\varphi(V,U)g(U,U) - \Hess\varphi(U,U)g(U,V) \\
		\notag &+ d\varphi(V)d\varphi(U)g(U,U) - d\varphi(U)d\varphi(U)g(V,U)\\
		\notag =& g(R(U,V,U),U) + \Hess\varphi(V,U)g(U,U) + d\varphi(V)d\varphi(U)g(U,U)\\
		=& \Hess\varphi(V,U) + d\varphi(V)d\varphi(U) \label{g(R^varphi(U,V)U,U)=Hessvarphi(V,U)+dvarphi(V)dvarphi(U)}
	\end{align}
	Combining (\ref{Hess+dvarphi^2>-K}) and (\ref{g(R^varphi(U,V)U,U)=Hessvarphi(V,U)+dvarphi(V)dvarphi(U)}), we get
	$$|g(R^\varphi(U,V)U,U)| \le K,$$
	as desired.
\end{proof}

The following lemma combines results from Lemma (\ref{g(R^varphi(U,V)W,U)=g(R^varphi(U,W)V,U)Andg(R^varphi(U,U)V,U)=0}) and Lemma (\ref{g(R^varphi(U,V)U,U)>=-K}).

\begin{lem}\label{Estimate_sum_wsec}
	Let $(M, g, e^{-\varphi})$ be a weighted Riemannian $n$-manifold. Suppose $|\overline{\sec}_\varphi|  \le K$ and $\overline{\Ric}_{\varphi, k} \ge H$ at some point $p \in M$, where $1 \le k \le n-1$. Then for any orthonormal set $\{e_1,...,e_k\} \subseteq T_pM$ and for any unit vector $v \in T_pM$,
	$$\sum_{i=1}^{k} g\left(R^\varphi(e_i,v)v,e_i\right) \ge -kK(\alpha^2 + 4\alpha\beta) + H\beta^2,$$
	where $\textstyle\alpha = \sqrt{\sum_{i=1}^k g(v,e_i)^2}$ and $\textstyle\beta = \sqrt{1-\sum_{i=1}^k g(v,e_i)^2}$
\end{lem}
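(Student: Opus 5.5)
Decompose $v$ relative to the $k$-plane $P=\Span\{e_1,\dots,e_k\}$ as $v = \alpha u + \beta w$. Here $u\in P$ and $w\perp P$ are unit vectors (if $\alpha=0$ or $\beta=0$ the corresponding term is simply absent), $\alpha = |v_P|$ and $\beta = |v^\perp|$, matching the definitions in the statement.

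Expand $g(R^\varphi(e_i,v)v,e_i)$ bilinearly in the two slots occupied by $v$:
\begin{align*}
g(R^\varphi(e_i,v)v,e_i) =& \alpha^2 g(R^\varphi(e_i,u)u,e_i) + \beta^2 g(R^\varphi(e_i,w)w,e_i)\\
&+ \alpha\beta\big(g(R^\varphi(e_i,u)w,e_i) + g(R^\varphi(e_i,w)u,e_i)\big).
\end{align*}
Since $w\perp e_i$ and $w\perp P$, the set $\{w,e_1,\dots,e_k\}$ is orthonormal. The hypothesis $\wRic_{k,\varphi}\ge H$ therefore gives $\beta^2\sum_i g(R^\varphi(e_i,w)w,e_i)\ge H\beta^2$.

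For the $\alpha^2$ terms, write $u=\sum_j u_j e_j$ with $\sum_j u_j^2=1$. Each term is handled by splitting $u$ into its $e_i$-component and the part of $u$ orthogonal to $e_i$. The component along $e_i$ is killed by part 2 of Lemma \ref{g(R^varphi(U,V)W,U)=g(R^varphi(U,W)V,U)Andg(R^varphi(U,U)V,U)=0}, since $R^\varphi(e_i,e_i)=0$. What remains is $g(R^\varphi(e_i,u_i')u_i',e_i)$ with $u_i'\perp e_i$, which equals $|u_i'|^2\wsec_\varphi(e_i,\hat u_i')$ and is therefore at least $-K$. Summing over $i$ gives the lower bound $-kK\alpha^2$.

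For the cross terms, use the same splitting of $u$. The $e_i$-component contributes
\[
u_i\big(g(R^\varphi(e_i,e_i)w,e_i)+g(R^\varphi(e_i,w)e_i,e_i)\big)=u_i\,g(R^\varphi(e_i,w)e_i,e_i).
\]
The first term vanishes by part 2 of the same lemma. The second has absolute value at most $K$ by Lemma \ref{g(R^varphi(U,V)U,U)>=-K}, because $e_i,w$ are orthonormal.

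The orthogonal part $u_i'$ is perpendicular to $e_i$, and $w\perp e_i$ as well. So part 1 of Lemma \ref{g(R^varphi(U,V)W,U)=g(R^varphi(U,W)V,U)Andg(R^varphi(U,U)V,U)=0} makes the two cross terms equal, giving $2g(R^\varphi(e_i,u_i')w,e_i)$. This is a mixed curvature term, and bounding it by $\wsec$ is the main obstacle. I would polarize: since $e_i\perp u_i', w$ and $u_i'\perp w$, we have
\[
g(R^\varphi(e_i,a+b)(a+b),e_i)-g(R^\varphi(e_i,a-b)(a-b),e_i)=4g(R^\varphi(e_i,a)b,e_i)
\]
for $a=u_i'$, $b=w$, using part 1 of Lemma \ref{g(R^varphi(U,V)W,U)=g(R^varphi(U,W)V,U)Andg(R^varphi(U,U)V,U)=0} for the symmetry. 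Both terms on the left are weighted sectional curvatures scaled by $|a\pm b|^2\le 2$. This yields $|g(R^\varphi(e_i,u_i')w,e_i)|\le K$, up to the normalization constants of $u_i'$.

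Collecting everything, each cross term is bounded by a constant times $\alpha\beta K$. Summing over $i$ with $|u_i|\le 1$ and $|u_i'|\le 1$ gives a total of at most $4kK\alpha\beta$. I expect the main difficulty to be the bookkeeping of these constants so that they come out exactly as $4\alpha\beta$. Combining the three estimates gives $\sum_i g(R^\varphi(e_i,v)v,e_i)\ge -kK(\alpha^2+4\alpha\beta)+H\beta^2$.
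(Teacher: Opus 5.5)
\textbf{There is a genuine gap in your treatment of the $\alpha^2$ terms.} You write $u = u_i e_i + u_i'$ with $u_i'\perp e_i$. The identity $R^\varphi(e_i,e_i)=0$ only removes the $e_i$-component of $u$ from the \emph{second} slot. In the third slot it survives:
\[
g(R^\varphi(e_i,u)u,e_i) = g(R^\varphi(e_i,u_i')u_i',e_i) + u_i\, g(R^\varphi(e_i,u_i')e_i,e_i).
\]
By the paper's formula for $R^\varphi$, the last term equals $u_i\big(\Hess\varphi(u_i',e_i)+d\varphi(u_i')d\varphi(e_i)\big)$, which is generally nonzero. This is exactly the failure of $g(R(U,V)U,U)=0$ that the paper flags before its lemma bounding $|g(R^\varphi(U,V)U,U)|\le K$. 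You account for this type of term in your cross-term analysis, but drop it here.

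It also cannot simply be bounded term by term. That lemma gives the estimate $-\sum_i\big(|u_i'|^2+|u_i||u_i'|\big)K$. At $u_i=1/\sqrt{k}$ this is $-(k-1+\sqrt{k-1})K$, which is worse than $-kK$ once $k\ge 3$. When $\beta=0$ there is no $\alpha\beta$ slack to absorb the excess, so the stated constant does not follow from your argument.

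\textbf{The missing idea is the basis change the paper makes at the outset.} The map $X\mapsto g(R^\varphi(X,v)v,X)$ is a quadratic form, so its trace over $P$ is independent of the orthonormal basis of $P$. You may therefore replace $\{e_i\}$ by an orthonormal basis $\{f_1,\dots,f_k\}$ of $P$ with $f_1=u$.

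\begin{itemize}
\item In the $\alpha^2$ sum, the $i=1$ term is $g(R^\varphi(u,u)u,u)=0$, since $u$ now genuinely sits in the second slot. The remaining terms are $\wsec_\varphi(f_i,u)\ge -K$, giving $-(k-1)K\alpha^2$.
\item Your cross-term analysis then collapses to the paper's. The $i=1$ cross term is $g(R^\varphi(u,w)u,u)\ge -K$. For $i\ge 2$, your polarization $Q(a+b)-Q(a-b)$ applies with $a=u$, $b=w$.
\end{itemize}

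Your polarization gives $|2g(R^\varphi(f_i,u)w,f_i)|\le 2K$. This is slightly sharper than the paper's $4K$, which comes from $Q(a+b)-Q(a)-Q(b)$. Your $\beta^2$ estimate is correct as written.
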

\begin{proof}
	Let $V = \Span\{e_1, ..., e_k\}$, and decompose $v = v_1 + v_2$ with $v_1 \in V$ and $v_2 \perp V$. Then $|v_1| = \alpha^2$ and $|v_2| = \beta^2$.\\
	Choose an orthonormal basis $\{f_1, ..., f_k\}$ for $V$ such that $v_1 = |v_1|f_1$ and let $f_{k+1}$ be a unit vector satisfying $v_2 = |v_2|f_{k+1}$. Then for all $2 \le i \le k$, by Lemma (\ref{g(R^varphi(U,V)W,U)=g(R^varphi(U,W)V,U)Andg(R^varphi(U,U)V,U)=0}) part 1, we have\\
	\begin{equation} \label{When2<=i<=n,commute}
		g(R^\varphi(f_i, f_1)f_{k + 1}, f_i) + g(R^\varphi(f_i, f_{k + 1})f_1, f_i) = 2g(R^\varphi(f_i, f_1)f_{k + 1}, f_i).
	\end{equation}
	Therefore, for $2 \le i \le k$, we have
	\begin{align}
		\notag 2g(R^\varphi(f_i, f_1)f_{k + 1}, f_i) =& g(R^\varphi(f_i, (f_1 + f_{k + 1}))(f_1 + f_{k + 1}), f_i)\\
		&- g(R^\varphi(f_i, f_1)f_1, f_i) - g(R^\varphi(f_i, f_{k + 1})f_{k + 1}, f_i), \label{When2<=i<=n,breakdown}
	\end{align}
	which implies for $2 \le i \le k$,
	$$2|g(R^\varphi(f_i, f_1)f_{k + 1}, f_i)| \le 4K.$$
	When $i = 1$, 
	It follows from Lemma (\ref{g(R^varphi(U,V)W,U)=g(R^varphi(U,W)V,U)Andg(R^varphi(U,U)V,U)=0}) part 2 that
	\begin{equation} \label{g(R^varphi(f1,f1)fk+1,f1)=0}
		g(R^\varphi(f_i, f_1)f_{k + 1}, f_i) = g(R^\varphi(f_1, f_1)f_{k + 1}, f_1) = 0.
	\end{equation} 
	Also, by Lemma \ref{g(R^varphi(U,V)U,U)>=-K}, we know
	\begin{equation}\label{g(R^varphi(f1,fk+1)f1,f1)>=-K}
		g(R^\varphi(f_i, f_{k + 1})f_1, f_i) = g(R^\varphi(f_1, f_{k + 1})f_1, f_1) \ge -K. 
	\end{equation}
	Therefore, combining (\ref{When2<=i<=n,commute}), (\ref{When2<=i<=n,breakdown}), (\ref{g(R^varphi(f1,f1)fk+1,f1)=0}), and (\ref{g(R^varphi(f1,fk+1)f1,f1)>=-K}), we have
	\begin{align*}
		\sum_{i=1}^k g\left(R^\varphi(e_i,v)v,e_i\right) =& \sum_{i=1}^k g\left(R^\varphi(f_i,v)v,f_i\right)\\
		=& |v_1|^2\sum_{i=1}^k g\left(R^\varphi(f_i, f_1)f_1, f_i\right) + |v_1||v_2|\sum_{i=1}^k g\left(R^\varphi(f_i, f_1)f_{k + 1}, f_i\right)\\
		& + |v_1||v_2|\sum_{i=1}^k g\left(R^\varphi(f_i, f_{k + 1})f_1, f_i\right) + |v_2|^2\sum_{i=1}^k g\left(R^\varphi(f_i, f_{k + 1})f_{k + 1}, f_i\right)\\
		=& |v_1|^2\sum_{i=1}^k g\left(R^\varphi(f_i, f_1)f_1, f_i\right) + 2|v_1||v_2|\sum_{i=2}^k g\left(R^\varphi(f_i, f_1)f_{k + 1}, f_i\right)\\
		& + |v_1||v_2|g\left(R^\varphi(f_1, f_{k + 1})f_1, f_1\right) + |v_1||v_2|g\left(R^\varphi(f_1, f_1)f_{k + 1}, f_1\right)\\
		&+ |v_2|^2\sum_{i=1}^k g\left(R^\varphi(f_i, f_{k + 1})f_{k + 1}, f_i\right)\\
		\ge& - (k-1)|v_1|^2K- 4(k-1)|v_1||v_2|K - |v_1||v_2|K + 0 + |v_2|^2H\\
		=& -(k-1)\alpha^2K - (4k-3)\alpha\beta K + H\beta^2\\
		\ge& -k\alpha^2K - 4k\alpha\beta K + H\beta^2\\
		=& -kK(\alpha^2 + 4\alpha\beta) + H\beta^2.
	\end{align*}
\end{proof}

\begin{rmk}
	Even though the sharp bound is $\sum_{i=1}^k g\left(R^\varphi(e_i,v)v,e_i\right) \ge -(k-1)\alpha^2K - (4k-3)\alpha\beta K + H\beta^2$, we choose the smaller lower bound $-kK(\alpha^2 + 4\alpha\beta) + H\beta^2$ to simplify the completing the square process when proving Lemma (\ref{chi_bp_in_Ctilde(k)}).
\end{rmk}

Now we introduce a new growth condition on real-valued functions defined on a Riemannian manifold $M$.

\begin{defn}\label{StrictlySublinearDefn}
	Let $f: M \longrightarrow \R$ be a real valued function on a Riemannian manifold $M$. We say $g$ grows \textit{strictly sublinearly} if there exists some point $p \in M$ such that for all other points $q \in M$, we have
	$$|f(p) - f(q)| < W_1 d(p,q)^j + W_2,$$
	for some $j < 1$ and some constants $W_1 > 0$ and $W_2 \ge 0$.
\end{defn}
If $M = \R$, then in big $O$ notation, $f$ grows strictly sublinearly means
$$f(x) = O(x^c), \text{ for some } 0 \le c < 1.$$

Notice that this condition does not depend on the choice of $p$. For instance, for a fixed point $p' \not= p$, then for all $q \in M$, we must have
\begin{align*}
	|f(p') - f(q)| &= |f(p') - f(p) + f(p) - f(q)|\\
	              &\le |f(p') - f(p)| + |f(p) - f(q)|\\
	              &\le |f(p') - f(p)| + W_1d(p,q)^j + W_2\\
	              &\le W_1d(p,v)^j + W_2'
\end{align*}
where $W_2' = W_2 + |f(p') + f(p)|$. Therefore if $f$ grows strictly sublinearly on $M$, then for any fixed point $p$, we have $|f(p) - f(q)| < W_1d(p,q)^j + W_2$ with $W_1 > 0$ and $W_2 \ge 0$ for all $q \in M$. Please note that this also shows the constants $\alpha$ and $W_1$ do not depend on $p$ and $q$, and it only depends on the function $f$, and the constant $W_2$ only depends on the choice of the fixed point.

With the help of all the preceding lemmas in this subsection, the following lemma immediately implies Theorem \ref{Shen's_Result}.

\begin{lem}\label{chi_bp_in_Ctilde(k)}
	Let $(M, g, e^{-\varphi})$ be a proper open weighted $n$-manifold such that $e^\varphi$ grows strictly sublinearly. Suppose $\wRic_{k,\varphi}(M) \ge 0$ for some $1 \le k \le n-1$ and $\wRic_{k,\varphi} > 0$ outside of a compact subset. Fix some $p \in M$. Then there exists a function $\chi \in C^2(\R)$ such that $\chi \circ b_p$ is a proper function and $\chi \circ b_p$ belongs to $\Ctilde(k)$ on $M$.
\end{lem}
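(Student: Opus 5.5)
We follow Shen's argument in \cite{Shen}, adapting it to $\nablatilde$-geodesics and the second variation formula of Proposition \ref{Second_Variation_Formula}. Fix $q \in M$ and use Lemma \ref{b_p^q,t_supports_b_p}. It gives a ray $\sigma_q$ from $q$ such that $b_p^{q,t}(x) = b_p(q) + t - d(x,\sigma_q(t))$ supports $b_p$ at $q$. As in the proof of Theorem \ref{WuThm1}, if $\chi$ is increasing, then $\chi \circ b_p^{q,t}$ supports $\chi\circ b_p$ at $q$. Hence
$$\sum_i \Ctilde(\chi\circ b_p)(q;e^\varphi X_i) \ge \sum_i \Ctilde(\chi\circ b_p^{q,t})(q;e^\varphi X_i),$$
and it suffices to bound the right-hand side from below by a uniform $\eta>0$, locally in $q$, for $\epsilon$-orthonormal sets $\{X_1,\dots,X_k\}$.

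\textbf{Step 1: second variation estimate.} Let $\gamma=\sigma_q|_{[0,t]}$ and let $X_1,\dots,X_k$ be orthonormal at $q$. For each $i$ we use a variation $\theta_i(s,\tau)=\wexp_{\gamma(\tau)}(s e^{\varphi}W_i(\tau))$. Here $W_i(\tau)=(1-\tau/t)\,(\text{parallel transport of } X_i)$, so the far endpoint $\sigma_q(t)$ is fixed. For $h=\chi\circ b_p^{q,t}$ along the variation we compute
$$h''(0)=\chi''(b)\,(f_i'(0))^2-\chi'(b)f_i''(0),$$
where $f_i$ is the length. The first variation gives $f_i'(0)=-e^{\varphi(q)}\beta_i$, with $\beta_i=g(X_i,\dot\gamma)$. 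Proposition \ref{Second_Variation_Formula}, with the $(1-\tau/t)$ cutoff, yields terms of the form $\int_0^t e^{2\varphi}(1-\tau/t)^2\,g(R^\varphi(X_i,\dot\gamma)\dot\gamma,X_i)\,d\tau$. It also yields gradient terms of size $O\big(t^{-1}\sup_{\gamma}e^{2\varphi}\big)$ and endpoint terms at $q$ involving $d\varphi$, exactly as in \eqref{Use_Nablatilde_and_Nabla_to_Simplify}. We sum over $i$ and apply Lemma \ref{Estimate_sum_wsec} with $v=\dot\gamma$. The curvature integrand is then at least $-kK(\alpha^2+4\alpha\beta)+H\beta^2$, where $\alpha^2=\sum_i\beta_i^2$ measures the component of $\dot\gamma$ in $\Span\{X_i\}$.

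\textbf{Step 2: choice of $\chi$ and absorption.} The $-kK\alpha^2-4kK\alpha\beta$ terms and the $d\varphi$ endpoint terms are all controlled by $\alpha$. We absorb them into $\chi''(b_p(q))\,e^{2\varphi(q)}\alpha^2$ by completing the square, as in \eqref{Complete_the_Square_and_Delete_Positive_Terms}. This requires $\chi''/\chi'$ to be large compared to local bounds of $K$ and $|d\varphi|$, which is possible on compact sets. Since $b_p$ is proper, sublevel sets $\{b_p\le c\}$ are compact. We therefore choose $\chi$ increasing and convex, with $\chi''/\chi'$ exceeding a continuous function of $c$ built from these bounds. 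This also makes $\chi\circ b_p$ proper, since $\chi\to\infty$.

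Where $\wRic_{k,\varphi}\ge0$ only, we will use positivity outside a compact set. The ray $\sigma_q$ eventually leaves the compact set, where $H>0$, so $\int_0^t(1-\tau/t)^2e^{2\varphi}H\,d\tau$ grows at least linearly in $t$ times $\min e^{2\varphi}$ along the ray. This gives the $\beta^2$ term a uniform positive lower bound.

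\textbf{Main obstacle.} The hard part is the error term from $D_\tau W_i$, of size $\int_0^t e^{2\varphi}t^{-2}\,d\tau$, and the factor $e^{2\varphi}$ weighting the curvature. Unlike the unweighted case, $e^{\varphi}$ may grow along the ray. This is where strict sublinear growth enters. On $\sigma_q(\tau)$ we have $e^{\varphi}\le e^{\varphi(q)}+W_1(\tau+d(p,q))^j+W_2$ with $j<1$, so $t^{-2}\int_0^t e^{2\varphi}\,d\tau=O(t^{2j-1})$. For $j<1/2$ this tends to $0$ as $t\to\infty$ and the error can be dropped. For general $j<1$ we would instead rescale the cutoff, replacing $1-\tau/t$ with a profile adapted to $e^{-\varphi}$, so that the error is $O(t^{j-1})\to0$. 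We would then let $t\to\infty$ and conclude that $\sum_i\Ctilde(\chi\circ b_p)(q;e^\varphi X_i)\ge\eta$ for orthonormal sets. Finally, Lemma \ref{Bilinear_form_lemma}, applied to the symmetric form given by this second variation (bounded using local bounds on $|\wsec_\varphi|$ and Lemma \ref{g(R^varphi(U,V)U,U)>=-K}), extends the bound to $\epsilon$-orthonormal sets with constant $\eta/2$. This gives $\chi\circ b_p\in\Ctilde(k)$.
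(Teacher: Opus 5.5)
Your overall architecture matches the paper's: Shen's co-ray $\sigma_q$, the cutoff variation $\wexp_{\sigma_q(\tau)}(s(1-\tau/t)e^{\varphi}X_i)$, Lemma \ref{Estimate_sum_wsec}, completing the square against $\chi''$, then Lemma \ref{Bilinear_form_lemma}. However, your handling of the segment length $t$ has a genuine gap.

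\textbf{The limit $t\to\infty$.} You fix $\chi$ from ``local bounds of $K$'' and then let $t\to\infty$. Those bounds are not local. After Lemma \ref{Estimate_sum_wsec}, the error to be absorbed is $k\alpha(\alpha+4\beta)\int_0^t(1-\tau/t)^2e^{2\varphi}|\wsec_\varphi|\,d\tau$. This sees $|\wsec_\varphi|$ on all of $\sigma_q([0,t])\subset\{b_p(q)\le b_p\le b_p(q)+t\}$, and it is in general unbounded as $t\to\infty$. Making the resulting quadratic form in $(\alpha,\beta)$ positive forces $\chi''/\chi'$ at $b_p(q)$ to be at least of order $(\text{this error})^2/(\text{coefficient of }\beta^2)$. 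So no $\chi$ chosen in advance works for all large $t$.

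\textbf{The $\beta^2$ coefficient.} You also misjudge this term. $\wRic_{k,\varphi}>0$ outside a compact set gives no positive lower bound at infinity, and $e^{2\varphi}$ may decay along the ray. So ``grows at least linearly in $t$'' is unjustified.

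\textbf{What the paper does instead.} No limit is needed, since for each fixed $t$ the function $\chi\circ b_p^{q,t}$ already supports $\chi\circ b_p$. The paper chooses a single length $T(r)$, with $r=b_p(q)$:
\begin{itemize}
\item Because $b_p(\sigma_q(\tau))=r+\tau$, the ray crosses the compact shell $\{r+R_0-a\le b_p\le r+R_0-a+1\}$ for $\tau\in[R_0-a,R_0-a+1]$. Let $H(r)>0$ be the infimum of $\wRic_{k,\varphi}$ there.
\item Requiring $T(r)\ge 2(R_0-a+1)$ gives $(1-\tau/T)^2\ge\frac14$ on that window. This yields a fixed coefficient $\frac14H(r)I$ for $\beta^2$.
\item $T(r)$ is then taken large enough that the cutoff error $\frac{k}{T^2}\int_0^Te^{2\varphi}$ is at most $\frac18H(r)I$.
\item Only at this point is $\Kbar(r)=\sup\{|\wsec_\varphi|: r\le b_p\le r+T(r)\}$ finite, by properness. $Z(r)=\chi''/\chi'$ is then chosen from $H(r)$, $\Kbar(r)$ and $T(r)$.
\end{itemize}

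\textbf{The case $j\ge\frac12$.} Your worry here is legitimate. The paper's proof applies the sublinear bound directly to $e^{2\varphi}$, so it tacitly assumes that $e^{2\varphi}$, not just $e^{\varphi}$, grows strictly sublinearly. But your proposed repair cannot work. Write the normal variation field as $e^{\varphi}uE$, with $E$ parallel, $u(0)=1$ and $u(t)=0$. The cutoff error in each normal direction is then $\int_0^te^{2\varphi}u'^2\,d\tau\ge\bigl(\int_0^te^{-2\varphi}\,d\tau\bigr)^{-1}$ by Cauchy--Schwarz. The hypothesis allows $e^{\varphi(\sigma_q(\tau))}\ge c\,\tau^{j}$ with $j>\frac12$, and then $\int_0^\infty e^{-2\varphi}\,d\tau<\infty$. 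So the error stays bounded below uniformly in the profile $u$ and in $t$, rather than being $O(t^{j-1})$. Since $\wRic_{k,\varphi}$ is only assumed positive, the curvature contribution can be smaller than this bound. Hence neither your argument nor the paper's closes for $j>\frac12$. The method needs the growth condition on $e^{2\varphi}$, which essentially means exponent $j<\frac12$ for $e^{\varphi}$.
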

\begin{proof}
	Let $a = \min_{x \in M}b_p(x)$. Choose $R_0 \ge a$ such that $\wRic_{k,\varphi} > 0$ in $\{y \in M \; | \; b_p(y) \ge R_0\}$. For $r \ge a$, define
	\begin{equation}
		H(r) = \inf\{\wRic_{k,\varphi}(y) \; | \; r + R_0 - a \le b_p(y) \le r + R_0 - a + 1\} > 0. \label{Defn_of_H(r)}
	\end{equation}
	Set
	\begin{equation}
		T(r) = \max\left\{\left(\dfrac{8k(W_1 + W_2(j+1))}{(j+1)H(r)I}\right)^{\frac{1}{1-j}}, 2(R_0 - a + 1)\right\}, \label{Defn_of_T(r)}
	\end{equation}
	where $j$, $W_1$, $W_2$ and $I$ are a positive constants to be specified later. Define
	\begin{equation}
		\Kbar(r) = \sup\{|\wsec_\varphi(y)| \; | \; r \le b_p(y) \le r + T(r)\}. \label{Defn_of_Kbar(r)}
	\end{equation}
	Let $Z : [a,\infty) \longrightarrow \R^+$ be a positive continuous function to be determined later, and set
	$$\chi(t) = \int_a^t \exp\left(\int_a^s Z(\tau) \; d\tau\right) \; ds + a.$$
	Then $\chi \in C^2$ and satisfies
	\begin{itemize}
		\item $\chi'(r) \ge 1$ for all $r \in [a, \infty)$.
		\item $\chi''(r) = Z(r)\chi'(r)$ for all $r \in [a,\infty)$.
	\end{itemize}
	Let $q \in M$ with $b_p(q) = r$, and let $\sigma(t)$ be a $\nabla$-ray from $q$. For $V \in T_qM$, Let $V(t)$ be its $\nabla$-parallel transport along $\sigma_q(t)$ with $V(0) = V$.\\
	For all $t \ge 0$ define a $\nablatilde$-geodesic $\theta_{V(t)} : [0,T(r)] \longrightarrow M$ with
	$$\theta_{V(t)}(0) = \left(1-\dfrac{t}{T(r)}\right)e^{\varphi(\sigma_q(t))}V(t).$$
	Define $f: [0,1] \longrightarrow M$ on $\theta_{V(0)}(s)$ as
	\begin{equation}
		f(s) = r + T(r) - \int_0^{T(r)} \left|\dfrac{\partial \theta_{V(t)}}{\partial t}(s)\right| \; dt.
	\end{equation}
	At $q$, we have $s=0$, so $\dfrac{\partial \theta_{V(t)}}{\partial t}(0) = \sigmadot_q(t)$, which means
	\begin{align*}
		f(0) &= r + T(r) - \int_0^{T(r)} \left|\sigmadot_q(t)\right| \; dt\\
		     &= r + T(r) - T(r)\\
		     &= r\\
		     &= b_p(q).
	\end{align*}
	By Lemma \ref{b_p^q,t_supports_b_p}, we know $b_p^{T(r)}(x)$ supports $b_p$ at $q$, which means for $x \not= q$, we have $s = s_x$, for some $0 < s_x \le 1$,
	\begin{align*}
		b_p(x) &\ge b_p^{T(r)}(x)\\
		       &= r + T(r) - d(x,\sigma_q(T_r))\\
		       &\ge r + T(r) - \int_0^{T(r)} \left|\dfrac{\partial \theta_{V(t)}}{\partial t}(s_x)\right| \; dt\\
		       &= f(s_x).
	\end{align*}
	This is true because $\int_0^{T(r)} \left|\dfrac{\partial \theta_{V(t)}}{\partial t}(s_x)\right|$ is the arc length of some segment from $x$ to $\sigma_q(T(r))$, which is always greater than or equal to the true distance $d(x,\sigma_q(T_r))$.
	Hence, this proves $f$ supports $b_p$ at $q$.\\
%
	Let $\{e_1,...,e_j\} \subseteq T_qM$ be an orthonormal set. Let $\alpha,\beta > 0$ with $\alpha^2 + \beta^2 = 1$ where
	$$\alpha^2 = \sum_{i = 1}^k g\left(e_i,\sigmadot_q(0)\right)^2.$$
	From the first variation formula,
	\begin{align}
	\notag Z(r)\sum_{i = 1}^k |e^\varphi e_jf|^2 =& Z(r)\sum_{i = 1}^k |d(f \circ \wexp_{\sigma(t)})(e_j)|^2\\
		\notag =& Z(r)\sum_{i = 1}^k \left|\dfrac{d}{ds}\bigg|_{s=0} - \int_0^{T(r)} \left|\dfrac{\partial \theta_{e_i(t)}}{\partial t}(s)\right| \, dt\right|^2\\ 
		\notag =& Z(r)\sum_{i = 1}^k \left|g(\dot{\sigma}_q(0),e^\varphi e_i)\right|^2\\ 
		=& Z(r)\alpha^2e^{2\varphi} \label{Z(r)sum_of|first_derivatives|^2_result}.
	\end{align}
	By the second variation formula from Proposition \ref{Second_Variation_Formula},
	\begin{align}
		\notag &\sum_{i = 1}^k \Hess_{\gtilde} f(e_i, e_i)\\
		\notag =& \sum_{i = 1}^k \dfrac{d^2}{ds^2}\Bigg|_{s=0} - \int_0^{T(r)} \left|\dfrac{\partial \theta_{e_i}}{\partial s}(t,s)\right| \, dt\\
		\notag =& \sum_{i = 1}^k \int_0^{T(r)} -\left|-\dfrac{1}{T(r)}e^\varphi e_i + \left(1 - \dfrac{t}{T(r)}\right)d\varphi(\sigmadot)e^\varphi e_i - d\varphi(\sigmadot)\left(1 - \dfrac{t}{T(r)}\right)e^\varphi e_i\right|^2 \; dt\\
		\notag &+ \sum_{i = 1}^k \int_0^{T(r)} g\left(R^\varphi\left(\left(1 - \dfrac{t}{T(r)}\right)e^\varphi e_i, \sigmadot\right) \sigmadot, \left(1 - \dfrac{t}{T(r)}\right)e^\varphi e_i\right) \; dt\\
		\notag &+\sum_{i = 1}^k \int_0^{T(r)}g\left(D_t\left(\left(1 - \dfrac{t}{T(r)}\right)e^\varphi e_i\right), \sigmadot\right)^2 \; dt\\
		&+ \sum_{i = 1}^k d\varphi(\sigmadot(0))e^{2\varphi(q)} + \sum_{i = 1}^k g\left(\nabla_{e^{\varphi(q)} e_i}e^{\varphi(q)} e_i, \sigmadot(0) \right). \label{After_Use_2nd_Var_Formula_wRic_k}
	\end{align}
	Since $e^\varphi$ grows strictly sublinearly, we know there exists some constants $W_1 > 0$, $W_2 \ge 0$, and $0 < j < 1$ such that
	\begin{align*}
		|e^{2\varphi(\sigma(t))} - e^{2\varphi(\sigma(0))}| &\le W_1t^j + W_2;\\
		|e^{2\varphi(\sigma(t))}| - |e^{2\varphi(\sigma(0))}|&\le W_1t^j + W_2;\\
		|e^{2\varphi(\sigma(t))}| &\le W_1t^j + W_2 + |e^{2\varphi(\sigma(0))}|,
	\end{align*}
	for all $t \ge 0$. Therefore without loss of generality, we can assume $W_2$ to be large enough so that
	\begin{equation}
		|e^{2\varphi(\sigma(t))}| \le W_1t^j + W_2 + |e^{2\varphi(\sigma(0))}|, \label{Sublinear_inequality}
	\end{equation}
	for all $t \ge 0$.\\
	By the definition of $T(r)$ in (\ref{Defn_of_T(r)}) and (\ref{Sublinear_inequality}), we have
	\begin{align}
		\notag &\sum_{i = 1}^k \int_0^{T(r)} -\left|-\dfrac{1}{T(r)}e^\varphi e_i + \left(1 - \dfrac{t}{T(r)}\right)d\varphi(\sigmadot)e^\varphi e_i - d\varphi(\sigmadot)\left(1 - \dfrac{t}{T(r)}\right)e^\varphi e_i\right|^2 \; dt\\
		\notag =& \sum_{i = 1}^k -\int_0^{T(r)} \dfrac{1}{T(r)^2}e^{2\varphi} \; dt\\
		\notag =& -\dfrac{k}{T(r)^2} \int_0^{T(r)} e^{2\varphi} \; dt\\
		\notag \ge& -\dfrac{k}{T(r)^2} \int_0^{T(r)} W_1t^j + W_2 \; dt\\
		\notag = & -\dfrac{k}{T(r)^2} \left(\dfrac{W_1T(r)^{j+1}}{j+1} + W_2T(r)\right)\\
		\notag = & -\left(\dfrac{kW_1T(r)^{j+1}}{T(r)^2(j+1)} + \dfrac{kW_2}{T(r)}\right)\\
		\notag = & -\left(\dfrac{kW_1}{T(r)^{1-j}(j+1)} + \dfrac{kW_2}{T(r)}\right)\\
		\notag \ge & -\left(\dfrac{kW_1}{T(r)^{1-j}(j+1)} + \dfrac{kW_2(j+1)}{T(r)^{1-j}(j+1)}\right)\\
		\notag = & -\dfrac{1}{T(r)^{1-j}}\left(\dfrac{kW_1+kW_2(j+1)}{j+1}\right)\\
		=& -\dfrac{1}{8}H(r)I, \label{Integral_of_||^2=-H(r)I(r)/W(r)}
	\end{align}
	By Lemma \ref{Estimate_sum_wsec}, we have
	\begin{align}
		\notag &\sum_{i = 1}^k \int_0^{T(r)} g\left(R^\varphi\left(\left(1 - \dfrac{t}{T(r)}\right)e^\varphi e_i, \sigmadot\right) \sigmadot, \left(1 - \dfrac{t}{T(r)}\right)e^\varphi e_i\right) \; dt\\
		\notag =& \sum_{i = 1}^k \int_0^{T(r)} \left(1 - \dfrac{t}{T(r)}\right)^2e^{2\varphi} g(R^\varphi( e_i, \sigmadot)\sigmadot, e_i) \; dt\\
		\notag \ge& -k \int_0^{T(r)} \left(1 - \dfrac{t}{T(r)}\right)^2e^{2\varphi} \max|\wsec_\varphi(\sigma(t))|\left(\alpha^2 + 4\alpha\beta\right) \; dt\\
		&+ \int_0^{T(r)} \left(1 - \dfrac{t}{T(r)}\right)^2e^{2\varphi} \min\left(\wRic_{k,\varphi}(\sigma(t))\right)\beta^2 \; dt. \label{Sum_wsec_estimates_using_wsec_and_wRic}
	\end{align}
	By (\ref{Sublinear_inequality}) and the definition of $\Kbar(r)$ in (\ref{Defn_of_Kbar(r)}), we have
	\begin{align}
		\notag &-k \int_0^{T(r)} \left(1 - \dfrac{t}{T(r)}\right)^2e^{2\varphi} \max|\wsec_\varphi(\sigma(t))|\left(\alpha^2 + 4\alpha\beta\right) \; dt\\
		\notag \ge& -k\Kbar(r)\left(\alpha^2 + 4\alpha\beta\right) \int_0^{T(r)} \left(1 - \dfrac{t}{T(r)}\right)^2e^{2\varphi} \; dt\\
		\notag \ge& -k\Kbar(r)\left(\alpha^2 + 4\alpha\beta\right) \int_0^{T(r)} \left(1 - \dfrac{t}{T(r)}\right)^2(W_1t^j + W_2) \; dt\\
		=& -k\Kbar(r)\left(\alpha^2 + 4\alpha\beta\right) \left(\dfrac{2W_1T(r)^{j+1}}{(j+1)(j+2)(j+3)} + \dfrac{W_2T(r)}{3}\right). \label{wsec_integral_estimates}
	\end{align}
	By the definition of $H(r)$ in (\ref{Defn_of_H(r)}) and $T(r)$ in (\ref{Defn_of_T(r)}), we have
	\begin{align}
		\notag &\int_0^{T(r)} \left(1 - \dfrac{t}{T(r)}\right)^2e^{2\varphi} \min\left(\wRic_{k,\varphi}(\sigma(t))\right)\beta^2 \; dt\\
		\notag \ge& H(r)\beta^2 \int_{R_0 - a}^{R_0 - a + 1} \left(1 - \dfrac{t}{T(r)}\right)^2 e^{2\varphi} \; dt\\
		\ge& H(r)\beta^2 \int_{R_0 - a}^{R_0 - a + 1} \left(1 - \dfrac{t}{2(R_0 - a + 1)}\right)^2 e^{2\varphi} \; dt. \label{(1-t/T)^2_ge_(1-t/2(R_0-a+1))^2}
	\end{align}
	Note that $\left(1 - \dfrac{t}{2(R_0 - a + 1)}\right)^2$ is decreasing on $[0,2(R_0 - a + 1)]$, we know
	\begin{equation}
		\left(1 - \dfrac{t}{2(R_0 - a + 1)}\right)^2 \ge \left(1 - \dfrac{R_0 - a + 1}{2(R_0 - a + 1)}\right)^2 = \dfrac{1}{4}. \label{(1-t/2(R_0-a+1))^2_bounded_below_by_1/4}
	\end{equation}
	Combining (\ref{(1-t/T)^2_ge_(1-t/2(R_0-a+1))^2}) and (\ref{(1-t/2(R_0-a+1))^2_bounded_below_by_1/4}), and by the fact that $e^{2\varphi} > 0$, we know
	\begin{align}
		\notag \int_0^{T(r)} \left(1 - \dfrac{t}{T(r)}\right)^2e^{2\varphi} \min\left(\wRic_{k,\varphi}(\sigma(t))\right)\beta^2 \; dt &\ge \dfrac{1}{4}H(r)\beta^2 \int_{R_0 - a}^{R_0 - a + 1} e^{2\varphi} \; dt\\
		&= \dfrac{1}{4}H(r)\beta^2I, \label{wRic_k_integral_estimates}
	\end{align}
	where $I = \int_{R_0 - a}^{R_0 - a + 1} e^{2\varphi} \; dt$.\\
	Using the relationship between $\nabla$ and $\nablatilde$ we have
	\begin{align}
		\notag \sum_{i = 1}^k d\varphi(\sigmadot(0))e^{2\varphi(q)} + \sum_{i = 1}^k g\left(\nabla_{e^{\varphi(q)} e_i}e^{\varphi(q)} e_i, \sigmadot(0) \right) =& \sum_{i = 1}^k d\varphi(\sigmadot(0))e^{2\varphi(q)} + \sum_{i = 1}^k 2e^{2\varphi(q)}d\varphi(e_i)\alpha\\
		\notag&- \sum_{i = 1}^k d\varphi(\sigmadot(0))e^{2\varphi(q)}\\
		=& 2e^{2\varphi(q)}\alpha \sum_{i = 1}^k d\varphi(e_i)|_q. \label{Sum_of_derivatives_estimates}
	\end{align}
	Since $\sum_{i = 1}^k \int_0^{T(r)}g\left(D_t\left(\left(1 - \dfrac{t}{T(r)}\right)e^\varphi e_i\right), \sigmadot\right)^2 \; dt \ge 0$, we will discard it. Summarizing from above, by (\ref{After_Use_2nd_Var_Formula_wRic_k}), (\ref{Integral_of_||^2=-H(r)I(r)/W(r)}), (\ref{Sum_wsec_estimates_using_wsec_and_wRic}), (\ref{wsec_integral_estimates}), (\ref{wRic_k_integral_estimates}), and \ref{Sum_of_derivatives_estimates}, we obtain a lower bound for $\sum_{i = 1}^k \Hess_{\gtilde} f(e_i, e_i)$ as
	\begin{align*}
		\sum_{i = 1}^k \Hess_{\gtilde} f(e_i, e_i) \ge& -\dfrac{1}{8}H(r)I - k\Kbar(r)\left(\alpha^2 + 4\alpha\beta\right) \left(\dfrac{2W_1T(r)^{j+1}}{(j+1)(j+2)(j+3)} + \dfrac{W_2T(r)}{3}\right)\\
		& + \dfrac{1}{4}H(r)\beta^2I + 2e^{2\varphi(q)}\alpha\sum_{i = 1}^kd\varphi(e_i)|_q.
	\end{align*}
	Note that $2e^{2\varphi(q)}\alpha\sum_{i = 1}^k d\varphi(e_i)|_q$ is always bounded since each $d\varphi(e_i)|_q$ is bounded. We can denote its lower bound by $2e^{2\varphi(q)}\alpha P(r)$ for some $P(r) \in \R$. Also, for simplicity, denote $F(r) = k\Kbar(r)\left(\dfrac{2W_1T(r)^{j+1}}{(j+1)(j+2)(j+3)} + \dfrac{W_2T(r)}{3}\right)$. Combining results from above, we get
	\begin{align}
		\notag &Z(r)\sum_{i = 1}^k |e_jf|^2 + \sum_{i = 1}^k \Hess_{\gtilde} f(e_i, e_i)\\
		\notag \ge& Z(r)\alpha^2e^{2\varphi(q)} - \dfrac{1}{8}H(r)I - F(r)\left(\alpha^2 + 4\alpha\beta\right) + \dfrac{1}{4}H(r)\beta^2I + 2e^{2\varphi(q)}\alpha P(r)\\
		\notag =& Z(r)\alpha^2e^{2\varphi(q)} - \dfrac{1}{8}H(r)I - F(r)\left(\alpha^2 + 4\alpha\beta\right) + \dfrac{1}{4}H(r)I - \dfrac{1}{4}H(r)\alpha^2I + 2e^{2\varphi(q)}\alpha P(r)\\
		\notag \ge& \left(Z(r)e^{2\varphi(q)} - F(r) - \dfrac{1}{4}H(r)I\right)\alpha^2 - 2\alpha(2\beta F(r) + e^{2\varphi(q)}P(r)) + \dfrac{1}{4}H(r)I - \dfrac{1}{8}H(r)I\\
		\notag =& \left(\alpha\sqrt{Z(r)e^{2\varphi(q)} - F(r) - \dfrac{1}{4}H(r)I} - \dfrac{2\beta F(r) + e^{2\varphi(q)}P(r)}{\sqrt{Z(r)e^{2\varphi(q)} - F(r) - \frac{1}{4}H(r)I}}\right)^2\\
		&- \left(\dfrac{2\beta F(r) + e^{2\varphi(q)}P(r)}{\sqrt{Z(r)e^{2\varphi(q)} - F(r) - \frac{1}{4}H(r)I}}\right)^2 + \dfrac{1}{4}H(r)I - \dfrac{1}{8}H(r)I \label{Complete_the_square}\\
		\ge& - \left(\dfrac{2\beta F(r) + e^{2\varphi(q)}P(r)}{\sqrt{Z(r)e^{2\varphi(q)} - F(r) - \frac{1}{4}H(r)I}}\right)^2 + \dfrac{1}{8}H(r)I,
	\end{align}
	where step (\ref{Complete_the_square}) is done by completing the square.\\
	Notice that $Z(r)$ only depends on $H(r), \Kbar(r)$, and $T(r)$. After choosing $Z(r)$ and $W(r)$ sufficiently large, one obtains
	$$- \left(\dfrac{2\beta F(r) + e^{2\varphi(q)}P(r)}{\sqrt{Z(r)e^{2\varphi(q)} - F(r) - \frac{1}{4}H(r)I}}\right)^2 + \dfrac{1}{8}H(r)I \ge \dfrac{1}{16}H(r)I.$$
	Since $f$ supports $b_p$ at $q$, it follows that
	\begin{align*}
		\Hess_{\gtilde}(\chi \circ  f)(e^{\varphi(q)} e_i, e^{\varphi(q)} e_i) &= \left(Z(r)\sum_{i = 1}^k |e^\varphi e_i f|^2 + \sum_{i = 1}^k \Hess_{\gtilde} f (e^\varphi e_i, e^\varphi e_i)\right)(\chi' \circ f)(q);\\
		e^{2\varphi(q)}\Hess_{\gtilde}(\chi \circ  f)(e_i, e_i) &\ge \dfrac{1}{16}H(r)I(\chi' \circ b_p)(q);\\
		\Hess_{\gtilde}(\chi \circ  f)(e_i, e_i) &\ge \dfrac{1}{16}H(r)Ie^{-2\varphi(q)}\chi'(r).
	\end{align*}
	Since $(\chi \circ f)''$ is bounded on the compact set $\overline{B}_q(T(r))$, we can also define a continuous function $A(r) \ge \dfrac{\Hess_{\gtilde}(\chi \circ f)(e^\varphi v, e^\varphi v)}{e^{2\varphi}|v|^2} = \Hess_{\gtilde}(\chi \circ f)\left(e_v, e_v\right)$, where $e_v$ is the unit vector pointing to the direction of $v$. Then for all $v \in T_qM$,
	$$|\Hess_{\gtilde}(\chi \circ f)(v, v)| \le A(r)|v|^2.$$
	By Lemma \ref{Bilinear_form_lemma}, there exists some continuous function $\epsilon(r) > 0$ that only depends on $H(r)$ and $A(r)$ such that for all $\{v_1, ..., v_k\} \subseteq T_qM$ with $|g(v_i, v_j) - \delta_{ij}| < \epsilon(r)$, we have
	$$\sum_{i = 1}^k \Hess_{\gtilde}(\chi \circ  f)(v_i, v_i) \ge \dfrac{1}{32}H(r)Ie^{-2\varphi(q)}.$$
	Since $\chi \circ f$ supports $\chi \circ b_p$ at $q$, we know
	$$\sum_{i = 1}^k \Ctilde(\chi \circ  f)(q; v_i) \ge \dfrac{1}{32}H(r)Ie^{-2\varphi(q)},$$
	and hence $\chi \circ b_p \in \Ctilde(k)$.
\end{proof}
This immediately yields the result in Theorem \ref{Shen's_Result} as follows.
\begin{proof}[Proof of Theorem \ref{Shen's_Result}]
	The conclusion follows from Lemma \ref{chi_bp_in_Ctilde(k)} and Lemma \ref{Smoothing_with_proper_Morse_function}. Together, they imply the existence of a proper Morse function $F \in \Ctilde(k)$ such that $|F - \chi \circ b_p| < \epsilon$ for some $\epsilon > 0$. By Lemma \ref{chi_bp_in_Ctilde(k)}, the index of $F$ is at most $k-1$. The result then follows from in \cite[Theorem 3.5.]{Milnor2}.
\end{proof}
As a special case of the above result with $k = n-1$, we obtain the following corollary:
\begin{coro}\label{CoroFromShen}
	Let $(M,g,e^{-\varphi})$ be a proper open weighed Riemannian manifold such that $e^\varphi$ grows strictly sublinearly. Suppose $\Ric_{(n-1)\varphi}^{1-n}(M) \ge 0$ and $\Ric_{(n-1)\varphi}^{1-n}(M) > 0$ outside of a compact set. Then
	$$H_{n-1}(M; \Z) = H_n(M; \Z) = 0.$$
\end{coro}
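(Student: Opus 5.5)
The plan is to specialize Theorem \ref{Shen's_Result} to $k = n-1$ and then identify the curvature hypothesis. The key algebraic fact is the identity from the introduction, applied with $\varphi$ in place of $\frac{\varphi}{n-1}$. For a unit vector $v$ completed to an orthonormal basis $\{v,e_1,\dots,e_{n-1}\}$,
$$\sum_{i=1}^{n-1}\wsec_\varphi(e_i,v) = \Ric(v,v) + (n-1)\Hess\varphi(v,v) + (n-1)d\varphi(v)^2 = \Ric^{1-n}_{(n-1)\varphi}(v,v).$$

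When $k = n-1$, an orthonormal set $\{v,e_1,\dots,e_{n-1}\}$ of $n$ vectors is a full orthonormal basis. So the $(n-1)$-plane $P = \Span\{e_i\}$ is exactly $v^\perp$, and by \eqref{wRicDefnUsingPlanes},
$$\wRic_{n-1,\varphi}(v,P) = \Ric(v,v) + (n-1)\Hess\varphi(v,v) + (n-1)d\varphi(v)^2 = \Ric^{1-n}_{(n-1)\varphi}(v,v).$$
Consequently $\Ric^{1-n}_{(n-1)\varphi}\ge 0$ everywhere is equivalent to $\wRic_{n-1,\varphi}(M)\ge 0$. Likewise, positivity of $\Ric^{1-n}_{(n-1)\varphi}$ on unit vectors outside a compact set $K$ is equivalent to $\wRic_{n-1,\varphi}>0$ outside $K$.

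If one reads the strict condition as a uniform positive lower bound, nothing changes: the proof of Lemma \ref{chi_bp_in_Ctilde(k)} only uses positive infima $H(r)$ over the compact shells $\{r+R_0-a\le b_p\le r+R_0-a+1\}$. These infima are positive by continuity and compactness of the unit sphere bundle over each shell, since $b_p$ is proper.

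The remaining hypotheses of Theorem \ref{Shen's_Result} are that $M$ is proper open and that $e^\varphi$ grows strictly sublinearly. Both are assumed verbatim. Also $k = n-1$ lies in the allowed range $1\le k\le n-1$. Theorem \ref{Shen's_Result} therefore gives $H_i(M;\Z)=0$ for all $i\ge n-1$, and in particular for $i = n-1$ and $i = n$.

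The only point needing care is the bookkeeping between weights: $\wRic_{k,\varphi}$ for weight $\varphi$ corresponds to the Bakry--Émery tensor for weight $(n-1)\varphi$. This is exactly the normalization in the statement, so the hypotheses match without rescaling $\varphi$. That also keeps the sublinear-growth condition on the same function $e^\varphi$.
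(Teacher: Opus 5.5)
Your proposal is correct and follows the paper's own route: the paper obtains this corollary by specializing Theorem \ref{Shen's_Result} to $k=n-1$. In that case the orthonormal set $\{v,e_1,\dots,e_{n-1}\}$ is a full basis, and the introduction's identity $\sum_{i=1}^{n-1}\wsec_\varphi(e_i,v)=\Ric^{1-n}_{(n-1)\varphi}(v,v)$ turns the hypotheses into those of the theorem, which then gives $H_i(M;\Z)=0$ for all $i\ge n-1$.
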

This result can be compared with Theorem \ref{AliceLim'sResult}. It replaces the boundedness assumption on $\varphi$ with a growth condition on $e^\varphi$, which roughly corresponds to logarithmic growth of $\varphi$. This also comes at the additional cost of assuming that $M$ is proper.
\section{Compact Manifolds with $\wRic_{k,\varphi}(M) > 0$ and Weighted Convex Boundary}\label{CompactManifoldsWithwRickBounds}
To establish Theorem \ref{WuThm2}, we use the following lemma from \cite{Wu}:
\begin{lem}\label{Ctidle(k)Lemma}
	A locally Lipschitz function $f \in \Ctilde(k)$ if for any $x_0 \in M$, there exists a neighborhood $B$ of $x_0$ such that for each $x \in W$, there exists some $C^\infty$ function $g_x$ supporting $f$ at $x$ which has the following two properties:
	\begin{enumerate}
		\item For some $\eta > 0$, $\sum_{j = 1}^k \Hess_{\gtilde} g_x (e^\varphi X_i,e^\varphi X_i) \ge \eta$, for every orthonormal set $\{e^\varphi X_1, ..., e^\varphi X_k\}$ in $T_xM$.
		\item There exists some constants $A_1, A_2 > 0$ such that $-A_1 \le \Hess_{\gtilde} g_x (e^\varphi X, e^\varphi X) \le A_2$ for all $x \in B$ and for all orthonormal vectors $e^\varphi X \in T_xM$.
	\end{enumerate}
\end{lem}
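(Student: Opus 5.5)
The plan is to verify the defining property of $\Ctilde(k)$ directly. The regularity requirement is already given: $f$ is continuous and Lipschitz on compact sets by hypothesis. Fix $x_0\in M$ and let $B$ be the neighborhood furnished by the hypothesis (the $W$ in the statement should read $B$). For each $x\in B$ we have a smooth supporting function $g_x$. The argument has three steps: compare $\Ctilde f$ with the Hessian of $g_x$ pointwise, upgrade the orthonormal estimate to an $\epsilon$-orthonormal one using Lemma \ref{Bilinear_form_lemma}, and check that the resulting $\epsilon$ and $\eta$ do not depend on $x\in B$.

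\emph{Step 1 (support comparison).} Let $\gammatilde$ be the $\nablatilde$-geodesic with $\gammatilde(0)=x$ and $\dot\gammatilde(0)=e^\varphi X$. Since $g_x(x)=f(x)$ and $g_x\le f$ near $x$, for all small $r$ we have
$$f(\gammatilde(r))+f(\gammatilde(-r))-2f(x)\ \ge\ g_x(\gammatilde(r))+g_x(\gammatilde(-r))-2g_x(x).$$
Divide by $r^2$ and take $\liminf$. Because $g_x$ is $C^2$, the right side converges to $\Hess_{\gtilde} g_x(e^\varphi X,e^\varphi X)$, as noted after (\ref{Ctilde(x;e^varphiX)Definition}). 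Hence
$$\Ctilde f(x;e^\varphi X)\ge \Hess_{\gtilde} g_x(e^\varphi X,e^\varphi X)$$
for every $X\in T_xM$. This is the same support argument already used in the proofs of Theorem \ref{WuThm1} and Lemma \ref{chi_bp_in_Ctilde(k)}.

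\emph{Step 2 (from orthonormal to $\epsilon$-orthonormal).} At each $x\in B$, apply Lemma \ref{Bilinear_form_lemma} on the inner product space $(T_xM,\gtilde_x)$ with $S=\Hess_{\gtilde} g_x|_x$. Hypothesis (1) of the present lemma says $\sum_{i=1}^k S(u_i,u_i)\ge\eta$ for every $\gtilde$-orthonormal set $\{u_i\}$. Hypothesis (2) gives $-A_1\le S(u,u)\le A_2$ for $\gtilde$-unit $u$. By homogeneity, $|S(v,v)|\le A\,\gtilde(v,v)$ with $A=\max(A_1,A_2)$. Lemma \ref{Bilinear_form_lemma} then yields $\epsilon>0$, depending only on $k,\eta,A$, such that
$$\sum_{i=1}^k S(e^\varphi X_i,e^\varphi X_i)\ge \eta/2$$
whenever $|\gtilde(e^\varphi X_i,e^\varphi X_j)-\delta_{ij}|<\epsilon$. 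This is exactly the paper's notion of an $\epsilon$-orthonormal set $\{X_1,\dots,X_k\}$.

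\emph{Step 3 (conclusion).} Summing the inequality of Step 1 over $i$ gives, for every $x\in B$ and every $\epsilon$-orthonormal $\{X_i\}\subseteq T_xM$,
$$\sum_{i=1}^k\Ctilde f(x;e^\varphi X_i)\ \ge\ \sum_{i=1}^k\Hess_{\gtilde} g_x(e^\varphi X_i,e^\varphi X_i)\ \ge\ \eta/2 .$$
This is the defining property of $\Ctilde(k)$ with neighborhood $B$ and constants $\epsilon$ and $\eta/2$. Note that the terms $\Ctilde f(x;e^\varphi X_i)$ are summed individually, so no $\liminf$ of a sum is needed; values of $+\infty$ cause no harm.

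The main obstacle is uniformity in $x$. The definition of $\Ctilde(k)$ requires a single $\epsilon$ and a single $\eta$ valid on the whole neighborhood, while the supporting functions $g_x$ change with $x$. This is exactly why hypothesis (2) demands constants $A_1,A_2$ independent of $x\in B$, and why $\eta$ in (1) must also be independent of $x$; I would read (1) that way. Given this, the $\epsilon$ produced by Lemma \ref{Bilinear_form_lemma} is uniform because it depends only on $(k,\eta,A)$. A secondary point is to keep the normalizations consistent. Orthonormality and the Hessian bounds are taken with respect to $\gtilde$, and unit vectors have the form $e^\varphi X$. Lemma \ref{Bilinear_form_lemma} must therefore be applied to $\gtilde$ rather than to $g$.
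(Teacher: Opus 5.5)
Your proof is correct. The paper gives no proof of this lemma: it quotes it from \cite{Wu} without comment. It tacitly relies on the principle it states for Lemma \ref{Smoothing_Thm_for_Ctilde(k)}, that passing from $\nabla$-geodesics to $\nablatilde$-geodesics changes nothing. What you have written is the argument behind that citation, made explicit. First comes the support inequality $\Ctilde f(x;e^\varphi X)\ge \Hess_{\gtilde} g_x(e^\varphi X,e^\varphi X)$, which uses only that $\gammatilde$ is a $\gtilde$-geodesic. Then comes the frame-perturbation Lemma \ref{Bilinear_form_lemma}, applied in $(T_xM,\gtilde_x)$.

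Writing it out brings out two things the citation leaves hidden.

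\emph{Normalizations.} Since $\gtilde(e^\varphi X_i,e^\varphi X_j)=g(X_i,X_j)$, the $\gtilde$-orthonormal frames in hypothesis (1) and the $\epsilon$-orthonormal sets in the definition of $\Ctilde(k)$ are exactly the objects Lemma \ref{Bilinear_form_lemma} handles.

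\emph{Uniformity of $\eta$.} Your insistence that $\eta$ be independent of $x\in B$ is necessary, not just a convenient reading. With a pointwise $\eta(x)$ the statement is false. Take a $C^{1,1}$ function on $\R$ with $f''=1$ except on very short intervals about points $x_n\to 0$, where $f''=1/n$. It admits quadratic supporting functions with $0<g_x''(x)\le 1$ at every point, including one with $g_0''(0)=1/2$ at $0$. Yet it is not in $\Ctilde(1)$ near $0$, because the lower second derivative at $x_n$ is $1/n$.

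One minor elision: Lipschitz continuity on compact sets is not literally part of the hypothesis. It follows from local Lipschitz continuity by a standard compactness argument.
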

%
With minor modifications to the arguments in Theorem \ref{WuThm1} and Theorem \ref{Shen's_Result}, we can prove Theorem \ref{WuThm2} as follows.
\begin{proof}[Proof to Theorem \ref{WuThm2}:]
	Let $x \in M$ and let $\rho(y)$ denote the distance from $x$ to $y \in \partial M$, and let $\rho_0$ be the maximal distance to the boundary. Let $B$ be a geodesic ball centered at $x$ such that $B \subseteq \rho^{-1}([\rho_1,\rho_2])$, for some $0<\rho_1<\rho_2<\rho_0$.\\
	Let $x \in B$ and $y \in \partial M = N$. Let $\gamma : [0,b] \longrightarrow M$ be a minimizing normal $\nabla$-geodesic from $x$ to $y$. Let $\{E_1, ..., E_k\} \subseteq T_xM$ be $g$-orthonormal vectors. For every $1 \le j \le k$, decompose $E_j = \alpha_j E_j^\perp + \beta_j \gammadot(0)$, where $|E_j^\perp|_g = 1$ and $E_j^\perp \perp \gammadot(0)$. Let $E_j(t)$ be a $\nabla$-parallel transport of $E_j$ along $\gamma(t)$ for every $1 \le j \le k$. Define
	$$W_j(t) = \alpha_j(t)E_j^\perp(t) + \beta_j(t)\gammadot(t),$$
	where $\alpha_j(t)$ and $\beta_j(t)$ are differentiable and $\alpha_j^2(t) + \beta_j^2(t) = 1$ for all $0 \le t \le b$. Also, suppose $\begin{cases}
		\beta_j(0) = \beta_j\\
		\beta_j(b) = 0
	\end{cases}$ and $\begin{cases}
	\alpha_j(0) = \alpha_j\\
	\alpha_j(b) = 1.
	\end{cases}$ This means $|W_j(t)| = 1$ for all $t$. Define variations
	$$\theta_j(t,s) = \wexp(se^\varphi W_j(t))$$
	Define a function $f$ locally at $x$ by
	$$\left(f \circ \wexp_{\gamma(t)}\right)(E_j) = \int_0^b \left|\dfrac{\partial \theta_j}{\partial t}(t,s)\right| \, dt.$$
	Let $\alpha(t),\beta(t) > 0$ satisfy $\alpha^2(t) + \beta^2(t) = 1$ and
	$$\beta^2 = \sum_{j = 1}^k g\left(E_j,\gammadot(t)\right)^2 = \sum_{i = 1}^k \beta_j^2(t).$$
	Define $\chi$ as in (\ref{Chi_in_WuThm1}) and set $h = \chi \circ (-f)$. At $t = 0$, $W_j(0) = E_j$, so it suffices to show
	$$\sum_{j = 1}^k \Hess_{\gtilde} h(e^\varphi W_j, e^\varphi W_j) \ge \eta$$
	for some $\eta > 0$.\\
	By the first variation formula,
	\begin{equation}
		Z(-b)\sum_{j = 1}^kdf(e^\varphi W_j)^2 = Z(-b)ke^{2\varphi}\beta^2. \label{Sum_of_Z(b)df^2}
	\end{equation}
	By the second variation formula from Proposition \ref{Second_Variation_Formula},
	\begin{align}
		\notag \sum_{j=1}^k -\Hess_{\gtilde} f(e^\varphi W_j, e^\varphi W_j) &= \sum_{j=1}^k \int_0^b - \big|D_t e^\varphi W_j - d\varphi(\dot\gamma) e^\varphi W_j\big|^2 \; dt \Big|_{s=0}\\
		\notag &+ \sum_{j=1}^k \int_0^b g\big(R^\varphi(e^\varphi W_j, \dot\gamma)\dot\gamma, e^\varphi W_j\big) + g(D_t e^\varphi W_j, \dot\gamma)^2 \; dt \Big|_{s=0}\\
		&+ \sum_{j=1}^k \Big(d\varphi(\dot\gamma)|e^\varphi W_j|^2 + g(D_s e^\varphi W_j, \dot\gamma)\Big) \Big|_{t=b,\, s=0}^{t=0,\, s=0} \label{Sum_Hess_gtilde(e^varphi_W_j,e^varphi_W_j)_estimate}
	\end{align}
	Since $E_j(t)$ is a $\nabla$-parallel transport of $E_j$ along $\gamma(t)$, we have
	\begin{align}
		\notag &\sum_{j=1}^k \int_0^b - \big|D_t e^\varphi W_j - d\varphi(\gammadot)e^\varphi W_j \big|^2 \; dt \Big|_{s=0}\\
		\notag =& \sum_{j=1}^k \int_0^b - \big|d\varphi(\gammadot)e^\varphi W_j + e^\varphi \nabla_{\gammadot}W_j - d\varphi(\gammadot)e^\varphi W_j \big|^2 \; dt \Big|_{s=0}\\
		\notag =& \sum_{j=1}^k \int_0^b -e^{2\varphi} \big|\nabla_{\gammadot}W_j(t)\big|  \; dt \Big|_{s=0}\\
		\notag =& \sum_{j=1}^k \int_0^b -e^{2\varphi} \big|\nabla_{\gammadot}(\alpha_j(t)E_j^\perp(t) + \beta_j(t)\gammadot(t))\big| \; dt \Big|_{s=0}\\
		=& \sum_{j=1}^k \int_0^b -e^{2\varphi}(\alpha_j'(t)^2 + \beta_j'(t)^2)  \; dt. \label{Sum_int_0^b||^2dt}
	\end{align}
	By Lemma (\ref{Estimate_sum_wsec}), we have
	\begin{align}
		\notag \sum_{j=1}^k \int_0^b g\big(R^\varphi(e^\varphi W_j, \dot\gamma)\dot\gamma, e^\varphi W_j\big) \; dt \Big|_{s=0} =& \sum_{j=1}^k \int_0^b e^{2\varphi}g\big(R^\varphi(W_j, \dot\gamma)\dot\gamma, W_j\big) \; dt \Big|_{s=0}\\
		\notag \ge& \int_0^b - e^{2\varphi}k\Kbar(\beta^2(t) + 4\alpha(t)\beta(t)) \; dt\\
		\notag &+ \int_0^b e^{2\varphi}\min\{\wRic_{k,\varphi}(\gamma(t))\}\alpha^2(t) \; dt\\
		\notag \ge& -\umax^2k\Kbar\max\{\beta^2(t)+4\alpha(t)\beta(t)\}\rho_0\\
		&+ \int_0^b e^{2\varphi}\min\{\wRic_{k,\varphi}(\gamma(t))\}\alpha^2(t) \; dt, \label{Sum_of_curvatures}
	\end{align}
	where $\Kbar$ is the maximum of $|\wsec_\varphi(M)|$, which can be attained because $M$ is compact. $\umax^2$ is the maximum value of $e^{2\varphi}$ on $M$, and $\rho_0$ is the maximal distance from $N$.\\
	Also, 
	\begin{align}
		\notag& \sum_{j=1}^k \int_0^b g(D_t e^\varphi W_j, \dot\gamma)^2 \; dt \Big|_{s=0}\\
		\notag =& \sum_{j=1}^k \int_0^b g( d\varphi(\gammadot)e^\varphi W_j + e^\varphi \nabla_{\gammadot} W_j, \dot\gamma)^2 \; dt \Big|_{s=0}\\
		\notag =& \sum_{j=1}^k \int_0^b g( d\varphi(\gammadot)e^\varphi (\alpha_j(t)E_j^\perp(t) + \beta_j(t)\gammadot(t)) + e^\varphi \nabla_{\gammadot}(\alpha_j(t)E_j^\perp(t) + \beta_j(t)\gammadot(t)), \dot\gamma)^2 \; dt\\
		\notag =& \sum_{j=1}^k \int_0^b g( d\varphi(\gammadot)e^\varphi\beta_j(t)\gammadot(t) + e^\varphi (\alpha_j'(t)E_j^\perp(t) + \beta_j'(t)\gammadot(t)), \dot\gamma)^2 \; dt\\
		\notag =& \sum_{j=1}^k \int_0^b g( d\varphi(\gammadot)e^\varphi\beta_j(t)\gammadot(t) + e^\varphi\beta_j'(t)\gammadot(t), \dot\gamma)^2 \; dt\\
		=& \sum_{j=1}^k \int_0^b d\varphi(\gammadot)^2e^{2\varphi}\beta_j^2(t) + e^{2\varphi}\beta_j'(t)^2 \; dt\\
		\ge& \sum_{j=1}^k \int_0^b e^{2\varphi}\beta_j'(t)^2 \; dt. \label{Sum_of_extra_term}
	\end{align}
	At $t=0, s=0$, by the relationship between $\nablatilde$ and $\nabla$, we have
	\begin{align}
		\notag \sum_{j=1}^k \Big(d\varphi(\dot\gamma)|e^\varphi W_j|^2 + g(D_s e^\varphi W_j, \dot\gamma)\Big) \Big|_{t=0,\, s=0} =& \sum_{j=1}^k \Big(d\varphi(\dot\gamma)e^{2\varphi(x)} + g(\nabla_{e^{\varphi(x)} W_j}e^{\varphi(x)} W_j, \dot\gamma)\Big)\\
		\notag =& \sum_{j=1}^k \left(d\varphi(\dot\gamma)e^{2\varphi(x)} + g\left(\nablatilde_{e^{\varphi(x)} W_j} e^{\varphi(x)} W_j, \gammadot\right)\right)\\
		\notag&+ \sum_{j=1}^k 2e^{2\varphi(x)}d\varphi(W_j)g\left(W_j, \gammadot\right)\\
		\notag&- \sum_{j=1}^k d\varphi(\gammadot)\left|e^{\varphi(x)} W_j\right|^2\\
		\notag =& \sum_{j=1}^k \left(d\varphi(\dot\gamma)e^{2\varphi(x)} + 2e^{2\varphi(x)}d\varphi(W_j)g\left(W_j, \gammadot\right)\right)\\
		\notag &- \sum_{j=1}^k d\varphi(\gammadot)\left|e^{\varphi(x)} W_j\right|^2\\
		\notag =& \sum_{j=1}^k \Big(d\varphi(\dot\gamma)e^{2\varphi(x)} + 2e^{2\varphi(x)}\beta_j d\varphi(W_j)\Big)\\
		\notag &- \sum_{j=1}^k d\varphi(\gammadot)e^{2\varphi(x)}\\
		=& \sum_{j=1}^k  2e^{2\varphi(x)}\beta_j d\varphi(W_j). \label{Sum_of_at_0}
	\end{align}
	At $t=b, s=0$, we have
	\begin{align}
		\notag \sum_{j=1}^k \Big(d\varphi(\dot\gamma)|e^\varphi W_j|^2 + g(D_s e^\varphi W_j, \dot\gamma)\Big) \Big|_{t=b,\, s=0} =& \sum_{j=1}^k e^{2\varphi(y)} d\varphi(\dot\gamma)|\alpha_j(b)E_j^\perp(b)|^2\\
		\notag &+ \sum_{j=1}^k g(\nabla_{e^{\varphi(y)}\alpha_j(b)E_j^\perp(b)}e^{\varphi(y)}\alpha_j(b)E_j^\perp(b), \dot\gamma)\\
		=& -\sum_{j=1}^k e^{2\varphi(y)}\alpha_j^2(b)\IItilde(E_j^\perp, E_j^\perp). \label{Sum_of_at_b=sum_of_IItilde}
	\end{align}
	Therefore, combining (\ref{Sum_Hess_gtilde(e^varphi_W_j,e^varphi_W_j)_estimate}), (\ref{Sum_int_0^b||^2dt}), (\ref{Sum_of_curvatures}), (\ref{Sum_of_extra_term}), (\ref{Sum_of_at_0}), and (\ref{Sum_of_at_b=sum_of_IItilde}), we have
	\begin{align}
		\notag \sum_{j=1}^k -\Hess_{\gtilde} f(e^\varphi W_j, e^\varphi W_j) \ge& \sum_{j=1}^k \int_0^b -e^{2\varphi}(\alpha_j'(t)^2 + \beta_j'(t)^2) + e^{2\varphi}\beta_j'(t)^2 \; dt\\
		\notag &-\umax^2k\Kbar\max\{\beta^2(t)+4\alpha(t)\beta(t)\}\rho_0\\
		\notag &+ \int_0^b e^{2\varphi}\min\{\wRic_{k,\varphi}(\gamma(t))\}\alpha^2(t) \; dt\\
		&+ \sum_{j=1}^k  2e^{2\varphi(x)}\beta_j d\varphi(W_j) + \sum_{j=1}^k e^{2\varphi(y)}\alpha_j^2(b)\IItilde(E_j^\perp, E_j^\perp) \label{Sum_of_negative_Hess_gtilde}
	\end{align}
	Combining (\ref{Sum_of_negative_Hess_gtilde}) and (\ref{Sum_of_Z(b)df^2}), we have
	\begin{align}
		\notag \sum_{j=1}^k \Hess_{\gtilde} h(e^\varphi E_j, e^\varphi E_j) \ge& Z(-b)ke^{2\varphi}\beta^2 + \sum_{j=1}^k \int_0^b -e^{2\varphi}(\alpha_j'(t)^2 + \beta_j'(t)^2) + e^{2\varphi}\beta_j'(t)^2 \; dt\\
		\notag &-\umax^2k\Kbar\max\{\beta^2(t)+4\alpha(t)\beta(t)\}\rho_0 + \sum_{j=1}^k  2e^{2\varphi(x)}\beta_j d\varphi(W_j) \\
		\notag &+ \int_0^b e^{2\varphi}\min\{\wRic_{k,\varphi}(\gamma(t))\}\alpha^2(t) \; dt + \sum_{j=1}^k e^{2\varphi(y)}\alpha_j^2(b)\IItilde(E_j^\perp, E_j^\perp)\\
		\notag \ge& Z(-b)k\umin^2\beta^2 - k\umax^2\max_{1 \le j \le k}\{\alpha_j'(t)^2\} \rho_0\\
		\notag &-\umax^2k\Kbar\max\{\beta^2(t)+4\alpha(t)\beta(t)\}\rho_0 + \sum_{j=1}^k  2e^{2\varphi(x)}\beta_j d\varphi(W_j)\\
		&+ \int_0^b e^{2\varphi}\min\{\wRic_{k,\varphi}(\gamma(t))\}\alpha^2(t) \; dt + \sum_{j=1}^k e^{2\varphi(y)}\alpha_j^2(b)\IItilde(E_j^\perp, E_j^\perp),
	\end{align}
	where $\umin^2$ is the minimum value of $e^{2\varphi}$ on $M$.\\
	Choosing $Z(-b))$ sufficiently enough, we obtain
	\begin{align*}
		\eta \le& Z(-b)k\umin^2\beta^2 - k\umax^2\max_{1 \le j \le k}\{\alpha_j'(t)^2\} \rho_0-\umax^2k\Kbar\max\{\beta^2(t) +4\alpha(t)\beta(t)\}\rho_0\\
		&+ \sum_{j=1}^k  2e^{2\varphi(x)}\beta_j d\varphi(W_j).
	\end{align*}
	Together with the assumption that $\wRic_{k,\varphi}(M) \ge 0$ and $\Lambdatilde_k \ge 0$, we know
	\begin{align}
		\notag \sum_{j=1}^k \Hess_{\gtilde} h(e^\varphi E_j, e^\varphi E_j) \ge& \int_0^b e^{2\varphi}\min\{\wRic_{k,\varphi}(\gamma(t))\}\alpha^2(t) \; dt + \sum_{j=1}^k e^{2\varphi(y)}\alpha_j^2(b)\IItilde(E_j^\perp, E_j^\perp)\\
		\notag &+ \eta\\
		\notag \ge& \umin^2 \alpha^2(0) \rho_0 \wRic_{k,\varphi}(M) + \umin^2\Lambdatilde_k + \eta\\
		\ge& \eta.
	\end{align}
	Note that $h \in C^2(M)$. By restricting to a smaller compact neighborhood $B' \subset B$, the eigenvalues of $\Hess_{\gtilde} h$ are uniformly bounded, giving
	$$-A_1 \le \Hess_{\gtilde} h(e^\varphi X, e^\varphi X) \le A_2.$$
	for some positive $A_1, A_2$ and for all orthonormal $e^\varphi X \in T_xM$. Hence $h \in C^2(M)$ satisfies the required conditions.\\
	Moreover, $\chi(-f)$ supports $\chi(-\rho)$ at $x$ since $-f$ supports $-\rho$ and $\chi' > 1$. By Lemma \ref{Ctidle(k)Lemma}, this implies $\chi(-\rho) \in \Ctilde(k)$.\\
	By Lemma \ref{Ctidle(k)Lemma}, this shows $\chi(-\rho) \in \Ctilde(k)$.\\
	The rest of the proof follows from Lemma \ref{Smoothing_with_proper_Morse_function} and Morse Theory. Let $c > 0$ be small enough such that $T(c) = \{z \in M \; | \; (\chi \circ -\rho)(z) \le c\}$ has no critical points for $\chi \circ -\rho$. Then $T(c)$ deformation retracts onto $\partial M$, so $M' = M \setminus T(c)$ has the same homotopy type as $M$ by Theorem 3.1 in \cite{Milnor2}. By Lemma \ref{Smoothing_with_proper_Morse_function}, there exists a proper Morse function $F \in \Ctilde(k)$ with $|F - \chi(-\rho)| < \epsilon$, and hence $\mathrm{index}(F) \le k-1$. \cite[Theorem 3.5.]{Milnor2} then gives (a) in Theorem \ref{WuThm2}.\\
\end{proof}
With slight modification, the same argument also yields the following corollaries.
\begin{coro}
	Let $(M,g,e^{-\varphi})$ be an $n$-dimensional weighted Riemannian manifold. Let $1 \le k \le n$ and suppose $\wRic_{k,\varphi}(M) \ge 0$. Let $N \subseteq M$ be a $\gtilde$-totally geodesic hypersurface. Then $M$ has the same homotopy type of a CW complex of obtained from $N$ by attaching a finite numbers of cells of dimension $\ge n-k+1$.
\end{coro}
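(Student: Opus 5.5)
The plan is to mirror the proof of Theorem \ref{WuThm2}, with the boundary $\partial M$ replaced by the $\gtilde$-totally geodesic hypersurface $N$. The natural exhaustion function is the distance $\rho$ to $N$. We will show that $\chi(-\rho)$, with $\chi$ as in (\ref{Chi_in_WuThm1}), lies in $\Ctilde(k)$ on $M \setminus N$, at least on compact regions where it is needed. Morse theory then builds $M$ from a tubular neighborhood of $N$.

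\textbf{Step 1: the convexity estimate.} Fix $x$ off $N$ and a minimizing unit-speed $\nabla$-geodesic $\gamma$ from $x$ to its foot point $y\in N$. First variation makes $\gammadot(b)$ normal to $N$. Given a $g$-orthonormal set $\{E_1,\dots,E_k\}\subseteq T_xM$, we build the same fields $W_j(t)=\alpha_j(t)E_j^\perp(t)+\beta_j(t)\gammadot(t)$, with $\beta_j(b)=0$ and $\alpha_j(b)=1$, so the endpoints stay on $N$. We then form the variations $\theta_j(s,t)=\wexp_{\gamma(t)}(se^\varphi W_j(t))$. Applying Proposition \ref{Second_Variation_Formula} exactly as in (\ref{Sum_Hess_gtilde(e^varphi_W_j,e^varphi_W_j)_estimate})--(\ref{Sum_of_at_b=sum_of_IItilde}) produces the same four groups of terms:
\begin{enumerate}
\item the kinetic terms, controlled by $\max|\alpha_j'|$;
\item the curvature integral, bounded below via Lemma \ref{Estimate_sum_wsec} by $\int_0^b e^{2\varphi}\wRic_{k,\varphi}\alpha^2\,dt \ge 0$ minus a $\Kbar$-term carrying a factor $\beta$;
\item the $t=0$ boundary term $2e^{2\varphi(x)}\sum_j\beta_j d\varphi(W_j)$;
\item the $t=b$ boundary term $-\sum_j e^{2\varphi(y)}\alpha_j^2(b)\IItilde(E_j^\perp,E_j^\perp)$.
\end{enumerate}
Since $N$ is $\gtilde$-totally geodesic, $\IItilde\equiv0$. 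The term in item 4 therefore vanishes, which replaces the hypothesis $\Lambdatilde_k\ge 0$ used before. The $\beta$-dependent terms are absorbed by the $Z(-b)k\umin^2\beta^2$ contribution from $\chi''$ after completing the square, as in (\ref{Complete_the_Square_and_Delete_Positive_Terms}). Compactness of the region under consideration bounds $\Kbar$, $d\varphi$ and $e^{\pm\varphi}$.

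\textbf{Step 2: positivity and the main obstacle.} The surviving lower bound is $\int_0^b e^{2\varphi}\wRic_{k,\varphi}\alpha^2\,dt + \eta$. Here $\eta$ comes from choosing $Z$ large, with the $\alpha_j'$ terms made small by choosing the $\alpha_j(t)$ to vary slowly. This is the step I expect to be the real obstacle. With only $\wRic_{k,\varphi}\ge 0$, the strict positivity has to come entirely from the $Z\beta^2$ term. When $\beta=0$ that term is zero, and nothing is left to give strict positivity. To handle this, I would first try to use the same device as the paper's proof of Theorem \ref{WuThm2}: take $\eta$ from the $Z$-absorption and accept the resulting nonstrict-to-strict passage. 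Failing that, I would interpret the hypothesis as strict positivity of $\wRic_{k,\varphi}$ near $N$, as in condition 2 of Theorem \ref{WuThm1}. Either way, support arguments then transfer the bound to $\chi(-\rho)$, since $-f$ supports $-\rho$ and $\chi'\ge1$. Lemma \ref{Ctidle(k)Lemma} then gives $\chi(-\rho)\in\Ctilde(k)$ on compact subsets of $M\setminus N$.

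\textbf{Step 3: topology.} By Lemma \ref{Smoothing_with_proper_Morse_function} (or Lemma \ref{Smoothing_Thm_for_Ctilde(k)} plus a generic perturbation), approximate $-\chi(-\rho)$, equivalently a decreasing reparametrization of $\rho$, by a Morse function $F$ whose negative lies in $\Ctilde(k)$. Every critical point of $-F$ then has index at most $k-1$, so as a function increasing away from $N$, $F$ has critical points of index at least $n-k+1$. For small $c$, the set $\{\rho\le c\}$ has no critical points and deformation retracts onto $N$; this uses that $N$ is a hypersurface with a normal tubular neighborhood. Applying \cite[Theorem 3.1, 3.5]{Milnor2} to $F$ starting from this neighborhood attaches one cell of dimension $\ge n-k+1$ per critical point, which gives the stated CW structure relative to $N$. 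Finiteness of the number of cells uses compactness, as in Theorem \ref{WuThm2}.
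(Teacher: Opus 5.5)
Your route is the paper's own. The paper proves this corollary by rerunning the proof of Theorem \ref{WuThm2} with $N$ in place of $\partial M$, $\rho$ the distance to $N$, and $\Lambdatilde_k=0$. But the obstacle you flag in Step 2 is a genuine gap, and your first fallback does not close it. In the proof of Theorem \ref{WuThm2}, the constant $\eta$ comes from choosing $Z(-b)$ so large that $\eta\le Z(-b)k\umin^2\beta^2-(\text{nonnegative terms})+2e^{2\varphi(x)}\sum_j\beta_j\,d\varphi(W_j)$. At $\beta=0$ the right-hand side is $\le 0$ for every $Z$, so that step is simply invalid. Concretely, if every $E_j\perp\gammadot(0)$, you can take $W_j(t)=E_j(t)$ parallel. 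Then the first variations vanish, the kinetic terms vanish, and both boundary terms vanish (the one at $t=b$ because $\IItilde\equiv0$). The second derivatives of your support functions then sum to exactly $\int_0^b e^{2\varphi}\sum_{j}\wsec_\varphi(E_j(t),\gammadot)\,dt$, which $\wRic_{k,\varphi}\ge 0$ only makes nonnegative. Nothing produces a positive $\eta$.

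In fact no argument can produce one, because the statement as written is false. Take $\varphi\equiv0$ (so $\gtilde=g$ and $\wRic_{k,\varphi}=\Ric_k$), $M=T^n$ flat, and $N=T^{n-1}\times\{\mathrm{pt}\}$, a closed totally geodesic hypersurface. Then $\wRic_{k,\varphi}\equiv0$ for every $1\le k\le n-1$. But any complex obtained from $N$ by attaching cells of dimension $\ge n-k+1\ge2$ has $H_1$ a quotient of $H_1(N)\cong\Z^{n-1}$, so it is not homotopy equivalent to $T^n$. For $n=2$, $k=1$ this is just the flat torus and a closed geodesic. The flat $T^{n-1}\times[0,1]$ shows in the same way that Theorem \ref{WuThm2} itself needs a strict inequality, so borrowing its $\eta$ is not an option.

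Your second fallback, $\wRic_{k,\varphi}>0$ on a collar of $N$, is the right kind of repair. The $\alpha^2$-part of the curvature integral then gains a definite positive amount on the last stretch of $\gamma$, as in condition 2 of Theorem \ref{WuThm1}. But it proves a different statement. It also requires $M$ and $N$ compact, which your uniform bounds in Step 1 and the finiteness claim in Step 3 silently use but the corollary does not assume.

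Separately, Step 1 does not transfer verbatim. Lemma \ref{Estimate_sum_wsec} needs an orthonormal set, while the $W_j(t)$ are not mutually orthogonal once two of the $E_j$ have $\gammadot(0)$-components. The standard fix is a single smooth support function at $x$ whose variation field is linear in the initial vector. You then compute its Hessian trace on $\Span\{E_j\}$ in an orthonormal basis with only one vector not orthogonal to $\gammadot(0)$.
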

The proof is identical to that of Theorem \ref{WuThm2}, with $N$ replacing $\partial M$ and $\rho$ taken as the distance to $N$. Since $N$ is $\gtilde$-totally geodesic, we have $\Lambdatilde_k = 0$.
\begin{coro}
	Let $(M,g,e^{-\varphi})$ be an $n$-dimensional simply connected compact weighted Riemannian manifold, and suppose $\wsec_\varphi(M) > 0$. Let $N \subseteq M$ be an orientable $\gtilde$-totally geodesic compact hypersurface such that the $\wsec_\varphi(M) > 0$ in a neighborhood of $N$. Then the pair $(M,N)$ is homeomorphic to $(S^n, S^{n-1})$, where $S^{n-1}$ is the equatorial hypersphere.
\end{coro}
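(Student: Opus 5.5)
The plan is to cut $M$ open along $N$ and apply Theorem \ref{WuThm1} (condition 2) to each piece. Since $M$ is simply connected and compact, and $N$ is a compact orientable embedded hypersurface, $N$ separates $M$. Indeed, $N$ orientable in the orientable (simply connected) manifold $M$ means its normal bundle is trivial. Hence $N$ is two-sided, and a non-separating two-sided hypersurface would give a closed loop meeting $N$ transversally once. That loop would represent a nontrivial class in $H_1(M;\Z_2)$, which vanishes since $\pi_1(M)=0$. So $M \setminus N$ has exactly two components, and their closures $M_1, M_2$ are compact manifolds with $\partial M_1 = \partial M_2 = N$ (assuming $N$ connected; if not, the same argument applies to a component).

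Next I verify the hypotheses of Theorem \ref{WuThm1}, condition 2, for each $M_i$. The boundary $\partial M_i = N$ is $\gtilde$-totally geodesic. Its weighted second fundamental form is the second fundamental form of $\gtilde$, so $\IItilde \equiv 0$ and $\Lambdatilde = 0 \ge 0$. We have $\wsec_\varphi \ge 0$ (indeed $>0$) on $M_i$, and $\wsec_\varphi > 0$ in a neighborhood of $\partial M_i$ by hypothesis. Theorem \ref{WuThm1} then gives diffeomorphisms $M_i \cong D^n$ with $\partial M_i = N \cong S^{n-1}$.

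I would double-check that the proof of Theorem \ref{WuThm1} only uses the metric near the boundary and the minimizing $\nabla$-geodesics inside $M_i$. This matters because a minimizing geodesic in $M_i$ to $\partial M_i$ stays in $M_i$ and meets $N$ orthogonally. The boundary term is then $-e^{2\varphi}\alpha^2\IItilde(X^\perp,X^\perp)=0$, and positivity comes from the curvature integral near $N$, exactly as in the $\psi = c\sigma$ case of that proof.

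Finally, $M = M_1 \cup_N M_2$ is a union of two closed balls glued along their common boundary sphere. I would construct a homeomorphism to $(S^n, S^{n-1})$ as follows. Fix a diffeomorphism $h_1: (M_1,N) \to (D^n_+, S^{n-1})$ onto the upper hemisphere. Restrict it to $N$ to get a homeomorphism $N \to S^{n-1}$. Compose with the inverse of the diffeomorphism $M_2 \cong D^n$ on the boundary to get a homeomorphism of $S^{n-1}$. Extend this to the lower hemisphere by the Alexander trick (coning). The two maps agree on $N$, so they glue to a homeomorphism $(M,N) \to (S^n,S^{n-1})$. This is why the statement asserts only homeomorphism: exotic gluing maps prevent a diffeomorphism conclusion in general.

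The main obstacle is the separation step together with ensuring Theorem \ref{WuThm1} applies to each half. In particular, $\rho$ must be the distance to $N$ within $M_i$, and the strict convexity argument needs $\wsec_\varphi>0$ only near $N$. I expect this bookkeeping, rather than any new curvature computation, to be the main work.
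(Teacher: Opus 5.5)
Your argument follows the route the paper intends. The paper only defers to Wu's Theorem 4, whose proof is this cut-and-glue: separate $M$ along $N$ using simple connectivity, apply Theorem \ref{WuThm1}, condition 2, to each closed half (here $\IItilde\equiv 0$, so $\Lambdatilde=0$), and recognize $M$ as a twisted sphere via the Alexander trick. Your separation argument through $H_1(M;\Z_2)=0$ and your coning construction are fine. The bookkeeping you flag as the main obstacle is not actually needed: each $(M_i, g|_{M_i}, e^{-\varphi})$ is itself a compact weighted manifold with boundary satisfying the hypotheses, so Theorem \ref{WuThm1} applies to it as a black box.

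The step that is genuinely missing is connectedness of $N$. The conclusion $(M,N)\cong(S^n,S^{n-1})$ requires $N$ to be connected. Running the argument on one component $N_0$ only gives $(M,N_0)\cong(S^n,S^{n-1})$, and it does not exclude further components of $N$ lying in the interiors of the two balls. You can close this in either of two ways.

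\begin{enumerate}
\item Use Theorem \ref{Frankel's_Theorem} with $k=1$ and $r=s=n-1$. Then $r+s=2n-2\ge n$, and $\wsec_\varphi>0$ is exactly $\wRic_{1,\varphi}>0$, so two distinct components of $N$ would have to intersect.
\item Cut along all of $N$ at once. Each component of $M\setminus N$ has compact closure whose boundary is $\gtilde$-totally geodesic. By Theorem \ref{WuThm1} each such closure is a ball, so its boundary is connected. The dual graph (pieces as vertices, components of $N$ as edges) is a tree: a cycle would give a loop meeting some component of $N$ transversally exactly once, contradicting $H_1(M;\Z_2)=0$. A tree in which every vertex has degree one is a single edge, so $N$ has exactly one component.
\end{enumerate}

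The second route has the advantage of needing only $\wsec_\varphi\ge 0$ away from $N$.
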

The proof of the unweighted version of this Corollary is demonstrated in \cite[Theorem 4.]{Wu}, and the weighted version has a similar proof.

\section{Frankel's Theorem with $\wRic_{k,\varphi}(M) > 0$}\label{FrankelsTheorem}
The proof of the weighted version of Frankel’s Theorem with $\wRic_{k,\varphi}(M) > 0$ is also a direct application of the second variation formula for arc length stated in Proposition \ref{Second_Variation_Formula}.
\begin{proof}[Proof of \ref{Frankel's_Theorem}:]
	Let $p_1 \in N_1, p_2 \in N_2$ be points realizing the minimal distance between $N_1$ and $N_2$.\\
	If $p_1 \not= p_2$, let $\gamma : [0, l] \longrightarrow M$ be a unit speed geodesic from $p_1$ to $p_2$. Parallel transport $T_{p_1}N_1$ along $\gamma$ to $p_2$, and denote the resulting subspace by $T'_{p_1}N_1$. Since $T_{p_1}N_1 \perp \gammadot$ at $p_1$, it follows that $T'_{p_1}N_1 \perp \gammadot$ at $p_2$. Hence,
	$$\dim(T'_{p_1}N_1 \cap T_{p_2}N_2) \ge r + s - (n-1) \ge k.$$
	Choose $k$ orthonormal vectors $E_1',...,E_k' \in T'_{p_1}N_1 \cap T_{p_2}N_2$, and parallel transport them back to $p_1$ via $\gamma$, denoting the resulting vectors by $E_1, ..., E_k$. Now, consider variations $$\theta_j(t,s) = \widetilde{\exp}_{\gamma(t)}(se^\varphi E_j(t)),$$
	for $1 \le j \le k$.\\
	At $p_1$, i.e. $t = 0$, we have
	$$d\varphi(\gammadot(0))|e^{\varphi(p_1)}E_j(0)|^2 = e^{2\varphi(p_1)}d\varphi(\gammadot(0)),$$
	and
	\begin{align*}
		g(D_s(e^{\varphi} E_j(0)), \gammadot(0)) &= g((D_se^\varphi)E_j(0), \gammadot(0)) + g(e^\varphi \nabla_{e^\varphi E_j(0)}E_k(0), \gammadot(0))\\
		&= e^{2\varphi(p_1)} g(\nabla_{E_j(0)}E_j(0), \gammadot(0)).
	\end{align*}
	At $p_2$, i.e. $t = 1$, similarly,
	$$d\varphi(\gammadot(1))|e^{\varphi(p_2)}E_j(1)|^2 = e^{2\varphi(p_2)}d\varphi(\gammadot(1)),$$
	and
	$$g(D_s(e^{\varphi} E_j(1)), \gammadot(1)) = g((D_se^\varphi)E_j(1), \gammadot(1)) + g(e^\varphi \nabla_{e^\varphi E_j(1)}E_k(1), \gammadot(1)) = e^{2\varphi(p_2)} g(\nabla_{E_j(1)}E_j(1), \gammadot(1)).$$
	Applying the second variation formula for arc length from Proposition \ref{Second_Variation_Formula}, we obtain
	\begin{align*}
		\sum_{j = 1}^{k} \dfrac{d^2}{ds^2}\Bigg|_{s = 0} L_g(\theta_j(t,s)) =& \sum_{j = 1}^k \int_0^1 \big|D_t (e^\varphi E_j(t)) - d\varphi(\dot\gamma)(e^\varphi E_j(t))\big|^2 \, dt \Big|_{s=0}\\
		&- \sum_{j = 1}^k \int_0^1 g\big(R^\varphi(e^\varphi E_j(t), \dot\gamma)\dot\gamma, e^\varphi E_j(t)\big) \, dt \Big|_{s=0}\\
		&+ \sum_{j = 1}^{k} \Big(d\varphi(\dot\gamma)|e^\varphi E_j(t)|^2 + g(D_s (e^\varphi E_j(t)), \dot\gamma)\Big) \Big|_{t=0,\, s=0}^{t=1,\, s=0}\\
		=& -\int_0^1 e^{2\varphi} \sum_{j = 1}^{k} \wsec(E_j(t),\gammadot) \, dt\\
		&+ \sum_{j=1}^{k} e^{2\varphi(p_2)}\IItilde(E_j(1), E_j(1)) - \sum_{j=1}^{k} e^{2\varphi(p_1)}\IItilde(E_j(0), E_j(0)).\\
		=& -\int_0^1 e^{2\varphi} \sum_{j = 1}^{k} \wsec(E_j(t),\gammadot) \, dt\\
		< & 0.
	\end{align*}
	This contradicts the fact that $\gamma$ is length minimizing. Therefore
	$$N_1 \cap N_2 \not= \varnothing.$$
\end{proof}
This immediately yields the following corollary, corresponding to the case $k = n-1$.

\begin{coro}
	In a complete connected weighted Riemannian manifold with $\Ric_{(n-1)\varphi}^{1-n} > 0$, any two minimal hypersurfaces with respect to $\nablatilde$ must intersect.
\end{coro}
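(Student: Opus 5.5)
The plan is to specialize the proof of Theorem \ref{Frankel's_Theorem} to $k=n-1$ and $r=s=n-1$. The point is that minimality, rather than total geodesicity, is exactly what that argument needs in this case, because the boundary terms will appear only through a full trace. As in the proof of Theorem \ref{Frankel's_Theorem}, I implicitly assume that the distance between the two hypersurfaces $N_1,N_2$ is realized, for instance because they are compact (closed). I then argue by contradiction: suppose $N_1\cap N_2=\varnothing$, pick $p_1\in N_1$ and $p_2\in N_2$ realizing the distance, and let $\gamma:[0,l]\to M$ be a unit-speed minimizing $\nabla$-geodesic joining them. By the first variation formula, $\gammadot$ is $g$-normal to $N_1$ at $p_1$ and to $N_2$ at $p_2$.

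\textbf{Step 1 (linear algebra).} Both $T'_{p_1}N_1$ (the parallel transport of $T_{p_1}N_1$ along $\gamma$) and $T_{p_2}N_2$ are $(n-1)$-dimensional subspaces of $\gammadot(l)^\perp$, so they coincide with $\gammadot(l)^\perp$. Choose a $g$-orthonormal basis $E_1,\dots,E_{n-1}$ of $\gammadot^\perp$ at $p_2$ and transport it parallelly along $\gamma$. Each $E_j(t)$ then stays orthonormal and perpendicular to $\gammadot$. At $t=0$ the $E_j$ form a basis of $T_{p_1}N_1$, and at $t=l$ they form a basis of $T_{p_2}N_2$.

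\textbf{Step 2 (second variation).} Use the variations $\theta_j(t,s)=\wexp_{\gamma(t)}(se^\varphi E_j(t))$. Since each $\theta_j(\cdot,s)$ is a $\nablatilde$-geodesic, the endpoint curves remain in $N_i$ to first order, as in the earlier proof. Apply Proposition \ref{Second_Variation_Formula} and sum over $j$, exactly as in the proof of Theorem \ref{Frankel's_Theorem}.
\begin{itemize}
\item The kinetic term vanishes, because $D_t(e^\varphi E_j)=d\varphi(\gammadot)e^\varphi E_j$ by parallelism.
\item The curvature term becomes $-\int_0^l e^{2\varphi}\sum_{j=1}^{n-1}\wsec_\varphi(E_j,\gammadot)\,dt$.
\item The endpoint terms become $e^{2\varphi(p_2)}\sum_j\IItilde_{N_2}(E_j,E_j)-e^{2\varphi(p_1)}\sum_j\IItilde_{N_1}(E_j,E_j)$, using the same $\nabla$/$\nablatilde$ conversion as in \eqref{Evaluate_at_t=b,s=0_to_Get_Weighted_Second_Fundamental_Form}.
\end{itemize}

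\textbf{Step 3 (key observation).} Each endpoint sum is $\tr_g\IItilde_{N_i}$, taken over a full $g$-orthonormal basis of $TN_i$. This equals $e^{-2\varphi}$ times the $\gtilde$-trace of the $\gtilde$-second fundamental form, i.e.\ a positive multiple of the $\gtilde$-mean curvature. It therefore vanishes because $N_i$ is minimal with respect to $\nablatilde$. This is why $k=n-1$ is essential: for $k<n-1$ only partial traces would appear, and minimality would not control them.

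\textbf{Step 4 (curvature identity).} By the computation in the introduction, $\sum_{j=1}^{n-1}\wsec_\varphi(E_j,\gammadot)=\Ric^{1-n}_{(n-1)\varphi}(\gammadot,\gammadot)>0$. Hence $\sum_j\frac{d^2}{ds^2}L_g(\theta_j)<0$, so some $\theta_j$ strictly decreases length to second order. This contradicts the minimality of $\gamma$ among curves from $N_1$ to $N_2$.

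\textbf{Main obstacle.} The delicate step is checking the endpoint terms. One must verify that the $d\varphi(\gammadot)|e^\varphi E_j|^2$ contributions combine with $g(\nabla_{e^\varphi E_j}e^\varphi E_j,\gammadot)$ to give exactly $\IItilde$ with the correct sign at each end. Since the variation curves are $\nablatilde$-geodesics and not curves inside $N_i$, one must also confirm that no first-order correction to the endpoint term survives. I would handle this by repeating the computation \eqref{Use_Nablatilde_and_Nabla_to_Simplify}, decomposing the acceleration of the endpoint curve into its parts tangent and normal to $N_i$; the tangential part pairs to zero against the normal $\gammadot$.
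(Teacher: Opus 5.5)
Your overall plan is right, and it is more careful than the paper, which only says the corollary follows from Theorem \ref{Frankel's_Theorem} with $k=n-1$, even though that theorem assumes $\gtilde$-totally geodesic submanifolds. Your trace observation in Step 3 is exactly the ingredient that is needed. The gap is in Step 2. You keep the paper's variation $\theta_j(t,s)=\wexp_{\gamma(t)}(se^{\varphi}E_j(t))$, whose endpoint curves $s\mapsto\theta_j(0,s)$ and $s\mapsto\theta_j(l,s)$ are $\nablatilde$-geodesics tangent to $N_i$. These curves lie in $N_i$ when $N_i$ is $\gtilde$-totally geodesic, which is why the choice works in Theorem \ref{Frankel's_Theorem}. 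For a merely minimal $N_i$, however, they leave $N_i$ at second order wherever $\IItilde(E_j,E_j)\neq 0$. Then the curves $\theta_j(\cdot,s)$ are not competitors joining $N_1$ to $N_2$, and a negative second derivative of their length contradicts nothing. Being tangent to $N_i$ to first order is not enough in a second-order argument.

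Moreover, for this variation the endpoint terms are not the $\IItilde$ sums you claim. Write $c(s)=\theta_j(0,s)$. Since $\nablatilde_{\dot c}\dot c=0$, the relation between $\nabla$ and $\nablatilde$ gives $\nabla_{\dot c}\dot c=2d\varphi(\dot c)\dot c-|\dot c|^2\nabla\varphi$. Hence $g(D_sV,\gammadot)=-e^{2\varphi}d\varphi(\gammadot)$, which exactly cancels $d\varphi(\gammadot)|V|^2$. So the endpoint terms vanish identically and minimality is never used. If that variation were admissible, the same computation with constant $\varphi$ would show that any two closed hypersurfaces in a manifold with $\Ric>0$ must meet. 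This fails for two distinct latitude spheres in the round $S^n$. Your proposed remedy, splitting the acceleration into parts tangent and normal to $N_i$, only applies to curves that lie in $N_i$, so it does not rescue this variation.

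The repair is straightforward.
\begin{enumerate}
\item Keep $V(t)=e^{\varphi(\gamma(t))}E_j(t)$, but use a variation whose endpoint curves lie in $N_1$ and $N_2$, with initial velocities $e^{\varphi(p_i)}E_j$. The interior terms of Proposition \ref{Second_Variation_Formula} depend only on $V$, so the kinetic term still vanishes and the curvature term is unchanged.
\item At $p_2$, the normal part of the endpoint acceleration is $e^{2\varphi}\II(E_j,E_j)$. For $X,Y$ tangent to $N_2$ we have $g(\nablatilde_XY,\mathbf n)=g(\nabla_XY,\mathbf n)+g(X,Y)d\varphi(\mathbf n)$. Hence the endpoint term equals $e^{2\varphi}$ times the $\gtilde$-second fundamental form of $N_2$ at $(E_j,E_j)$ paired with $\gammadot$, that is, $\pm e^{2\varphi}\IItilde(E_j,E_j)$. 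The same holds at $p_1$.
\item Summing over the full basis and using $\tr_{\gtilde}=e^{2\varphi}\tr_g$, the endpoint terms become the $\gtilde$-mean curvature vector paired with $\gammadot$. This vanishes by minimality.
\item The family now consists of genuine competitors, and the first variation vanishes. Therefore $\sum_j\frac{d^2}{ds^2}L_g=-\int_0^l e^{2\varphi}\Ric^{1-n}_{(n-1)\varphi}(\gammadot,\gammadot)\,dt<0$ yields the contradiction.
\end{enumerate}
With this change, together with the compactness assumption you already made, your argument is complete.
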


\begin{coro}[Wilking's Connectedness Lemma]
	Let $(M^n, g, e^{-\varphi})$ be a compact weighted Riemannian manifold with $\wRic_{k,\varphi}(M) \ge 0$ for some $1 \le k \le n-1$.  Suppose $N^{k-n} \subseteq M^n$ is a compacted, totally geodesic with respect to $\gtilde$ embedded submanifold of codimension $k$. Then the inclusion $N \longrightarrow M$ is $(n-2k+1)$-connected.
\end{coro}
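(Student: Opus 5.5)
We plan to follow Wilking's original argument, which relies on Morse theory for the energy functional on the path space $\Omega_N = \{c:[0,1]\to M \mid c(0),c(1)\in N\}$. As the paper notes, the statement has a typo: $N$ has dimension $n-k$, i.e.\ codimension $k$. The inclusion $N\to M$ is $(n-2k+1)$-connected exactly when the pair $(\Omega_N, N)$, with $N$ viewed as the constant paths, is $(n-2k+1)$-connected. Indeed, by the standard fibration argument, $\pi_i(M,N)\cong\pi_{i-1}(\Omega_N, N)$. So it suffices to show that every nonconstant $N$-to-$N$ geodesic $\gamma$, which is a critical point of the $g$-energy, has Morse index at least $n-2k+1$. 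We work with $\nabla$-geodesics and the $g$-length, matching Proposition \ref{Second_Variation_Formula}; critical points are geodesics meeting $N$ orthogonally at both ends.

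The index estimate copies the proof of Theorem \ref{Frankel's_Theorem}. Let $\gamma:[0,l]\to M$ be a unit-speed geodesic orthogonal to $N$ at $p_1=\gamma(0)$ and $p_2=\gamma(l)$. Let $P$ denote $\nabla$-parallel transport along $\gamma$. Consider $V=P(T_{p_1}N)\cap T_{p_2}N$ inside $\gammadot(l)^\perp$, which has dimension $n-1$. Then $\dim V\ge 2(n-k)-(n-1)=n-2k+1$.

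For each $E\in V$, take the parallel field $E(t)$ and the variation $\theta(s,t)=\wexp_{\gamma(t)}(se^{\varphi}E(t))$. As computed in the proof of Theorem \ref{Frankel's_Theorem}, the first integrand in Proposition \ref{Second_Variation_Formula} vanishes, because $D_t(e^\varphi E)=d\varphi(\gammadot)e^\varphi E$. The endpoint terms reduce to $\pm e^{2\varphi}\IItilde(E,E)$, which vanish since $N$ is $\gtilde$-totally geodesic. Hence the second variation equals $-\int_0^l e^{2\varphi}\wsec_\varphi(E(t),\gammadot)\,dt$. Because this expression is quadratic in $E$, it defines a quadratic form $Q$ on $V$.

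To get negativity from a \emph{Ricci} type bound, we argue as follows. Pick any $k$-dimensional subspace $U\subseteq V$ with a $g$-orthonormal basis $E_1,\dots,E_k$, which stay orthonormal and orthogonal to $\gammadot$ under transport. Then the trace of $Q$ on $U$ is $-\int_0^l e^{2\varphi}\sum_j\wsec_\varphi(E_j,\gammadot)\,dt\le 0$. With the strict hypothesis $\wRic_{k,\varphi}>0$, this trace is $<0$. A form on $V$ whose trace on every $k$-plane is negative has at least $\dim V-k+1$ negative eigenvalues. Combining with the $(\wexp)$-variations and $\theta$'s gives index $\ge n-2k+1-(k-1)$, which is too weak. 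So we instead use Wilking's refinement: restrict to variations along $V$ and apply the trace bound to the complementary part of the space of $\nabla$-parallel fields orthogonal to $\gammadot$. In Wilking's proof, the count $n-2k+1$ comes from $\dim(P(T_{p_1}N)\cap T_{p_2}N)-(k-1)$, with $N$ of dimension $n-k$, i.e.\ $\dim V\ge n-2k+1+(k-1)$ once one uses $\Ric_k$ correctly. Concretely, we intend to prove: if $Q$ on $V$ has trace $<0$ on every $k$-plane, then its index is at least $\dim V-k+1$. We will recheck the dimension count against Wilking's conventions, where codimension $k$ and the $\Ric_k$ index pairing yield exactly $n-2k+1$.

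The main obstacle is the nonnegative case $\wRic_{k,\varphi}\ge0$, where traces are only $\le0$ and a Frankel-type strictness fails. Our first attempt will follow Wilking's equality-case analysis. If the index were too small, there would be a $k$-plane $U\subseteq V$ on which $Q\ge0$ while its trace is $\le0$. Then the integrand $\sum_j\wsec_\varphi(E_j,\gammadot)$ vanishes identically, so the parallel fields $e^\varphi E_j$ are weighted Jacobi fields. From these we would build a nearby geodesic family, or a perturbation of the variation fields such as $E_j\mapsto (1+\epsilon\psi(t))E_j$, that produces a strictly negative direction. If that fails, we would fall back to assuming strict positivity, as in Theorem \ref{Frankel's_Theorem}. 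The remaining steps, namely the Morse theory on $\Omega_N$ and the relative homotopy conclusion, carry over verbatim from the unweighted case, since they do not depend on curvature.
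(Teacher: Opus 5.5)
Your outline matches what the paper has in mind. Its proof only defers to Wilking and to the proof of Theorem \ref{Frankel's_Theorem}: Morse theory for the energy on $\Omega_N$, the identification $\pi_i(M,N)\cong\pi_{i-1}(\Omega_N,N)$, and the Frankel-type second variation along $e^{\varphi}E$ with $E$ parallel in $V=P(T_{p_1}N)\cap T_{p_2}N$. The step that fails is the index count.

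The bound $\dim V\ge 2(n-k)-(n-1)=n-2k+1$ is pure linear algebra in $\dot\gamma(l)^\perp$. No curvature hypothesis can raise it to $n-k$, and Wilking's argument contains no such refinement: it assumes $\sec>0$, where the trace condition on $1$-planes already says $Q<0$ on all of $V$. A trace condition on $k$-planes alone cannot give more than $\dim V-k+1$ negative directions. For example, a form with $k-1$ small positive eigenvalues and all others very negative satisfies it. So your own computation was the honest one. Under $\wRic_{k,\varphi}>0$ the argument gives index $\ge n-3k+2$, hence $N\to M$ is $(n-3k+2)$-connected. For general codimension $d$ it gives $(n-2d-k+2)$, the count in the known intermediate-Ricci version of the lemma due to Guijarro--Wilhelm. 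This agrees with $n-2k+1$ only when $k=1$, i.e.\ $\wsec_\varphi>0$. The planned ``recheck against Wilking's conventions'' cannot close this gap of $k-1$.

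The nonnegative case is not something an equality-case analysis can rescue, because the statement is false under $\wRic_{k,\varphi}\ge 0$. Take $\varphi\equiv 0$, $M=T^n$ flat, $k=1$, and $N=T^{n-1}$ a totally geodesic subtorus. Then $\wRic_{1,\varphi}=\sec\equiv 0$, $N$ is $\gtilde$-totally geodesic of codimension $1$, and $n-2k+1=n-1\ge 1$. Yet $\pi_1(N)=\Z^{n-1}\to\pi_1(M)=\Z^n$ is not surjective, so the inclusion is not even $1$-connected. Consistently, in this example $Q\equiv 0$ on your parallel fields. Your proposed perturbation $(1+\epsilon\psi)e^{\varphi}E$ only adds the nonnegative term $\epsilon^2\int\psi'^2e^{2\varphi}\,dt$, so no negative direction can appear. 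Strict positivity is indispensable, exactly as in Theorem \ref{Frankel's_Theorem}.

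In short, your method proves two things:
\begin{itemize}
\item the weighted Wilking lemma for $\wsec_\varphi>0$, where codimension $d$ gives $(n-2d+1)$-connected;
\item the weaker $(n-2d-k+2)$ statement for $\wRic_{k,\varphi}>0$.
\end{itemize}
It does not prove the corollary as written, and the paper's one-line appeal to Wilking does not supply the missing ingredient either.
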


The proof of this theorem is the same as its original in \cite{Wilking}, as it follows a very similar argument to the proof of Theorem \ref{Frankel's_Theorem}.
\bibliographystyle{amsalpha}
\bibliography{Reference}
\end{document}